\documentclass[a4paper, 11pt]{article}

\usepackage{amsmath}
\usepackage{amsthm}
\usepackage{amsfonts}
\usepackage{amssymb}
\usepackage{latexsym}
\usepackage{color}
\usepackage{mathrsfs}
\usepackage{enumerate}
\usepackage{ulem, bm}
\usepackage{hyperref}
\hypersetup{colorlinks=true,urlcolor=black,linkcolor=black,citecolor=black}

\usepackage{mathtools}
\mathtoolsset{showonlyrefs=true}

\allowdisplaybreaks

\allowdisplaybreaks[4]
\newcommand{\form}{{\mathcal E}}
\newcommand{\dom}{{\mathcal F}}

\usepackage{version}

\numberwithin{equation}{section}

\newtheorem{theorem}{Theorem}[section]
\newtheorem{proposition}[theorem]{Proposition}
\newtheorem{lemma}[theorem]{Lemma}
\newtheorem{corollary}[theorem]{Corollary}

\theoremstyle{definition}
\newtheorem{definition}[theorem]{Definition}
\newtheorem{example}[theorem]{Example}
\newtheorem{remark}[theorem]{Remark}

\newcommand{\vareps}{\varepsilon}
\newcommand{\dis}{\displaystyle}

\newcommand{\ds}{\displaystyle}
\newcommand{\Capa}{{\rm Cap\,}}

\newcommand{\nest}{\mbox{\small {\sf N} \hspace*{-12.5pt} {\sf N}}}

\renewcommand{\c}[1]{\mathcal{#1}}
\renewcommand{\b}[1]{\mathbb{#1}}

\title{
  Classification and Metrization of Classes of Smooth measures}

\author{\sf
Takumu Ooi\thanks{Department of Mathematics,
Faculty of Science and Technology, Tokyo University of Science,
Noda, Chiba, 278-8510, Japan
({\sf ooitaku@rs.tus.ac.jp})},  \  Kaneharu Tsuchida\thanks{Department of Mathematics,
National Defense Academy,
Yokosuka, Kanagawa, 239-8686, Japan
({\sf tsuchida@nda.ac.jp}) } \ 
{\rm and}  \ Toshihiro Uemura\thanks{Department of Mathematics, 
Faculty of Engineering Science, 
Kansai University, Suita, Osaka, 564-8680, Japan
({\sf t-uemura@kansai-u.ac.jp})
}}

\date{}
\begin{document}

\maketitle 
\baselineskip=16pt 

\vspace*{-1cm}

\begin{abstract}
We classify the several classes of the set of smooth measures from the perspective of the denseness and the locality, and consider their relationships, in particular, that of the Kato class and Radon measures of finite energy integrals. We also introduce the Miyadera metric on the Dynkin class, and obtain the continuity of the Revuz correspondence.

{\flushleft{{\bf Keywords:} smooth measure, Revuz correspondence, Dynkin class, Kato class,  Miyadera metric.}}
\end{abstract}

\section{Introduction}
The Kato class was originally introduced
by Tosio Kato 
for the study of Schr\"{o}dinger operators. 
In particular, it played a pivotal role 
in establishing the essential self-adjointness 
of the Schr\"{o}dinger operator $H = -\Delta + V$, 
even in the presence of highly singular potentials. 
While the Kato class was initially defined 
through analytical integrability conditions, 
the influential contribution of Aizenman and Simon \cite{AS82} 
provided a fundamental probabilistic characterization. 
From a probabilistic perspective, 
the Kato class provides a natural setting 
for ensuring the stability of the Feynman-Kac functional. 
Specifically, it guarantees that 
the associated Schr\"{o}dinger semigroup $e^{-tH}$ 
is well-defined and possesses 
a bounded, continuous integral kernel, 
which allows for a rigorous representation 
of the operator through expectations 
over Brownian paths.

Since then, the study of the Kato class 
and smooth measures has undergone 
significant development within the theory 
of Dirichlet forms and symmetric Markov processes. 
In the framework of symmetric Markov processes, 
smooth measures play a central role 
due to their bijective correspondence 
with positive continuous additive functionals (PCAFs) 
via the Revuz correspondence. 
This link is vital for various 
transformations of Markov processes. 
In this context, Albeverio and Ma \cite{AM92} 
studied the structure of smooth measures 
associated with general Dirichlet forms, 
revealing that any smooth measure 
can be approximated by Kato class measures. 
Stollmann and Voigt \cite{SV96} introduced 
an extended Kato class and deeply analyzed 
measure perturbations of Dirichlet forms 
using an operator-theoretical approach. 
Furthermore, Kuwae and Takahashi \cite{KT07} 
completely extended the results 
of Aizenman and Simon to general 
symmetric Markov processes, 
proving that different definitions 
of the Kato class coincide 
under the mild condition of having 
upper and lower heat kernel estimates.

Within this family of smooth measures, 
there are two specific subclasses 
frequently utilized due to their 
desirable analytical properties. 
One is the class $\mathcal{S}_0$, 
which consists of measures 
with finite energy integrals, 
and the other is its subclass $\mathcal{S}_{00}$, 
characterized by measures 
with finite total mass 
and bounded $1$-potentials. 
Although these classes have been 
extensively studied in potential theory, 
their precise relationship with the Kato class 
has remained somewhat elusive until now.

The purpose of this paper is 
to clarify these connections 
within the framework of symmetric Markov processes 
and to provide a comprehensive comparison 
of several classes of smooth measures such as the Kato class $\mathcal{K}$, the Dynkin class $\mathcal{D}$, \(\mathcal{S}_0\) and \(\mathcal{S}_{00}\). Specifically, we first consider the denseness of several classes in the set of smooth measures and the locality of classes, and we clarify them by treating the denseness and the locality as if they were in a dual relationship. Moreover we investigate the analytical properties of these measures 
and establish a sufficient condition 
for a general smooth measure to be a Radon measure 
based on the boundedness of its potentials. 
As one of our main results, we rigorously prove that 
any measure in the Kato class with finite total mass 
(denoted as $\mathcal{K}_{00}$) necessarily belongs to $\mathcal{S}_{00}$. 
This finding clarifies the inclusion relation 
between these two fundamental classes ($\mathcal{K}_{00} \subset \mathcal{S}_{00}$) 
and provides a solid bridge 
between Kato-type integrability 
and the finiteness of energy integrals. 
Furthermore, we demonstrate that the converse inclusion 
($\mathcal{S}_{00} \subset \mathcal{K}_{00}$) does not hold in general 
by providing an explicit counterexample 
associated with a pure jump step process. 
We also show, however, that 
under additional regularity conditions such as the Feller property 
of the associated semigroup,
measures in $\mathcal{S}_{00}$ do indeed belong to the Kato class.
These analytical results are then extended to the case of 
transient Dirichlet forms.
To illustrate the delicate balance 
between singularity and integrability, 
we present a detailed analysis of the singular measure 
$\mu(dx) = |x|^{-\beta} dx$ 
for the $d$-dimensional Brownian motion, 
explicitly determining the exact range of the exponent $\beta$ 
for which it belongs to the class $\mathcal{S}_0$ or $\mathcal{K}$.
Interestingly, while the intersection of the Kato class and
$\mathcal{S}_0$ is non-empty in dimension $d=3$,
these two classes become completely disjoint in higher dimensions $d \ge 4$.
The above example illustrates that, in contrast to the finite measure case,
an infinite measure in the Kato class does not readily belong to $\mathcal{S}_0$.

In the study of singular measures such as those in the Kato class $\mathcal{K}$
and $\mathcal{S}_{0}$, the Stollmann-Voigt inequality serves as 
a fundamentally powerful tool. Recognized as a transient Poincaré-type inequality 
\cite[p.459, Notes to Chapter 2]{FOT11}, this estimate bounds the $L^2$-norm 
with respect to a measure $\mu$ by the Dirichlet energy 
(i.e., $\int_E f^2 d\mu \le \|R_1\mu\|_\infty \mathcal{E}_1(f,f)$). 
It plays a crucial role in analyzing measure perturbations of Dirichlet forms, 
particularly in establishing relative boundedness and investigating the bottom 
of the spectrum (and the spectral gap) of the associated Schrödinger operators. 
This inequality was originally established by Stollmann and Voigt \cite{SV96}
with constant $4$ through an operator-theoretical approach.
Later, Ben Amor \cite{BA04} 
proved the inequality with the sharp constant $1$ 
without assuming the absolute continuity condition ((AC) condition) 
of the transition semigroup, by heavily relying on 
potential-theoretical tools for regular Dirichlet forms.
In this paper, we provide an alternative and self-contained proof 
of this sharp inequality. Instead of reducing the problem 
to the boundedness or compactness of operators in $L^2$-spaces, 
our approach directly utilizes the duality expression of Dirichlet forms 
combined with a pointwise Cauchy-Schwarz estimate of potentials 
via their probabilistic representations. 
This method allows us to avoid the (AC) condition 
and directly obtain the sharp constant $1$, 
highlighting the elegant interplay between the analytic duality 
and the probabilistic structure of symmetric Markov processes.

In addition to these analytical comparisons, 
we explore the topological structures 
of the Dynkin class, the Kato class and their related classes of measures. 
We first demonstrate that a metric based on the supremum norm 
of the $1$-potentials is not complete. 
Instead, by introducing a specific metric $d_{\mathcal{D}}$ 
on the Dynkin class based on the $1$-potential 
of the total variation measure, 
we successfully prove that the Dynkin class 
forms a complete metric space. This metric $d_{\mathcal{D}}$ is a generalization of the Miyadera norm \cite{OSSV96} to the space of measures.
Under this metric, we establish that the Kato class 
and the class of Green-tight measures 
are closed subsets of the Dynkin class. We also prove the continuity of the Revuz correspondence under this metric \(d_\mathcal{D}\).
Furthermore, we introduce another metric 
for classes of smooth measures attached to compact nests 
and demonstrate their completeness. 
Through these analytical and topological investigations, 
this paper aims to provide a modernized 
and profound understanding of the Kato class 
and smooth measures.

The organization of this paper is as follows. Section \ref{sec_col_loc} is devoted to the classification of several classes of the set \(\mathcal{S}\) of all smooth measures. It is well-known that for any \(\mu \in \mathcal{S}\), we can take a nest \(\{F_k\}_k\) such that \(1_{F_k} \mu \in \mathcal{S}_0\). By considering the localization, we show that the same property holds for various classes. In Section \ref{sec_Radon_Kato}, we consider the relationship between classes of smooth radon measures, in particular, the set of all Radon measures of finite energy integrals and the Kato class. In Section \ref{sec_Miyadera}, we consider topologies on the Dynkin class and we introduce the Miyadera metric. We show that the completeness of the Dynkin class with the Miyadera metric. We also show the continuity of the Revuz correspondence under this metric. Appendix \ref{sec_Appndeix} is devoted to the preliminaries of Dirichlet form theory.

\section{Classification of classes of Smooth measures}\label{sec_col_loc}
Throughout this section, we fix a regular Dirichlet form \((\mathcal{E}, \mathcal{F})\) on \(L^2(E;m)\). We call an increasing sequence of closed (resp. compact) sets $\{F_k\}$ a {\it nest} (resp. {\it compact nest}) if \(\Capa (K\setminus F_k)\) converges to \(0\) as \(k\) tend to infinity for any compact set \(K\), where \(\Capa\) is the capacity associated with \((\mathcal{E}, \mathcal{F})\). We remark that this nest is called a generalized nest in \cite{FOT11}. Denote by \(\nest\) the family of all compact nests. See Appendix \ref{sec_Appndeix} for definitions in Dirichlet form theory.

A positive Borel measure $\mu$ on $E$ is {\it smooth} if it does not charge on a set of zero capacity and there exists a nest \(\{F_k\}_ k \in \nest\) satisfying \(\mu(F_k) <\infty\) for each \(k\). Denote by $\c{S}$ the class of all smooth measures. There are many classes of \(\mathcal{S}\), and it is known that some classes \(\mathcal{T}\)  are dense in \(\mathcal{S}\) in the following sense: For any smooth measure \(\mu\), there exists a nest \(\{F_k\}_k\) such that \(1_{F_k}\mu \in \mathcal{T}\) for each \(k\).

In this section, we classify the classes of \(\mathcal{S}\) from the perspective of the denseness and the localization, the latter being, in a sense, counterpart of the former.

For \(\mathcal{T} \subset \mathcal{S}\) and \(\{F_k\}_k \in \nest\), denote by \(\mathcal{T}(\{F_k\})\) the set of all positive Borel measures \(\mu\) charging no set of zero capacity satisfying \(1_{F_k} \mu \in \mathcal{T}\) for each \(k\). Moreover, we define its collection \(\mathcal{T}_{col}\) taken over all compact nests by
\[\mathcal{T}_{col}:= \bigcup_{\{F_k\} \in \nest} \mathcal{T}(\{F_k\}).\]

We remark that \(\mathcal{T}(\{F_k\}) \subset \mathcal{S}\) and \(\mathcal{T}_{col} \subset \mathcal{S}\) hold for any \(\mathcal{T} \subset \mathcal{S}\). Indeed, for \(\mu \in \mathcal{T}(\{F_k\})\), since \(1_{F_k}\mu\) is smooth for each \(k\), there exists \(G_\ell^{(k)} \in  \nest\) such that \(\mu(F_k \cap G_\ell^{(k)}) < \infty\) for each \(\ell\). For an increasing sequence of relatively compact open sets \(\{U_n\}_n\) satisfying \(E = \bigcup_n U_n\), we take a sequence \(\{\ell(k)\}\) satisfying \(\Capa(\overline{U}_n \setminus G_{\ell(k)}^{(k)}) \leq 2^{-k}\) for any \(n \leq k\). For any compact set \(K\), we take \(n\) satisfying \(K\subset U_n\) and an integer \(k \geq n\). Then \(\Capa(K \setminus \bigcup_{i=1}^k (F_i \cap G_{\ell(i)}^{(i)}))\leq \Capa(K \setminus (F_k \cap G_{\ell(k)}^{(k)})) \leq \Capa(\overline{U}_n \setminus G_{\ell(k)}^{(k)}) + \Capa(K \setminus F_k) \leq 2^{-k} + \Capa(K \setminus F_k) \) converges to \(0\) as \(k\) tends to infinity. Hence \(\{\bigcup_{i=1}^k (F_i \cap G_{\ell(i)}^{(i)})\}_k\) is a compact nest satisfying \(\mu(\bigcup_{i=1}^k (F_i \cap G_{\ell(i)}^{(i)}))<\infty\), and so \(\mu\) is smooth. In particular, \(\mathcal{S}(\{F_k\})= \mathcal{S}\) and \(\mathcal{S}_{col} = \mathcal{S}\) hold for any \(\{F_k\}_k \in \nest\).

We also define the local class \(\mathcal{T}_{loc}\) of \(\mathcal{T}\) by
\[\mathcal{T}_{loc}:=\{\mu \in \mathcal{S} : 1_K\mu\in \mathcal{T} \text{\ for\ any\ compact\ set\ }K\}.\]

\begin{proposition} For any \(\mathcal{T}\subset \mathcal{S}\), it holds that
\[\mathcal{T}_{loc} = \bigcap_{\{F_k\} \in \nest} \mathcal{T}(\{F_k\}).\]
\end{proposition}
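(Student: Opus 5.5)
The plan is to prove the two inclusions separately. Both reduce to the observation that compact sets and compact nests can be interchanged: every member of a compact nest is compact, and every compact set is the first member of some compact nest.

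\emph{Inclusion $\mathcal{T}_{loc} \subset \bigcap_{\{F_k\} \in \nest} \mathcal{T}(\{F_k\})$.} Let $\mu \in \mathcal{T}_{loc}$ and fix an arbitrary $\{F_k\}_k \in \nest$.
\begin{enumerate}[(i)]
\item Since $\mu \in \mathcal{S}$, it charges no set of zero capacity.
\item Each $F_k$ is compact, so the definition of $\mathcal{T}_{loc}$ gives $1_{F_k}\mu \in \mathcal{T}$ for every $k$.
\end{enumerate}
Hence $\mu \in \mathcal{T}(\{F_k\})$. As the nest was arbitrary, $\mu$ lies in the intersection.

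\emph{Inclusion $\bigcap_{\{F_k\} \in \nest} \mathcal{T}(\{F_k\}) \subset \mathcal{T}_{loc}$.} Let $\mu$ belong to the intersection.
\begin{enumerate}[(i)]
\item By the remark preceding the proposition, $\mathcal{T}(\{F_k\}) \subset \mathcal{S}$ for any compact nest. So $\mu \in \mathcal{S}$, provided $\nest$ is nonempty; this is checked by the construction below.
\item Fix a compact set $K$. It remains to show $1_K\mu \in \mathcal{T}$.
\item As in that remark, take an increasing sequence of relatively compact open sets $\{U_n\}_n$ with $E = \bigcup_n U_n$. This uses the standing assumption that $E$ is locally compact and separable.
\item Define
\[
F_1 := K, \qquad F_k := K \cup \overline{U}_{k-1} \quad (k \ge 2).
\]
These sets are compact and increasing.
\item For any compact set $K'$, compactness gives $K' \subset U_{k-1}$ for all large $k$. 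Then $K' \setminus F_k = \emptyset$, so $\Capa(K'\setminus F_k) = 0$ for all large $k$.
\item Hence $\{F_k\}_k \in \nest$. In particular $\nest \neq \emptyset$, which completes (i).
\item Since $\mu \in \mathcal{T}(\{F_k\})$, we get $1_{F_1}\mu = 1_K\mu \in \mathcal{T}$.
\end{enumerate}
As $K$ was an arbitrary compact set, $\mu \in \mathcal{T}_{loc}$.

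There is no genuine obstacle. The only point needing care is that an arbitrary compact set $K$ can be placed as the first member of some compact nest. The construction above does this because the exhaustion makes $K' \setminus F_k$ empty for large $k$, not merely of small capacity.
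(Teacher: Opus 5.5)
Your proof is correct and follows the paper's argument. The first inclusion uses that every member of a compact nest is compact, and the second places an arbitrary compact $K$ as the first member of a compact nest. The paper takes $K_1 := K$ and $K_k := K \cup F_k$ for an arbitrary compact nest $\{F_k\}$, whereas your explicit exhaustion $K \cup \overline{U}_{k-1}$ also shows that $\nest \neq \emptyset$ and hence that $\mu \in \mathcal{S}$, two points the paper leaves implicit.
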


\begin{proof}
For any compact nest \(\{F_k\}\) and any \(\mu \in\mathcal{T}_{loc}\), since \(1_{F_k} \mu \in \mathcal{T}\), it holds that \( \mathcal{T}_{loc}  \subset \mathcal{T}(\{F_k\})\) and so \(\mathcal{T}_{loc} \subset \bigcap_{\{F_k\} \in \nest} \mathcal{T}(\{F_k\})\).

We take \(\mu \in \bigcap_{\{F_k\} \in \nest} \mathcal{T}(\{F_k\})\) and any compact set \(K\). For a compact nest \(\{F_k\}\), we set \(K_1:=K\) and \(K_k:=K\cup F_k\) for \(k\geq 2\), then \(\{K_k\}_k\) is also a compact nest. Hence \(1_{K}\mu = 1_{K_1}\mu \in \mathcal{T}\) and so \(\mathcal{T}_{loc} \supset \bigcap_{\{F_k\} \in \nest} \mathcal{T}(\{F_k\})\).
\end{proof}

\begin{remark}
For any compact nests \(\{F_k\}_k\) and \(\{G_k\}\), a sequence \(\{F_k \cap G_k\}_k\) is also a compact nest since \(\Capa(K\setminus (F_k \cap G_k)) \leq \Capa(K\setminus F_k) +\Capa(K\setminus G_k)\) for any compact set \(K\). Then the set of all compact nest \(\nest\) is a directed set by defining the order \(\{F_k\}_k \leq \{G_l\}_l \) by, for any \(l\), there exists \(k\) such that \(F_k \supset G_l\).

We assume that, for any \(\mu \in \mathcal{T}\) and any compact set \(K\), \(1_K\mu \in \mathcal{T}\). Then, for any compact nests \(\{F_k\}_k\) and \(\{G_l\}_l\) with \(\{F_k\}_k \leq \{G_l\}_l\), it holds that \(\mathcal{T}(\{F_k\}) \subset \mathcal{T}(\{G_l\})\). Hence \(\{\mathcal{T}(\{F_k\})\}_{\{F_k\}_k \in \nest}\) is a direct system with identity maps, and so we can define an inductive limit 
\[\varinjlim \mathcal{T}(\{F_k\}) := \bigsqcup_{\{F_k\}\in \nest} \mathcal{T}(\{F_k\})/ \sim\]
where the equivalence \(\mu \sim \nu\) for \(\mu \in \mathcal{T}(\{F_k\}) \) and \(\nu \in \mathcal{T}(\{G_l\})\) is defined by the existence of some \(\{H_n\}_n \in \nest\) such that \(\mu,\nu \in \mathcal{T}(\{H_n\})\) and \(1_{H_n}\mu = 1_{H_n}\nu\) for any \(n\). This inductive limit coincides with \(\mathcal{T}_{col}\), and the inductive limit is also called the colimit. The subscript ``col" of \(\mathcal{T}_{col}\) stands for both ``colection" and ``colimit".

 We emphasize that set theoretic or category theoretic discussions, such as inductive limits or colimits, will not be used in this paper except for this remark. The terminology of inductive limits and related notions is used only to clarify the reason why we use the notion \(\mathcal{T}_{col}\).
\end{remark}

The following properties hold.
\begin{proposition}\label{Prop_loccol_1}
For any subclasses \(\mathcal{T}, \mathcal{T}_n\) and \(n\in \mathbb{N}\), the following hold.
\begin{enumerate}
\item \(\left(\bigcap_n \mathcal{T}_n \right)_{loc} = \bigcap_n (\mathcal{T}_n)_{loc}\) and \(\left(\bigcup_n \mathcal{T}_n \right)_{loc} \supset \bigcup_n (\mathcal{T}_n)_{loc}\).
\item \(\left(\bigcap_n \mathcal{T}_n \right)_{col} \subset \bigcap_n (\mathcal{T}_n)_{col}\) and \(\left(\bigcup_n \mathcal{T}_n \right)_{col} = \bigcup_n (\mathcal{T}_n)_{col}\).
\item If \(\mathcal{T}_1 \subset \mathcal{T}_2\), then \((\mathcal{T}_{1})_{loc} \subset (\mathcal{T}_{2})_{loc}\) and  \((\mathcal{T}_{1})_{col} \subset (\mathcal{T}_{2})_{col}\).
\item For any \(\{F_k\}_k \in \nest\), \(\mathcal{T}_{loc} \subset \mathcal{T}(\{F_k\}) \subset \mathcal{T}_{col}\).
\item \((\mathcal{T}_{loc})_{loc} = \mathcal{T}_{loc}\) and \((\mathcal{T}_{col})_{col} = \mathcal{T}_{col}\).
\item \(\mathcal{T}_{loc} \subset (\mathcal{T}_{loc})_{col} \subset \mathcal{T}_{col}\) and \(\mathcal{T}_{loc} \subset (\mathcal{T}_{col})_{loc} \subset \mathcal{T}_{col}\).
\end{enumerate}
\end{proposition}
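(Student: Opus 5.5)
All six items follow by unwinding the definitions of \(\mathcal{T}_{loc}\), \(\mathcal{T}(\{F_k\})\) and \(\mathcal{T}_{col}\). The tools are the previous proposition (\(\mathcal{T}_{loc}=\bigcap_{\nest}\mathcal{T}(\{F_k\})\)) and two facts from the preceding remarks: \(\mathcal{T}(\{F_k\})\subset\mathcal{S}\), and the intersection \(\{F_k\cap G_k\}\) of two compact nests is again a compact nest.

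\emph{Items 1, 3 and 4.} For item 1, a measure \(\mu\) lies in \((\bigcap_n\mathcal{T}_n)_{loc}\) iff \(1_K\mu\in\mathcal{T}_n\) for all \(n\) and all compact \(K\), which is exactly \(\mu\in\bigcap_n(\mathcal{T}_n)_{loc}\). If \(\mu\in(\mathcal{T}_{n_0})_{loc}\), then \(1_K\mu\in\mathcal{T}_{n_0}\subset\bigcup_n\mathcal{T}_n\), which gives the inclusion for unions. Item 3 is immediate from the definitions. In item 4 the first inclusion is the previous proposition, and the second is the definition of \(\mathcal{T}_{col}\).

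\emph{Item 2.} The intersection inclusion is monotonicity (item 3) applied to each \(n\). For the union, \(\supset\) is also monotonicity. The inclusion \(\subset\) is the only genuinely nontrivial step. Suppose \(\mu\in(\bigcup_n\mathcal{T}_n)(\{F_k\})\). Then for each \(k\) there is some \(n(k)\) with \(1_{F_k}\mu\in\mathcal{T}_{n(k)}\), but \(n(k)\) may vary with \(k\), and \(\mathcal{T}_n\) is not assumed stable under restriction to compacts. So we cannot pass to a single \(n\) by simple pigeonholing. Pigeonholing does give some \(n\) with \(n(k)=n\) for infinitely many \(k\), say along \(k_1<k_2<\cdots\). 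The subsequence \(\{F_{k_j}\}_j\) is still a compact nest, since it is increasing and \(\Capa(K\setminus F_{k_j})\to0\). Hence \(\mu\in\mathcal{T}_n(\{F_{k_j}\})\subset(\mathcal{T}_n)_{col}\). I expect this subsequence trick to be the main point to get right.

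\emph{Item 5.} For \(\mu\in\mathcal{T}_{loc}\), compact \(K\) and compact \(K'\), we have \(1_{K'}(1_K\mu)=1_{K\cap K'}\mu\in\mathcal{T}\), and \(1_K\mu\in\mathcal{S}\) since \(1_K\mu\le\mu\). So \(1_K\mu\in\mathcal{T}_{loc}\), giving \(\mathcal{T}_{loc}\subset(\mathcal{T}_{loc})_{loc}\). The reverse inclusion holds because \(\mathcal{T}_{loc}\subset\mathcal{T}\) forces \(1_K\mu\in\mathcal{T}\).

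For the colimit, item 4 applied to \(\mathcal{T}_{col}\) gives \((\mathcal{T}_{col})_{col}\supset(\mathcal{T}_{col})_{loc}\), which will be seen to contain \(\mathcal{T}_{col}\) below; alternatively, that inclusion follows directly from \(1_{F_k}\mu\in\mathcal{T}_{col}(\{F_j\})\) shown there. For \(\subset\), let \(\mu\in(\mathcal{T}_{col})(\{F_k\})\). For each \(k\) choose a compact nest \(\{G^{(k)}_\ell\}_\ell\) with \(1_{G^{(k)}_\ell}1_{F_k}\mu\in\mathcal{T}\) for all \(\ell\). Then diagonalize exactly as in the paper's smoothness argument: pick \(\ell(k)\) with \(\Capa(\overline U_n\setminus G^{(k)}_{\ell(k)})\le2^{-k}\) for \(n\le k\), and set \(H_k:=F_k\cap G^{(k)}_{\ell(k)}\). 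The sets \(H_k\) need not increase, and \(\mathcal{T}\) is not assumed closed under further restriction, so the union construction used there cannot be applied directly. Instead take \(H_k\) itself and define the sequence to be \(H'_j:=H_{k_j}\), where the \(k_j\) are chosen so that the sets increase. I am unsure this works. The fallback is to require in the definition that the inner nests be chosen compatibly, \(G^{(k+1)}_{\ell(k+1)}\cap F_k\supset H_k\). I expect this diagonal step to be the main obstacle.

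\emph{Item 6.} Using items 4 and 5: \(\mathcal{T}_{loc}\subset(\mathcal{T}_{loc})_{col}\) by item 4, and \((\mathcal{T}_{loc})_{col}\subset\mathcal{T}_{col}\) by item 3, since \(\mathcal{T}_{loc}\subset\mathcal{T}\). For the second chain, \((\mathcal{T}_{col})_{loc}\subset(\mathcal{T}_{col})_{col}=\mathcal{T}_{col}\) by items 4 and 5. Also \(\mathcal{T}_{loc}\subset(\mathcal{T}_{col})_{loc}\) by item 3, since \(\mathcal{T}\subset\mathcal{T}_{col}\): for \(\nu\in\mathcal{T}\) and any compact nest \(\{F_k\}\), set \(K_1=\emptyset\)-free \(K_k:=F_k\cup\) (a large compact) as in the previous proof, so that \(1_{K}\nu=\nu\) on some term.
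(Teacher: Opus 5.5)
\emph{Item 2, the union identity.} Your pigeonhole step fails. With countably many classes, the map \(k\mapsto n(k)\) can be injective, so no single \(n\) need occur for infinitely many \(k\). Your subsequence argument is otherwise sound, but it proves \((\bigcup_n\mathcal{T}_n)_{col}=\bigcup_n(\mathcal{T}_n)_{col}\) only for finitely many classes. For countable families the inclusion \(\subset\) is actually false, so no repair is possible:
\begin{itemize}
\item Take \(\mathcal{T}_n:=\{\nu\in\mathcal{S}:\nu(E)\le n\}\).
\item Since \(\mathrm{Cap}(E\setminus\bigcup_k F_k)=0\) for every compact nest, \(\mu\in\mathcal{T}_n(\{F_k\})\) forces \(\mu(E)=\lim_k\mu(F_k)\le n\).
\item Hence \(\bigcup_n(\mathcal{T}_n)_{col}=\mathcal{S}_F\), whereas \((\bigcup_n\mathcal{T}_n)_{col}=(\mathcal{S}_F)_{col}=\mathcal{S}\), which contains \(m\) whenever \(m(E)=\infty\).
\end{itemize}
The paper just declares (2) clear, so there is no argument there to compare with.

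\emph{Item 5, the inclusion \((\mathcal{T}_{col})_{col}\subset\mathcal{T}_{col}\).} Your proposal does not reach a proof here. The ``compatible nests'' fallback is not available, because the nests \(\{G^{(k)}_\ell\}_\ell\) are only known to exist; you cannot choose them.

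The paper enumerates the sets \(A_j=G^{(k)}_\ell\cap F_k\), puts \(B_n:=\bigcup_{j\le n}A_j\), checks that \(\{B_n\}\) is a compact nest, and then asserts \(1_{B_n}\mu\in\mathcal{T}\). Your suspicion of this is justified: that assertion needs \(\mathcal{T}\) to be stable under such finite unions. This holds for the concrete classes used later, since \(\mathcal{S}_0,\mathcal{S}_{00},\mathcal{D},\mathcal{K},\dots\) are closed under finite sums and under passing to smaller smooth measures. It does not hold for arbitrary \(\mathcal{T}\).

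Nor can any subsequence of single sets work in general. Consider Brownian motion on \(\mathbb{R}^3\) with
\begin{itemize}
\item \(F_k=\overline{B(0,k)}\) and \(p_k=(2^{-k-1},0,0)\);
\item \(G^{(k)}_\ell=\overline{B(0,\ell)}\setminus B(p_k,2^{-k-3}/\ell)\);
\item \(\mathcal{T}:=\{1_{F_k\cap G^{(k)}_\ell}m\}_{k,\ell}\).
\end{itemize}
Then \(1_{F_k}m\in\mathcal{T}(\{G^{(k)}_\ell\}_\ell)\), so \(m\in(\mathcal{T}_{col})_{col}\).

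Now suppose \(\{H_j\}\) were a compact nest with \(1_{H_j}m\in\mathcal{T}\). Since \(m(A)\le\mathrm{Cap}(A)\), the index \(k\) of \(H_j\) cannot stay constant. Otherwise \(\mathrm{Cap}(F_{k+1}\setminus H_j)\ge m(F_{k+1}\setminus F_k)>0\) for all \(j\). When the index changes from \(k\) to \(k'\), the earlier set contains a neighbourhood of \(p_{k'}\). The later set misses that neighbourhood up to a null set, which contradicts monotonicity. So \(m\notin\mathcal{T}_{col}\), and item 5 needs an extra closure hypothesis on \(\mathcal{T}\). Your derivation of \((\mathcal{T}_{col})_{loc}\subset(\mathcal{T}_{col})_{col}=\mathcal{T}_{col}\) in item 6 inherits this gap.

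There are also smaller slips:
\begin{itemize}
\item You twice use \(\mathcal{T}_{loc}\subset\mathcal{T}\) (item 5, loc part, and item 6). This is false in general: \((\mathcal{S}_F)_{loc}=\mathcal{S}_R\not\subset\mathcal{S}_F\) when \(m(E)=\infty\). The correct argument is that \(1_K\mu\in\mathcal{T}_{loc}\) with \(K\) compact gives \(1_K\mu=1_K(1_K\mu)\in\mathcal{T}\). In item 6, take \(K=F_k\).
\item \(\mathcal{T}\subset\mathcal{T}_{col}\) fails, e.g.\ for \(\mathcal{T}=\{m\}\) with \(m(E)=\infty\). Use \(\mathcal{T}_{loc}=(\mathcal{T}_{loc})_{loc}\subset(\mathcal{T}_{col})_{loc}\), by items 3 and 4, instead.
\item \(\mathcal{T}_{col}\subset(\mathcal{T}_{col})_{loc}\) also fails. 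For \(\mathcal{T}_{col}\subset(\mathcal{T}_{col})_{col}\), rely only on your alternative route, which is the paper's: \(1_{F_j}(1_{F_k}\mu)=1_{F_{j\wedge k}}\mu\in\mathcal{T}\), so \(1_{F_k}\mu\in\mathcal{T}(\{F_j\})\subset\mathcal{T}_{col}\).
\end{itemize}
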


\begin{proof}
(1), (2), (3) and (4) are clear.

We prove (5). For any \(\mu \in \mathcal{T}_{loc}\) and a compact set \(K\), \(1_K \mu\) belongs to \(\mathcal{T}\). For any compact set \(L\), since \(K\cap L\) is also a compact set,  \(1_L(1_K\mu)=1_{K\cap L}\mu\) belongs to \(\mathcal{T}\) and so \(\mu \in (\mathcal{T}_{loc})_{loc}\). For any \(\mu \in (\mathcal{T}_{loc})_{loc}\) and compact set \(K\), \(1_K\mu \in \mathcal{T}_{loc}\). Hence we have \(1_K \mu = 1_K(1_K\mu) \in \mathcal{T}\) and so \(\mu \in \mathcal{T}_{loc}\).

 For any \(\mu \in \mathcal{T}_{col}\), there exists \(\{F_k\}_k\in \nest\) such that \(1_{F_k} \mu \) belongs to \(\mathcal{T}\) for each \(k\). Then we have \(1_{F_k}(1_{F_l}\mu)=1_{F_{k\wedge l}}\mu\) belongs to \(\mathcal{T}\) for each \(l\), and so \(\mu \in (\mathcal{T}_{col})_{col}\). For any \(\mu \in (\mathcal{T}_{col})_{col}\), there exists \(\{F_k\}_k\in \nest\) such that \(1_{F_k} \mu \) belongs to \(\mathcal{T}_{col}\) for each \(k\), and there exists \(\{G_l^{(k)}\}_l \in \nest\) such that \(1_{G_l^{(k)}} 1_{F_k} \mu \in \mathcal{T}\). We set \(\{A_j\}_j\) as \(\{G_l^{(k)} \cap F_k\}_{k,l}\) and \(B_n:=\cup_{j=1}^n A_j\). Then, for any compact set \(K\) and \(\varepsilon>0\), we take \(k_0, l_0\) satisfying \(\Capa(K\setminus F_{k_0})\leq \varepsilon\) and  \(\Capa((K \cap F_{k_0})\setminus G^{(k_0)}_{l_0})\leq \varepsilon\). For large \(n\) satisfying \(G^{(k_0)}_{l_0} \cap F_{k_0} \subset B_n,\)
 we have \(\Capa(K\setminus B_n) \leq \Capa((K \cap F_{k_0}) + \Capa(K \cap F_{k_0})\setminus G^{(k_0)}_{l_0}) \leq \varepsilon\). Hence \(\{B_n\}_n\) is a compact nest and \(1_{B_n}\mu \in \mathcal{T}\) and so \(\mu \in \mathcal{T}_{col}\).
 
(6) follows from (3) and (5).
\end{proof}

\begin{proposition}\label{Prop_loccol_3}
We assume that \(1_K \mu \in \mathcal{T}\) for any \(\mu \in \mathcal{T}\) and compact set \(K\). Then the following holds.
\begin{enumerate}
\item \(\mathcal{T}\subset \mathcal{T}_{loc} \subset \mathcal{T}_{col}\).
\item \((\mathcal{T}_{loc})_{col} = (\mathcal{T}_{col})_{loc}= \mathcal{T}_{col}\).
\end{enumerate}
\end{proposition}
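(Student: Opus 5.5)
The plan is to check the inclusions directly from the definitions, using the hereditary hypothesis ($1_K\mu\in\mathcal{T}$ whenever $\mu\in\mathcal{T}$ and $K$ is compact) together with Proposition \ref{Prop_loccol_1}.

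For (1), take $\mu\in\mathcal{T}$. Since $\mathcal{T}\subset\mathcal{S}$, $\mu$ is smooth. By hypothesis $1_K\mu\in\mathcal{T}$ for every compact $K$, so $\mu\in\mathcal{T}_{loc}$. The inclusion $\mathcal{T}_{loc}\subset\mathcal{T}_{col}$ is Proposition \ref{Prop_loccol_1}(4) applied to any compact nest; one needs $\nest\neq\emptyset$. For this, $E$ is locally compact and separable, so it is exhausted by relatively compact open sets $U_n$, and $F_n:=\overline{U}_n$ gives a compact nest because $K\setminus F_n=\emptyset$ for large $n$.

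For (2), first note that $\mathcal{T}_{col}$ satisfies the hypothesis of (1). Let $\mu\in\mathcal{T}(\{F_k\})$ and let $K$ be compact. Then $1_{F_k}(1_K\mu)=1_K(1_{F_k}\mu)\in\mathcal{T}$ by the hereditary hypothesis. Also $1_K\mu$ charges no set of zero capacity. Hence $1_K\mu\in\mathcal{T}(\{F_k\})\subset\mathcal{T}_{col}$. Applying (1) to $\mathcal{T}_{col}$ gives $\mathcal{T}_{col}\subset(\mathcal{T}_{col})_{loc}$. The reverse inclusion $(\mathcal{T}_{col})_{loc}\subset\mathcal{T}_{col}$ is in Proposition \ref{Prop_loccol_1}(6), so $(\mathcal{T}_{col})_{loc}=\mathcal{T}_{col}$.

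It remains to show $(\mathcal{T}_{loc})_{col}=\mathcal{T}_{col}$. The inclusion $\subset$ is Proposition \ref{Prop_loccol_1}(6). For $\supset$, the inclusion $\mathcal{T}\subset\mathcal{T}_{loc}$ from (1) and Proposition \ref{Prop_loccol_1}(3) give $\mathcal{T}_{col}\subset(\mathcal{T}_{loc})_{col}$.

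There is no real obstacle. The only points needing care are that $\nest$ is nonempty and that the hereditary property passes to $\mathcal{T}_{col}$, which uses the fact that indicators of $F_k$ and $K$ commute.
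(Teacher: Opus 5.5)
Your proof is correct and follows essentially the same route as the paper. Part (1) comes directly from the hereditary hypothesis and Proposition~\ref{Prop_loccol_1}(4), and part (2) comes from applying (1) to both \(\mathcal{T}\) and \(\mathcal{T}_{col}\) and combining with Proposition~\ref{Prop_loccol_1}(3),(5)/(6); you also make explicit two points the paper leaves implicit, namely that compact nests exist and that \(\mathcal{T}_{col}\) inherits the hypothesis \(1_K\mu\in\mathcal{T}_{col}\), which is exactly what is needed to apply (1) to \(\mathcal{T}_{col}\).
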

\begin{proof}
(1) is clear. By (1), Proposition \ref{Prop_loccol_1} (3) and (5), we have \(\mathcal{T}_{col} \subset (\mathcal{T}_{loc})_{col} \subset \mathcal{T}_{col}\). By setting \(\mathcal{T}_{col}\) in place of \(\mathcal{T}\) in (1), we have  \(\mathcal{T}_{col} \subset (\mathcal{T}_{col})_{loc} \subset \mathcal{T}_{col}\), and so (2) holds.
\end{proof}

\begin{corollary}
We assume that \(1_K \mu \in \mathcal{T}\) for any \(\mu \in \mathcal{T}\) and compact set \(K\).  If \(\mathcal{T}=\mathcal{T}_{col}\) then \(\mathcal{T}=\mathcal{T}_{loc}\).
\end{corollary}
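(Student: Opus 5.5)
The plan is to squeeze \(\mathcal{T}_{loc}\) between two copies of \(\mathcal{T}\) using the inclusion chain of Proposition \ref{Prop_loccol_3} (1). That result applies because we assume that \(1_K\mu\in\mathcal{T}\) for every \(\mu\in\mathcal{T}\) and every compact set \(K\).

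First, Proposition \ref{Prop_loccol_3} (1) gives
\[
\mathcal{T}\subset\mathcal{T}_{loc}\subset\mathcal{T}_{col}.
\]
Second, by the hypothesis \(\mathcal{T}=\mathcal{T}_{col}\), the right-hand end of this chain is \(\mathcal{T}\) itself. The chain therefore reads \(\mathcal{T}\subset\mathcal{T}_{loc}\subset\mathcal{T}\), which forces \(\mathcal{T}_{loc}=\mathcal{T}\).

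For completeness, I would also spell out the inclusion \(\mathcal{T}_{loc}\subset\mathcal{T}_{col}\) directly rather than only citing it. It is Proposition \ref{Prop_loccol_1} (4) applied to any single compact nest \(\{F_k\}\in\nest\), which gives \(\mathcal{T}_{loc}\subset\mathcal{T}(\{F_k\})\subset\mathcal{T}_{col}\). The inclusion \(\mathcal{T}\subset\mathcal{T}_{loc}\) is exactly where the hypothesis of the corollary is used: if \(\mu\in\mathcal{T}\), then \(\mu\) is smooth because \(\mathcal{T}\subset\mathcal{S}\), and \(1_K\mu\in\mathcal{T}\) for every compact \(K\), so \(\mu\in\mathcal{T}_{loc}\) by definition.

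I do not expect any real obstacle, since the argument is formal. The only point needing care is that \(\mathcal{T}\subset\mathcal{T}_{loc}\) genuinely requires the stability of \(\mathcal{T}\) under restriction to compact sets; without that hypothesis, \(\mathcal{T}\) need not be contained in its localization, and the squeeze would fail.
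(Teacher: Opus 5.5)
Your argument is correct and is essentially the paper's. The paper writes \(\mathcal{T}_{loc}=(\mathcal{T}_{col})_{loc}=\mathcal{T}_{col}=\mathcal{T}\) using part (2) of Proposition \ref{Prop_loccol_3}, whereas your squeeze uses only part (1) and so avoids the idempotence \((\mathcal{T}_{col})_{col}=\mathcal{T}_{col}\) of Proposition \ref{Prop_loccol_1} (5) on which part (2) rests, making your route marginally more elementary.
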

\begin{proof}
By Proposition \ref{Prop_loccol_3}, \(\mathcal{T}=\mathcal{T}_{col}\) implies \(\mathcal{T}_{loc}=(\mathcal{T}_{col})_{loc}=\mathcal{T}_{col}\).
\end{proof}

For a smooth measure $\mu\in {\cal S}$ and its corresponding PCAF \(A^\mu\), we define
\[ R^{\mu}_{\alpha}f(x) := \mathbb{E}_x\left[\int_0^{\infty} e^{-\alpha t}f(X_t)\,dA_t^{\mu} \right]\]
for a measurable function \(f\) and \(\alpha >0\). In particular, we set \(R_{\alpha}\mu = R_{\alpha}^\mu 1\). See Appendix \ref{sec_Appndeix} for details for PCAFs.

For a measurable function \(u\),  $\|u\|_{\infty, q}$ denotes the {\it quasi-essential supremum} norm of a function $u$ defined by
$$
\|u\|_{\infty,q}:= \inf\Big\{ \lambda>0 : \ {\rm Cap}\big( \{x\in E : \ |u(x)| \ge \lambda \} \big)=0 \Big\}.
$$

We use the following definition of Kato class measures given in \cite{AM92}.

\begin{definition}
\begin{enumerate}
\item A smooth measure \(\mu\) is {\it a Kato class measure} if it satisfies \( \lim_{\alpha \to \infty}\|U_\alpha \mu\|_{\infty, q} =0.\) Denote by \(\mathcal{K}\) the set of all Kato class measures.
\item  A smooth measure \(\mu\) is {\it a Dynkin class measure} if it satisfies \(\|U_1 \mu\|_{\infty, q} < \infty \). Denote by \(\mathcal{D}\) the set of all Dynkin class measures.
\item A Kato class measure \(\mu\) is {\it a Green-tight measure} for \(R_1\) if, for any \(\varepsilon >0\), there exists a compact set \(K\) such that \(\|R_1(1_{K^c}\mu)\|_\infty \leq \varepsilon\). Denote by \(\mathcal{K}_\infty= \mathcal{K}_\infty(R_1)\) the set of all Green-tight measures.

\item A Radon measure \(\mu\) has {\it finite energy integrals} if there exists \(C>0\) such that, for any \(f \in \mathcal{F}\cap C_c\), \(\int |f|d\mu \leq C\sqrt{\mathcal{E}_1(f,f)}\). Denote by \(\mathcal{S}_0\) the set of all Radon measures of finite energy integrals.

\item When $(\form,\dom)$ is transient, a Radon measure \(\mu\) has {\it \(0\)-order finite energy integrals} if there exists \(C>0\) such that, for any \(f \in \mathcal{F}\cap C_c\), \(\int |f|d\mu \leq C\sqrt{\mathcal{E}(f,f)}\). Denote by \(\mathcal{S}_0^{(0)}\) the set of all Radon measures of finite \(0\)-order energy integrals.
\end{enumerate}
\end{definition}

For \(\mu \in \mathcal{S}_0\) (resp. \(\mathcal{S}_0^{(0)}\)) and \(\alpha > 0\) (resp. \(\alpha=0\)), by the Riesz representation theorem, there exists an \(\alpha\)-order potential \(U_\alpha \mu \in \mathcal{F}\) such that \(\mathcal{E}_\alpha(f,U_\alpha \mu) = \int \tilde{f} d\mu\) for any \(f\in \mathcal{F}\) (resp. \(f\in \mathcal{F}_e\)) and its quasi-continuous version \(\tilde{f}\). Denote by \(\mathcal{S}_{00}\) the set of all \(\mu \in \mathcal{S}_0\) satisfying \(\mu(E)< \infty\) and \(U_1\mu \in L^\infty.\) Moreover we define \(\mathcal{K}_0:=\mathcal{K} \cap \mathcal{S}_0\), \(\mathcal{K}_{00}:=\mathcal{K} \cap \mathcal{S}_{00}\),  \(\mathcal{D}_0:=\mathcal{D} \cap \mathcal{S}_0\), \(\mathcal{D}_{00}:=\mathcal{D} \cap \mathcal{S}_{00}\), and denote by  \(\mathcal{S}_R\) the set of all smooth Radon measures and \(\mathcal{S}_F\) the set of all finite smooth measures,

We remark that, by the resolvent equation \(\|U_\alpha \mu\|_{\infty,q}<\infty\) for any \(\mu \in \mathcal{D}\) and \(\alpha >0\), and \(\mathcal{K} \subset \mathcal{D}\) holds.

\begin{remark} The following holds for the quasi-essential supremum norm: for a function $v$ defined on $E$ quasi-everywhere, 
$$
\| v\|_{\infty,q}=\inf_{{\rm Cap}(N)=0} \sup_{x\in E\setminus N} |v(x)|.
$$
In fact, for any $\lambda >\|v\|_{\infty,q}$, the set $N_\lambda:=\{x\in E: |v(x)|>\lambda\}$ satisfies ${\rm Cap}(N_\lambda)=0$. Taking $\lambda_n \downarrow \|v\|_{\infty, q}$ and setting $N=\bigcup_{n=1}^\infty N_{\lambda_n}$, we have ${\rm Cap}(N)=0$ and $\sup_{x\in E\setminus N} |v(x)| \le \|v\|_{\infty, q}$.

Conversely, for any $N\subset E$ with ${\rm Cap}(N)=0$, let $\lambda = \sup_{E \setminus N} |v(x)|$. Then $\{x \in E : |v(x)| > \lambda\} \subset N$, hence its capacity is zero. By the definition of $\|v\|_{\infty,q}$, we have $\|v\|_{\infty,q} \le \lambda$, which leads to $\|v\|_{\infty,q} \le \inf_{{\rm Cap}(N)=0} \sup_{x\in E \setminus N} |v(x)|$.
\end{remark}

\medskip
We introduce a simple but useful lemma, which appeared as Lemma 4.1 in \cite{AM92}. For the reader's convenience, we provide a proof here.

\begin{lemma}[{\cite[Lemma 4.1]{AM92}}] \label{potCapalemma}
For a smooth measure \(\mu \in \mathcal{S}\) and its corresponding PCAF \(A^\mu\), the following equalities hold:
\[
\|R_\alpha \mu\|_{\infty, q} = \|R_\alpha \mu\|_{\infty} \quad \text{for any } \alpha > 0,
\]
and
\[
\|\mathbb{E}_{\cdot}[A_t^{\mu}]\|_{\infty, q} = \|\mathbb{E}_{\cdot}[A_t^{\mu}]\|_{\infty} \quad \text{for any } t > 0.
\]
Here, $\|\cdot\|_{\infty}$ denotes the essential supremum norm with respect to the underlying measure $m$.
\end{lemma}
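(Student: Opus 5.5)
We prove the two inequalities separately for $u:=R_\alpha\mu$; the same argument applies verbatim to $u:=\mathbb{E}_\cdot[A^\mu_t]$.

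\emph{The easy direction} $\|u\|_\infty\le\|u\|_{\infty,q}$. Every set of zero capacity is $m$-null, since $m(N)\le \Capa(N)$ for the $1$-capacity. So if $\Capa(\{|u|\ge\lambda\})=0$, then $m(\{|u|\ge\lambda\})=0$, and taking the infimum over such $\lambda$ gives the inequality.

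\emph{The hard direction.} Set $\lambda:=\|u\|_\infty$, which we may assume finite, so that $u\le\lambda$ $m$-a.e. We must show that $u\le\lambda$ quasi-everywhere. The plan is to use the probabilistic structure of $u$. Since $A^\mu$ is a PCAF, it admits a defining set, and its exceptional set $N_0$ is properly exceptional, hence of zero capacity. By the Markov property and additivity of $A^\mu$, for $x\in E\setminus N_0$ and $s>0$,
\[
e^{-\alpha s}\,\mathbb{E}_x\bigl[R_\alpha\mu(X_s)\bigr]
=\mathbb{E}_x\Bigl[\int_s^\infty e^{-\alpha r}\,dA^\mu_r\Bigr]\;\uparrow\; R_\alpha\mu(x)\quad (s\downarrow 0),
\]
by monotone convergence. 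Thus $u(x)=\lim_{s\downarrow0}e^{-\alpha s}p_s u(x)$, which makes $u$ an $\alpha$-excessive function on $E\setminus N_0$.

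We now bound $p_s u(x)$. Since $p_s$ is sub-Markovian and $\{u>\lambda\}$ is $m$-null, the absolute continuity of the transition function off a properly exceptional set gives $p_s(x,\{u>\lambda\})=0$ for q.e. $x$. Hence $p_s u(x)\le\lambda$ q.e., and letting $s\downarrow 0$ yields $u\le\lambda$ q.e.

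The main obstacle is exactly this absolute-continuity step, because the paper avoids assuming (AC). Without (AC), we instead use the fact that an $\alpha$-excessive function which is finely continuous, or quasi-continuous, agrees q.e. with its $m$-version. Concretely, an $m$-null set $B=\{u>\lambda\}$ need not be polar, but it satisfies $\mathbb{P}_m(X_s\in B)=0$ for each $s$. Fubini then gives $\int_0^\infty\mathbb{P}_x(X_s\in B)\,ds=0$ for q.e. $x$; this uses that $R_1 1_B$ is a quasi-continuous element vanishing $m$-a.e., hence vanishing q.e. From $\int_0^\infty\mathbb{P}_x(X_s\in B)\,ds=0$ we get $p_s u(x)\le\lambda$ for a.e. $s$, which is enough to pass to the limit along a sequence $s_n\downarrow0$.

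The same scheme handles $u=\mathbb{E}_\cdot[A^\mu_t]$ using $\mathbb{E}_x[A^\mu_{t}-A^\mu_s]=p_s\,\mathbb{E}_\cdot[A^\mu_{t-s}](x)\le p_s\,\mathbb{E}_\cdot[A^\mu_t](x)$ together with $\mathbb{E}_x[A^\mu_s]\to0$ as $s\downarrow0$.
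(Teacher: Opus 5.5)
Your argument is correct once the first, (AC)-based bound on $p_s u$ is discarded, but it takes a different route from the paper. The paper approximates the \emph{measure}. It takes a nest with $\mu_n=1_{F_n}\mu\in\mathcal{S}_0$ and uses that $R_\alpha\mu_n$ is a quasi-continuous version of $U_\alpha\mu_n$. The bound $R_\alpha\mu_n\le C$ $m$-a.e.\ then upgrades to q.e.\ by \cite[Lemma 2.1.5]{FOT11}, and $R_\alpha\mu_n\uparrow R_\alpha\mu$ finishes.

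You approximate in \emph{time}. The Markov property and additivity give $e^{-\alpha s}p_s u\uparrow u$ off a properly exceptional set. You then apply the quasi-continuity principle not to potentials of $\mu$ but to $R_1 1_B$ with $B=\{u>\lambda\}$. This shows that from q.e.\ starting point the process spends zero Lebesgue time in the $m$-null set $B$, and monotonicity in $s$ lets you pass to the limit along good times. Both proofs rest on the same fact: a quasi-continuous function controlled $m$-a.e.\ is controlled q.e.

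Yours needs no finite-energy approximation at all; it essentially proves $\|u\|_{\infty,q}=\|u\|_\infty$ for every $\alpha$-excessive $u$. It also treats $\mathbb{E}_\cdot[A^\mu_t]$ directly. The paper's ``similarly'' for that case would require an extra quasi-continuity statement for $x\mapsto\mathbb{E}_x[A^{\mu_n}_t]$, which is not immediate. The paper's version is shorter and reuses the $\mathcal{S}_0$-nest machinery that reappears in Lemma \ref{lemdefKato}.

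One small repair is needed in the last step. You write $\mathbb{E}_x[A^\mu_t]=\mathbb{E}_x[A^\mu_s]+p_s\mathbb{E}_\cdot[A^\mu_{t-s}](x)$ and use $\mathbb{E}_x[A^\mu_s]\to0$. This only works where $\mathbb{E}_x[A^\mu_t]<\infty$. Where it is infinite, $\mathbb{E}_x[A^\mu_s]=\infty$ for all small $s$, and you have not shown that such points form an exceptional set.

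Avoid this exactly as in the resolvent case. On the defining set, $A^\mu_{t-s}\circ\theta_s=A^\mu_t-A^\mu_s\uparrow A^\mu_t$ as $s\downarrow0$. By monotone convergence, $p_s\mathbb{E}_\cdot[A^\mu_{t-s}](x)\uparrow\mathbb{E}_x[A^\mu_t]$. Each term is bounded by $p_s\mathbb{E}_\cdot[A^\mu_t](x)\le\lambda$ for a.e.\ $s$ whenever $R_11_B(x)=0$. Hence $\mathbb{E}_x[A^\mu_t]\le\lambda$ q.e., with no finiteness assumption.
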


\begin{proof}
Since any Borel set $N$ with $\mathrm{Cap}(N)=0$ satisfies $m(N)=0$, the inequality $\|R_\alpha \mu\|_{\infty, q} \geq \|R_\alpha \mu\|_{\infty}$ follows immediately. 

To prove the reverse inequality, we may assume that \(C:=\|R_\alpha \mu\|_{\infty} < \infty\). By \cite[Theorem 2.2.4]{FOT11}, there exists an increasing sequence of compact sets  \(\{F_n\}_n\) such that \(\mu_n:=1_{F_n}\mu \in \mathcal{S}_0, \mu(E\setminus \bigcup _{n=1}^\infty F_n)=0\) and \(\lim_{n\to \infty}\Capa(K\setminus F_n)=0\) for any compact set $K\subset E$.  For each $n$, $R_\alpha \mu_n$ is quasi-continuous and satisfies $R_\alpha \mu_n \le R_\alpha \mu \le C$ $m$-a.e., which implies $R_\alpha \mu_n(x) \le C$ for quasi-every $x \in E$ from \cite[Lemma 2.1.5.]{FOT11}.  
Note here that $A_t^\mu$ does not charge $E \setminus \bigcup F_n$  q.e. because $\mu(E\setminus \bigcup _{n=1}^\infty F_n)=0$. 
Then by the monotone convergence theorem, we have $R_\alpha \mu_n(x) \uparrow R_\alpha \mu(x)$ for quasi-every $x \in E$.  
Combining these, we obtain $R_\alpha \mu(x) \le C$ for quasi-every $x \in E$, which implies $\|R_\alpha \mu\|_{\infty, q} \le C$.

Similarly, we can prove \(\|\mathbb{E}_{\cdot} [A_t^{\mu}]\|_{\infty, q} = \|\mathbb{E}_{\cdot} [A_t^{\mu}]\|_{\infty}\).
\end{proof}

By virtue of this lemma, we can use the essential supremum norm $\|\cdot\|_{\infty}$ and the quasi-essential supremum norm $\|\cdot\|_{\infty,q}$ interchangeably when considering the potentials of Kato class measures.

\begin{lemma}\label{lemdefKato}It holds that
\begin{eqnarray*}
\mathcal{K} \!\! &=& \big\{\mu \in \mathcal{S} \mid \lim_{t \to 0}\|\mathbb{E}_{\cdot}[A_t^\mu]\|_{\infty} =0 \big\}\  =\  \big\{\mu \in \mathcal{S} \mid \lim_{t \to 0}\|\mathbb{E}_{\cdot}[A_t^\mu]\|_{\infty,q } =0 \big\}. 
\end{eqnarray*}
\end{lemma}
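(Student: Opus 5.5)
The second equality is immediate from Lemma \ref{potCapalemma}, since for each $t>0$ we have $\|\mathbb{E}_{\cdot}[A_t^\mu]\|_{\infty,q}=\|\mathbb{E}_{\cdot}[A_t^\mu]\|_{\infty}$. So it suffices to show that $\lim_{\alpha\to\infty}\|U_\alpha\mu\|_{\infty,q}=0$ is equivalent to $\lim_{t\to 0}\|\mathbb{E}_\cdot[A^\mu_t]\|_{\infty,q}=0$. Here $U_\alpha\mu$ is understood as $R_\alpha\mu=\mathbb{E}_\cdot[\int_0^\infty e^{-\alpha s}dA^\mu_s]$, which is its quasi-continuous version when $\mu\in\mathcal{S}_0$. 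I will work pointwise for q.e. $x$, outside a properly exceptional set on which $A^\mu$ is a genuine PCAF, and compare the two quantities directly.

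First, the direction ``$t\to0$ implies $\alpha\to\infty$''. Put $\varphi(t):=\sup_{x\notin N}\mathbb{E}_x[A_t]$, where $N$ is exceptional. By the Markov property and additivity, $A_{t+s}=A_t+A_s\circ\theta_t$, so $\mathbb{E}_x[A_{t+s}]\le \mathbb{E}_x[A_t]+\mathbb{E}_x[\mathbb{E}_{X_t}[A_s]]$. Because $X_t$ avoids a properly exceptional set, this gives subadditivity $\varphi(t+s)\le\varphi(t)+\varphi(s)$, and hence $\varphi(nt)\le n\varphi(t)$. Next, integrate by parts:
\[
R_\alpha\mu(x)=\int_0^\infty \alpha e^{-\alpha s}\,\mathbb{E}_x[A_s]\,ds .
\]
Fix $t_0$ and split $[0,\infty)$ into the intervals $[kt_0,(k+1)t_0)$. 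On the $k$-th interval we bound $\mathbb{E}_x[A_s]\le (k+1)\varphi(t_0)$. This yields
\[
\|R_\alpha\mu\|_{\infty,q}\le \varphi(t_0)\sum_{k\ge0}e^{-\alpha k t_0}\le \frac{\varphi(t_0)}{1-e^{-\alpha t_0}}.
\]
Alternatively, one can use the simpler bound $R_\alpha\mu\le \mathbb{E}_x[A_{t_0}]+e^{-\alpha t_0}R_\alpha\mu\circ$shift, which gives the same estimate. Letting $\alpha\to\infty$ and then $t_0\to0$ shows that $\mathcal{K}$ contains the right-hand set.

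Conversely, note that $e^{-\alpha t}A_t\le\int_0^t e^{-\alpha s}\,dA_s$. Hence $\mathbb{E}_x[A_t]\le e^{\alpha t}R_\alpha\mu(x)$ for q.e. $x$, and so $\|\mathbb{E}_\cdot[A_t]\|_{\infty,q}\le e^{\alpha t}\|R_\alpha\mu\|_{\infty,q}$. Choosing $\alpha=1/t$ gives the bound $e\,\|R_{1/t}\mu\|_{\infty,q}$, which tends to $0$ as $t\to0$ when $\mu\in\mathcal{K}$.

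The main obstacle is the subadditivity step. The function $\mathbb{E}_x[A_s]$ is only controlled off an exceptional set, so we need that $X_t$ does not enter that set. I would handle this by taking $N$ properly exceptional, which is possible via the defining set of the PCAF, possibly enlarged to a Borel properly exceptional set. Equivalently, I would use Lemma \ref{potCapalemma} to replace $\|\cdot\|_{\infty,q}$ by $\|\cdot\|_\infty$, and then invoke absolute continuity of the law of $X_t$ on the complement of $N$ with respect to m.
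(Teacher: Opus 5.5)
Your argument is correct. The inclusion of $\mathcal{K}$ in the right-hand sets is the paper's argument. The paper fixes $\alpha$, lets $t\to 0$ and then $\alpha\to\infty$; your choice $\alpha=1/t$ does this in one step.

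For the converse you take a genuinely different route. The paper bounds $R_\alpha\mu_n\le \mathbb{E}_\cdot[A^{\mu_n}_t]+e^{-\alpha t}\|R_\alpha\mu_n\|_{\infty,q}$ once and rearranges an inequality of the form $a\le b+e^{-\alpha t}a$. This needs $a=\|R_\alpha\mu_n\|_{\infty,q}<\infty$ a priori. That is why the paper passes to restrictions $\mu_n=1_{F_n}\mu$ along a nest, chosen (as Remark~\ref{Rem:Kato} says) so that their potentials are bounded, and then lets $n\to\infty$ by monotone convergence.

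You instead combine subadditivity of $\varphi$ with $R_\alpha\mu(x)=\int_0^\infty \alpha e^{-\alpha s}\mathbb{E}_x[A_s]\,ds$ and the block decomposition. This gives $\|R_\alpha\mu\|_{\infty,q}\le \varphi(t_0)/(1-e^{-\alpha t_0})$ directly, with no finiteness assumption and no approximation of $\mu$. It is exactly the alternative sketched in Remark~\ref{Rem:Kato}.

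For the same reason, your parenthetical ``alternatively'' is not free. Used as a one-step fixed-point inequality, it reintroduces the a priori finiteness problem. Only its iterated (series) form avoids it, and that form is just your main computation.

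Two points should be made explicit.

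\emph{Choice of $N$.} The hypothesis only controls $\|\mathbb{E}_\cdot[A_t]\|_{\infty,q}$, so $\varphi(t_0)\to 0$ does not hold for an arbitrary exceptional $N$. Choose $N$ to contain the exceptional set of the PCAF. Also include, for a sequence $t_j\downarrow 0$, exceptional sets off which $\sup_x \mathbb{E}_x[A_{t_j}]=\|\mathbb{E}_\cdot[A_{t_j}]\|_{\infty,q}$. Then enlarge $N$ to a Borel properly exceptional set; monotonicity of $\varphi$ in $t$ does the rest. Equivalently, run your subadditivity argument directly for $t\mapsto \|\mathbb{E}_\cdot[A_t]\|_{\infty,q}$.

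\emph{Absolute continuity.} Drop the appeal to absolute continuity of the law of $X_t$ with respect to $m$. That is the (AC) condition, which the paper deliberately avoids, and it is not needed. By Lemma~\ref{potCapalemma}, the set where $\mathbb{E}_\cdot[A_s]$ exceeds its essential supremum has zero capacity. So the properly exceptional set argument already covers it.
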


\begin{proof}
By Lemma \ref{potCapalemma}, it is enough to show that the first equation. 
First, assume \(\mu \in \mathcal{K}\). For any \(\alpha > 0\) and \(t > 0\), we have the elementary inequality:
$$
\mathbb{E}_x[A_t^\mu] = \mathbb{E}_x\left[ \int_0^t dA_s^\mu \right] \leq e^{\alpha t} \mathbb{E}_x\left[ \int_0^t e^{-\alpha s} dA_s^\mu \right] \leq e^{\alpha t} R_\alpha \mu(x).
$$
Taking the essential supremum norm, we obtain $\|\mathbb{E}_{\cdot}[A_t^\mu]\|_{\infty} \leq e^{\alpha t} \|R_\alpha \mu\|_{\infty}$. 
Since \(\mu \in \mathcal{K}\), taking the limit superior as \(t \downarrow 0\) yields \(\limsup_{t \downarrow 0} \|\mathbb{E}_{\cdot}[A_t^\mu]\|_{\infty} \leq \|R_\alpha \mu\|_{\infty}\) for each fixed $\alpha > 0$.  By letting \(\alpha \to \infty\), we conclude \(\lim_{t \to 0} \|\mathbb{E}_{\cdot}[A_t^\mu]\|_{\infty} = 0\).

Conversely, assume \(\lim_{t \to 0} \|\mathbb{E}_{\cdot}[A_t^\mu]\|_{\infty} = 0\) for a smooth measure $\mu$. As in the proof of Lemma \ref{potCapalemma}, take a nest \(\{F_n\}\) such that \(\mu_n := 1_{F_n}\mu \in \mathcal{S}_0\) and \(\mu(E \setminus \bigcup F_n) = 0\). For each \(n\), the potential \(R_\alpha \mu_n\) is quasi-continuous. Using the Markov property, for any \(t > 0\), we have:
\begin{eqnarray*}
R_\alpha \mu_n(x) &=& \mathbb{E}_x\left[\int_0^t e^{-\alpha s} dA_s^{\mu_n} \right] + \mathbb{E}_x\left[ e^{-\alpha t} R_\alpha \mu_n(X_t) \right] \\
&\leq& \mathbb{E}_x[A_t^{\mu_n}] + e^{-\alpha t} \|R_\alpha \mu_n\|_{\infty, q}.
\end{eqnarray*}
Taking the quasi-essential supremum over \(x\) and using Lemma \ref{potCapalemma},
we get \(\|R_\alpha \mu_n\|_{\infty, q} \leq \|\mathbb{E}_{\cdot}[A_t^{\mu}]\|_{\infty} + e^{-\alpha t} \|R_\alpha \mu_n\|_{\infty, q}\). Rearranging this gives:
\[ \|R_\alpha \mu_n\|_{\infty, q} \leq \frac{1}{1 - e^{-\alpha t}} \|\mathbb{E}_{\cdot}[A_t^\mu]\|_{\infty}. \]
Since \(R_\alpha \mu_n \uparrow R_\alpha \mu\) q.e.,  taking \(n \to \infty\), we have \(\|R_\alpha \mu\|_{\infty, q} \leq \frac{1}{1 - e^{-\alpha t}} \|\mathbb{E}_{\cdot}[A_t^\mu]\|_{\infty}\). Finally, by letting \(\alpha \to \infty\) and then \(t \to 0\), we obtain \(\lim_{\alpha \to \infty} \|R_\alpha \mu\|_{\infty, q} = 0\), which implies $\mu \in \mathcal{K}$.
\end{proof}

\begin{remark}
  \label{Rem:Kato}
  While there are several equivalent definitions of the Kato class,
  it is often defined in the sense of positive continuous additive functionals
  as in the lemma above.
  In the latter half of the proof of Lemma \ref{lemdefKato},
  we take an appropriate nest to ensure the boundedness of
  the $\alpha$-potential, but in fact, it is not necessary to take such a nest.
  Assuming the short-time asymptotic of the PCAF $A_{t}$,
  we can see that $\mathbb{E}_{x}[A_{t}^{\mu}]$ has at most linear growth in $t$.
  Using integration by parts, we can obtain the boundedness of $\alpha$-potential
  ($\alpha > 0$). The details are omitted here since this topic is not essential. 
\end{remark}

We consider relationships between the several classes of \(\mathcal{S}\). The following are the main results of this section.

\begin{theorem}\label{Thm_loc}
It holds that
\begin{align*}
\mathcal{S}_R\ =\ (\mathcal{S}_R)_{loc}\ = \ (\mathcal{S}_F)_{loc}\ \supset \ (\mathcal{S}_0)_{loc}\ &\supset\ (\mathcal{S}_{00})_{loc}\ =\ \mathcal{D}_{loc}\  =\ (\mathcal{D}_0)_{loc}\ =\ (\mathcal{D}_{00})_{loc}\\&\ \ \supset\ \mathcal{K}_{loc}\ =\ (\mathcal{K}_0)_{loc}\ =\ (\mathcal{K}_{00})_{loc}\ =\ (\mathcal{K}_{\infty})_{loc}.
\end{align*}
\end{theorem}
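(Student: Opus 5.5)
Almost every link in the chain is obtained by restricting to a compact set $K$ and checking membership of $1_K\mu$ in the finer class. Proposition \ref{Prop_loccol_1}(3) and the obvious inclusions give the easy directions: $\mathcal{K}_{00}\subset\mathcal{K}_0\subset\mathcal{K}$ and $\mathcal{K}_\infty\subset\mathcal{K}$, $\mathcal{D}_{00}\subset\mathcal{D}_0\subset\mathcal{D}$ and $\mathcal{D}_{00}\subset\mathcal{S}_{00}$, $\mathcal{S}_{00}\subset\mathcal{S}_0\subset\mathcal{S}_R$, and $\mathcal{K}\subset\mathcal{D}$. After taking $(\cdot)_{loc}$, what remains is the reverse inclusions.

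\emph{First line.} $\mathcal{S}_R=(\mathcal{S}_R)_{loc}$ holds because $1_K\mu$ is Radon whenever $\mu$ is, and conversely $1_K\mu\in\mathcal{S}_R$ for every compact $K$ forces $\mu(K)<\infty$. For $(\mathcal{S}_F)_{loc}$, note that $1_K\mu$ is finite for all compact $K$ exactly when $\mu$ is Radon.

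\emph{Dynkin/$\mathcal{S}_{00}$ block.} It suffices to show $(\mathcal{S}_{00})_{loc}\supset\mathcal{D}_{loc}$ and $(\mathcal{D}_{00})_{loc}\supset(\mathcal{S}_{00})_{loc}$.
\begin{itemize}
\item Let $\mu\in\mathcal{D}_{loc}$ and $K$ compact, so $\nu:=1_K\mu\in\mathcal{D}$. Since $\nu$ is smooth, it charges no polar set. To see that $\nu(K)<\infty$, take $h\in\mathcal{F}\cap C_c$ with $h\ge1$ on $K$. Then
\[\nu(K)\le\int\tilde h\,d\nu=\mathcal{E}_1(h,R_1\nu)\]
(justified through the approximation $1_{F_n}\nu\in\mathcal{S}_0$), and with $\mathcal{E}_1(R_1\nu_n,R_1\nu_n)=\int R_1\nu_n\,d\nu_n\le\|R_1\nu\|_\infty\nu_n(E)$ this gives a bound. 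A cleaner route is the Stollmann--Voigt type bound
\[\int f^2\,d\nu\le\|R_1\nu\|_\infty\,\mathcal{E}_1(f,f),\]
first for $\nu_n=1_{F_n}\nu$ (where $R_1\nu_n\le R_1\nu$) and then by monotone convergence for $\nu$. Taking $f=h$ yields $\nu(E)<\infty$. The same inequality gives
\[\int|f|\,d\nu\le\nu(E)^{1/2}\|R_1\nu\|_\infty^{1/2}\mathcal{E}_1(f,f)^{1/2},\]
so $\nu\in\mathcal{S}_0$, $\nu$ is finite, and $U_1\nu=R_1\nu$ is bounded. Hence $\nu\in\mathcal{S}_{00}\cap\mathcal{D}=\mathcal{D}_{00}$.
\item Conversely, for $\nu\in\mathcal{S}_{00}$, $U_1\nu\in L^\infty$ and Lemma \ref{potCapalemma} give $\|U_1\nu\|_{\infty,q}<\infty$, so $\nu\in\mathcal{D}$, i.e.\ $\mathcal{S}_{00}\subset\mathcal{D}_{00}$.
\end{itemize}
Together these show that all four local classes coincide. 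The remaining inclusion $(\mathcal{S}_0)_{loc}\supset(\mathcal{S}_{00})_{loc}$ is trivial.

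\emph{Kato block.} It suffices to show that for $\mu\in\mathcal{K}_{loc}$ and compact $K$, $1_K\mu\in\mathcal{K}_{00}\cap\mathcal{K}_\infty$. Since $\mathcal{K}\subset\mathcal{D}$, the previous step gives $1_K\mu\in\mathcal{D}_{00}$, hence $1_K\mu\in\mathcal{K}_{00}$. For Green-tightness of $\nu=1_K\mu$, choose the compact set $K$ itself: then $1_{K^c}\nu=0$, so $R_1(1_{K^c}\nu)=0$. Thus $\nu\in\mathcal{K}_\infty$, and Proposition \ref{Prop_loccol_1}(3) finishes this block.

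The main obstacle is the Dynkin step: showing that a compactly supported Dynkin measure is finite and of finite energy. This needs the Stollmann--Voigt inequality with a rigorous approximation argument (choosing the nest $F_n$ as in Lemma \ref{potCapalemma} and passing to the limit via the quasi-continuity of $R_1\nu_n$). I would prove it directly by writing
\[\int f^2\,d\nu_n=\mathcal{E}_1(f^2\text{-type test},U_1\nu_n)\]
and applying Cauchy--Schwarz to the PCAF representation; everything else is bookkeeping.
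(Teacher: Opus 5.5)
Your proposal is correct and follows the paper's proof. The Stollmann--Voigt inequality (Proposition \ref{SVineq}), tested against $h\ge 1$ on $K$, gives finiteness and finite energy of $1_K\mu$ for $\mu\in\mathcal{D}_{loc}$. The inclusion $\mathcal{S}_{00}\subset\mathcal{D}$ comes from Lemma \ref{potCapalemma}. The Kato block reduces to the Dynkin step (the paper routes it through Lemma \ref{K00isS00}), and Green-tightness of $1_K\mu$ is witnessed by $K$ itself.

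The differences are minor. For $(\mathcal{S}_R)_{loc}\subset\mathcal{S}_R$ the paper checks inner regularity by hand, whereas you use that locally finite Borel measures on a locally compact separable metric space are Radon. You also need not reprove Stollmann--Voigt. Your first estimate $\nu_n(K)^2\le\mathcal{E}_1(h,h)\|R_1\nu\|_\infty\nu_n(K)$, together with $\int|f|\,d\nu_n=\mathcal{E}_1(|f|,U_1\nu_n)$, already closes the Dynkin step, as in Lemma \ref{Lem:smooth-01}.
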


\begin{theorem}\label{Thm_col}
It holds that
\begin{align*}
\mathcal{S}\ =\ (\mathcal{S}_R)_{col}\ = \ (\mathcal{S}_F)_{col}\ =\ (\mathcal{S}_0)_{col}\ &=\ (\mathcal{S}_{00})_{col}\ =\ \mathcal{D}_{col}\  =\ (\mathcal{D}_0)_{col}\ =\ (\mathcal{D}_{00})_{col}\\
&\ \ =\ \mathcal{K}_{col}\ =\ (\mathcal{K}_0)_{col}\ =\ (\mathcal{K}_{00})_{col}\ =\ (\mathcal{K}_{\infty})_{col}.
\end{align*}
\end{theorem}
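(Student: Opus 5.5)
Since every class in the chain is contained in \(\mathcal{S}\), and \(\mathcal{S}_{col}=\mathcal{S}\) (shown above), Proposition \ref{Prop_loccol_1} (3) gives \(\mathcal{T}_{col}\subset\mathcal{S}\) for every \(\mathcal{T}\) in the statement. So it suffices to show \(\mathcal{S}\subset\mathcal{T}_{col}\) for the smallest classes and then use monotonicity. The inclusions \(\mathcal{K}_{00}\subset\mathcal{K}_0\subset\mathcal{K}\subset\mathcal{D}\), \(\mathcal{K}_{00}\subset\mathcal{D}_{00}\subset\mathcal{D}_0\), \(\mathcal{D}_{00}\subset\mathcal{S}_{00}\subset\mathcal{S}_0\subset\mathcal{S}_R\) are immediate from the definitions. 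The inclusion \(\mathcal{S}_0\subset\mathcal{S}_F\) is not automatic, but \((\mathcal{S}_F)_{col}=\mathcal{S}\) follows directly from the definition of smoothness: there is \(\{F_k\}\in\nest\) with \(\mu(F_k)<\infty\). The two bottom classes then reduce everything to proving
\[
\mathcal{S}\subset(\mathcal{K}_{00})_{col}\cap(\mathcal{K}_\infty)_{col}.
\]

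For \(\mathcal{K}_\infty\) I will show \(\mathcal{K}_{00}\cap\{\mu:\mu \text{ has compact support}\}\subset\mathcal{K}_\infty\). This is trivial, since for \(K\) the compact support we get \(1_{K^c}\mu=0\). It is therefore enough to produce, for a given \(\mu\in\mathcal{S}\), a compact nest \(\{F_k\}\) with \(1_{F_k}\mu\in\mathcal{K}_{00}\). The \(F_k\) are compact, so these measures automatically have compact support.

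The core step, which I expect to be the main obstacle, is constructing such a nest. I would follow Albeverio--Ma. First take, by \cite[Theorem 2.2.4]{FOT11}, a compact nest \(\{G_n\}\) with \(\nu_n:=1_{G_n}\mu\in\mathcal{S}_0\). Then \(U_1\nu_n\) is quasi-continuous, and there is a nest \(\{H^{(n)}_j\}\) of compact sets on which \(U_1\nu_n\) is continuous. Set \(A_{n,j,c}:=G_n\cap H^{(n)}_j\cap\{U_1\nu_n\le c\}\). By the domination principle, \(U_1(1_{A}\nu_n)\le c\) q.e. for such \(A\). Moreover \(\Capa(\{U_1\nu_n>c\})\le\mathcal{E}_1(U_1\nu_n,U_1\nu_n)/c^2\to0\), so these sets exhaust \(G_n\) up to small capacity. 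This gives \(\mathcal{D}_{00}\) pieces.

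To get the Kato property, I would use continuity on the compact set \(A=A_{n,j,c}\). For \(x\in A\),
\[
R_\alpha(1_A\nu_n)(x)\le \mathbb{E}_x[A^{1_A\nu_n}_t]+e^{-\alpha t}c ,
\]
so it suffices to make \(\sup_{x\in A}\mathbb{E}_x[A_t]\) small. I would try to obtain this from continuity of \(R_1(1_A\nu_n)\) restricted to \(A\), and the fact that \(R_1(1_A\nu_n)-e^{-t}P_tR_1(1_A\nu_n)\to0\) uniformly on \(A\) by a Dini-type argument on the compact \(A\). Then the domination principle (the maximum is attained on the support) transfers the bound from \(A\) to q.e. \(x\). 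If Dini fails, the fallback is to shrink further using the sets \(\{\sup_{t\le1/m}\mathbb{E}_\cdot[A_t]>\varepsilon\}\), whose capacity I would control by Lemma \ref{lemdefKato}-type estimates.

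Finally, I will diagonalize over \((n,j,c)\) exactly as in the proof of Proposition \ref{Prop_loccol_1} (5), taking finite unions \(B_N\). These remain in \(\mathcal{K}_{00}\) because the class is closed under finite sums, and they form a compact nest. This gives \(\mu\in(\mathcal{K}_{00})_{col}\), and the chain of equalities follows.
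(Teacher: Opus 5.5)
\textbf{There is a genuine gap, and it sits exactly in the step you flagged as the main obstacle.} The rest of the plan is sound: the reduction to $\mathcal{S}\subset(\mathcal{K}_{00})_{col}$, the treatment of $\mathcal{S}_F$ from the definition of smoothness, the observation that compactly supported members of $\mathcal{K}$ are trivially Green-tight (more direct than the paper's detour through $\mathcal{K}_{loc}$ and Proposition \ref{Prop_loccol_3}), the $\mathcal{S}_{00}$-pieces $A_{n,j,c}$, the domination-principle transfer from $A$ to q.e.\ $x$ (instead of the paper's hitting-time argument), and the diagonal union.

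The Dini step does not work as stated, for two reasons. First, the nest $\{H^{(n)}_j\}$ makes $U_1\nu_n$ continuous, but this says nothing about the different potential $R_1(1_A\nu_n)$ on $A$. Continuity of $U_1(1_A\nu_n)+U_1(1_{A^c}\nu_n)$ passes to the summands only if both are lower semicontinuous, and potentials are not lower semicontinuous in this generality. Second, even granting that continuity, Dini's theorem needs $w_t:=R_1(1_A\nu_n)-e^{-t}P_tR_1(1_A\nu_n)$ to be upper semicontinuous on $A$, i.e.\ $P_tR_1(1_A\nu_n)$ lower semicontinuous there. That requires a Feller-type property, which the theorem does not assume. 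Your fallback has the right shape, but Lemma \ref{lemdefKato} only compares sup norms of $\mathbb{E}_\cdot[A_t]$ and $R_\alpha\mu$. It gives no bound on $\Capa(\{\mathbb{E}_\cdot[A_{1/m}]>\varepsilon\})$. What is actually needed is the finite-energy information $U_1(1_A\nu_n)\in\mathcal{F}$, not just boundedness.

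The missing idea is the paper's Lemma \ref{S00isalmostK00}. Take $\nu\in\mathcal{S}_{00}$.
\begin{enumerate}
\item By the Markov property, $w_t=\mathbb{E}_\cdot[\int_0^t e^{-s}\,dA^\nu_s]=U_1\nu-e^{-t}P_tU_1\nu$.
\item Since $U_1\nu\in\mathcal{F}$ and $\{T_t\}$ is strongly continuous on $(\mathcal{F},\mathcal{E}_1)$, you get $w_t\in\mathcal{F}$ and $\mathcal{E}_1(w_t,w_t)\to0$ as $t\downarrow0$.
\item The capacitary inequality $\Capa(\{\tilde w_t>\lambda\})\le\lambda^{-2}\mathcal{E}_1(w_t,w_t)$, or equivalently quasi-uniform convergence along some $t_m\downarrow0$, combined with $\mathbb{E}_x[A^\nu_t]\le e^t w_t(x)$ and monotonicity in $t$, yields a nest $\{G_k\}$ with $\sup_{x\in G_k}\mathbb{E}_x[A^\nu_t]\to0$.
\end{enumerate}
You must then restrict the measure once more, to $A\cap G_k$. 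The inequality $\mathbb{E}_x[A_t^{1_{A\cap G_k}\nu}]\le\mathbb{E}_x[A^\nu_t]$ keeps the bound on the new support, so you never need continuity of the potential of the final restricted measure. After that, your domination-principle step and the diagonalization go through as planned.
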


Theorem \ref{Thm_col} says that, for any positive Borel measure \(\mu\) on \(E\), \(\mu \in \mathcal{S}\) if and only if there exists a compact nest \(\{F_\ell\}_\ell\) such that \(1_{F_\ell}\mu \in \mathcal{T}\) for each \(\ell\) and \(\mu(E\setminus \bigcup F_\ell)=0\), where \(\mathcal{T}\) is any of \(\mathcal{S}_R, \mathcal{S}_F, \mathcal{S}_0, \mathcal{S}_{00}, \mathcal{D}, \mathcal{D}_0, \mathcal{D}_{00}, \mathcal{K}, \mathcal{K}_0, \mathcal{K}_{00}, \mathcal{K}_\infty.\)

To prove Theorem \ref{Thm_loc}, \ref{Thm_col}, we need the following lemma essentially due to \cite[Lemma 3.7]{ABM91}.
\begin{lemma}[{cf. \cite[Lemma 3.7]{ABM91}}] \label{K00isS00}
It holds that $\mathcal{K}_{00} =\mathcal{K} \cap \mathcal{S}_F $ and  $\mathcal{D}_{00} =\mathcal{D} \cap \mathcal{S}_F $.
\end{lemma}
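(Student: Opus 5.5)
We prove the second identity \(\mathcal{D}_{00}=\mathcal{D}\cap\mathcal{S}_F\). The first identity then follows by intersecting both sides with \(\mathcal{K}\), since \(\mathcal{K}\subset\mathcal{D}\) gives \(\mathcal{K}\cap\mathcal{D}_{00}=\mathcal{K}_{00}\).

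The inclusion \(\mathcal{D}_{00}\subset\mathcal{D}\cap\mathcal{S}_F\) is immediate. Every \(\mu\in\mathcal{S}_{00}\) is a smooth Radon measure (measures in \(\mathcal{S}_0\) charge no set of zero capacity) with \(\mu(E)<\infty\).

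For the converse, fix \(\mu\in\mathcal{D}\cap\mathcal{S}_F\) and set \(M:=\|R_1\mu\|_{\infty,q}<\infty\). The plan is to show \(\mu\in\mathcal{S}_0\) and that \(U_1\mu=R_1\mu\) is bounded. As in the proof of Lemma \ref{potCapalemma}, take an increasing sequence of compact sets \(\{F_n\}\) with \(\mu_n:=1_{F_n}\mu\in\mathcal{S}_0\) and \(\mu(E\setminus\bigcup F_n)=0\). For \(\mu_n\in\mathcal{S}_0\) we have \(U_1\mu_n=R_1\mu_n\) q.e., and \(R_1\mu_n\le R_1\mu\le M\) q.e. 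Hence
\[
\mathcal{E}_1(U_1\mu_n,U_1\mu_n)=\int \widetilde{U_1\mu_n}\,d\mu_n\le M\,\mu(E).
\]
So \(\{U_1\mu_n\}\) is bounded in \((\mathcal{F},\mathcal{E}_1)\).

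Next, for \(f\in\mathcal{F}\cap C_c\) we estimate
\[
\int|f|\,d\mu_n=\mathcal{E}_1(|f|,U_1\mu_n)\le \sqrt{\mathcal{E}_1(f,f)}\,\sqrt{M\mu(E)},
\]
using \(\mathcal{E}_1(|f|,|f|)\le\mathcal{E}_1(f,f)\). Monotone convergence in \(n\) gives \(\int|f|\,d\mu\le\sqrt{M\mu(E)}\,\sqrt{\mathcal{E}_1(f,f)}\).

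It remains to check that \(\mu\) is Radon, which the definition of \(\mathcal{S}_0\) requires. Since \(\mu\) is finite, \(\mu(K)\le\mu(E)<\infty\) for every compact \(K\), so local finiteness is automatic. Thus \(\mu\in\mathcal{S}_0\). This step is where the finiteness of \(\mu\) is genuinely used: without it, the energy bound above fails, which matches the later remark that infinite Kato measures need not lie in \(\mathcal{S}_0\).

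Finally, we identify \(U_1\mu\) with \(R_1\mu\) to get boundedness. Since \(\mu_n\le\mu_{n+1}\), the potentials \(U_1\mu_n\) increase, and being \(\mathcal{E}_1\)-bounded they converge \(\mathcal{E}_1\)-weakly. The limit is identified as \(U_1\mu\) by testing \(\mathcal{E}_1(f,U_1\mu_n)=\int\tilde f\,d\mu_n\to\int\tilde f\,d\mu\) for \(f\in\mathcal{F}\) (dominated convergence, since \(\tilde f\in L^1(\mu)\)). On the other hand, \(U_1\mu_n=R_1\mu_n\uparrow R_1\mu\) q.e. Hence \(U_1\mu=R_1\mu\le M\) \(m\)-a.e., so \(U_1\mu\in L^\infty\) and \(\mu\in\mathcal{S}_{00}\).

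The main obstacle is the passage to the limit, i.e.\ justifying that the weak limit coincides with the pointwise monotone limit. I would handle it via Banach–Saks: Cesàro means of a subsequence converge strongly in \(\mathcal{E}_1\), so a further subsequence converges q.e., and the q.e. limit agrees with the increasing limit \(R_1\mu\).
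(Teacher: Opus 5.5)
Your argument is correct, but it takes a different route from the paper's. The paper never truncates $\mu$. It works with $R_1\mu$ directly: the Revuz correspondence gives $\int_E (R_1\mu)^2\,dm=\int_E R_1(R_1\mu)\,d\mu\le \|R_1\mu\|_{\infty,q}\,\mu(E)$, so $R_1\mu\in L^2(E;m)$. It then uses the identity $R_1\mu-e^{-t}P_tR_1\mu=\mathbb{E}_\cdot\bigl[\int_0^t e^{-s}\,dA^\mu_s\bigr]$ and Revuz again to show $\frac1t(R_1\mu-e^{-t}P_tR_1\mu,R_1\mu)\to\int_E R_1\mu\,d\mu<\infty$, which puts $R_1\mu$ in $\mathcal{F}$ by the semigroup characterization of the form domain.

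You instead approximate by $\mu_n=1_{F_n}\mu\in\mathcal{S}_0$, obtain the uniform bound $\mathcal{E}_1(U_1\mu_n,U_1\mu_n)\le M\mu(E)$, and pass to the limit in the defining inequality of $\mathcal{S}_0$ by monotone convergence. You then identify $U_1\mu$ with $R_1\mu$ as a weak limit. This is essentially the mechanism the paper uses later in Lemma~\ref{Lem:smooth-01} and Corollary~\ref{Cor:FEI}.

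The paper's route is shorter and needs no approximating nest or limit identification. However, it leaves implicit (``it is enough to show'') why $R_1\mu\in\mathcal{F}$ already gives $\mu\in\mathcal{S}_0$ with $U_1\mu=R_1\mu$. Yours verifies both facts explicitly and gives the concrete constant $\sqrt{M\mu(E)}$ in the energy inequality.

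Your final step can be lightened. Once $\mu\in\mathcal{S}_0$ is established, the standard fact that $R_1\mu$ is a quasi-continuous version of $U_1\mu$ for measures in $\mathcal{S}_0$ (used repeatedly in the paper) finishes the proof. Even without that fact, $\mathcal{E}_1$-weak convergence gives weak convergence in $L^2(E;m)$ (test against $R_1g$). Together with the monotone limit $R_1\mu_n\uparrow R_1\mu\le M$, this identifies the limit $m$-a.e., so Banach--Saks is not needed.
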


\begin{proof}
Since  \(\mathcal{K}_{00} := \mathcal{K} \cap \mathcal{S}_{00}\) and \(\mathcal{D}_{00} := \mathcal{D} \cap \mathcal{S}_{00}\), it is clear that $\mathcal{K}_{00} \subset \mathcal{K} \cap \mathcal{S}_F $ and  $\mathcal{D}_{00} \subset \mathcal{D} \cap \mathcal{S}_F $.

We take \(\mu \in  \mathcal{D} \cap \mathcal{S}_F \).  Since $\mu$ has finite mass and bounded potential, it is enough to show that \(R_1\mu \in \mathcal{F}\). By the Revuz correspondence \cite[Theorem 4.1.1 (iii)]{CF12}, we have
\begin{eqnarray*}
\int_E |R_1\mu|^2\, dm &=& \int_E R_1\mu \cdot R_1\mu\, dm\ =\ \mathbb{E}_{R_1\mu\,m}\left[\int_0^\infty e^{- t}\,dA_t^\mu \right]\\
&=& \int_E R_1(R_1\mu)\, d\mu \ \leq \ \|R_1\mu\|_{\infty, q} \, \mu(E) \ <\  \infty,
\end{eqnarray*}
which implies that $R_1\mu \in L^2(E;m)$. In the same way as in the proof of Lemma \ref{lemdefKato}, we have 
\[R_1\mu-e^{-t}P_tR_1\mu = \mathbb{E}_x\left[\int_0^t e^{-s} dA_s^\mu \right]\]
and so, by the Revuz correspondence,
\[\lim_{t \to 0} \frac{1}{t} (R_1\mu-e^{-t}P_tR_1\mu, R_1\mu) = \int_E R_1\mu\,d\mu \leq \|R_1 \mu\|_{\infty,q} \, \mu(E) < \infty. \]
Since the limit of the approximating energy forms is finite, we conclude that \(R_1\mu \in \mathcal{F}\), and hence \(\mu \in \mathcal{S}_{00}\).
\end{proof}

The following is called the Stollmann--Voigt inequality. Under the assumption of the existence of the heat kernel, this inequality is well-known for a smooth measure in the strict sense (see, e.g., \cite{SV96, ST05}), and this inequality is proved in \cite{BA04} without such an assumption.  Here, we present an alternative probabilistic proof, also without relying on the heat kernel assumptions.
  Note that this approach yields the sharp bound without the constant $4$ that would appear if one used the capacitary strong type inequality.

\begin{proposition}[the Stollmann--Voigt inequality] \label{SVineq}
For any \(\mu \in \mathcal{S}\) and \(f\in \mathcal{F}\), we have 
\[\int_E f^2 \, d\mu \leq \|R_1\mu\|_\infty\,\mathcal{E}_1(f,f) \le\infty.\]
\end{proposition}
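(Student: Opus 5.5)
We may assume \(C:=\|R_1\mu\|_\infty<\infty\); otherwise the statement is trivial. By Lemma \ref{potCapalemma} we then also have \(R_1\mu\le C\) quasi-everywhere. The first reduction is to bounded, compactly supported pieces. Take, as in the proof of Lemma \ref{potCapalemma}, an increasing sequence of compact sets \(F_n\) with \(\mu_n:=1_{F_n}\mu\in\mathcal{S}_0\) and \(\mu(E\setminus\bigcup_n F_n)=0\). Since \(R_1\mu_n\le R_1\mu\), monotone convergence shows it suffices to prove the claim for each \(\mu_n\). We can also truncate \(f\), replacing it by \((f\wedge k)\vee(-k)\); this is a normal contraction, so \(\mathcal{E}_1\) does not increase, and monotone convergence in \(k\) then recovers the general case. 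After replacing \(F_n\) by \(F_n\cap\{R_1\mu_n\le C\}\) if needed, we may therefore assume \(\mu\in\mathcal{S}_0\), \(\mu\) finite, \(\|R_1\mu\|_{\infty}\le C\), and \(f\in\mathcal{F}\) bounded. In this setting \(U_1\mu=R_1\mu\in\mathcal{F}\), and \(f^2\) has a quasi-continuous version \(\tilde f^2\).

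The key step is a pointwise Cauchy--Schwarz inequality for a suitable potential representation of \(f\). Set \(g:=(1-L)f\), understood in the dual sense, so that \(f=R_1 g\) with \(R_1 g(x)=\mathbb{E}_x[\int_0^\infty e^{-t}g(X_t)\,dt]\). Cauchy--Schwarz with respect to the measure \(e^{-t}dt\,d\mathbb{P}_x\) gives
\[
f(x)^2\le \mathbb{E}_x\Big[\int_0^\infty e^{-t}dt\Big]\,\mathbb{E}_x\Big[\int_0^\infty e^{-t}g(X_t)^2dt\Big]=R_1(g^2)(x).
\]
Integrating against \(\mu\) and applying the Revuz correspondence/duality \(\int R_1h\,d\mu=\int h\,R_1\mu\,dm\) then yields
\[
\int f^2d\mu\le\int g^2R_1\mu\,dm\le C\|g\|_2^2 .
\]
The difficulty is that \(\|g\|_2^2=\|(1-L)f\|_2^2\) is not \(\mathcal{E}_1(f,f)\). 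So the plan is to apply the bound above not to \(f\) itself but through the duality expression of \(\mathcal{E}_1\), using the spectral square root. Write \(f=R_1^{1/2}h\), i.e. \(f=\int_0^\infty \Gamma(1/2)^{-1}t^{-1/2}e^{-t}P_th\,dt\) with \(h=(1-L)^{1/2}f\), so that \(\|h\|_2^2=\mathcal{E}_1(f,f)\). This kernel has total mass \(\int_0^\infty\Gamma(1/2)^{-1}t^{-1/2}e^{-t}dt=1\). Cauchy--Schwarz against the probability measure \(\Gamma(1/2)^{-1}t^{-1/2}e^{-t}dt\otimes\mathbb{P}_x\) then gives \(f(x)^2\le R^{(1/2)}(h^2)(x)\), where \(R^{(1/2)}\) denotes the subordinated resolvent. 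The remaining step is to bound \(\int R^{(1/2)}(h^2)\,d\mu\le \|R^{(1/2)}\mu\|_\infty\|h\|_2^2\). That would require \(\|R^{(1/2)}\mu\|_\infty\le\|R_1\mu\|_\infty\), which is false in general, so this attempt fails as stated.

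I would therefore switch to the duality route that keeps the constant \(1\). Using \(\mathcal{E}_1(U_1(f\mu),\varphi)=\int f\varphi\,d\mu\), I would show that \(\nu:=f\mu\) (or \(|f|\mu\)) satisfies
\[
\int f^2d\mu=\int \tilde f\,d\nu\le \mathcal{E}_1(f,f)^{1/2}\,\mathcal{E}_1(U_1\nu,U_1\nu)^{1/2},
\]
and then that
\[
\mathcal{E}_1(U_1\nu,U_1\nu)=\int U_1\nu\,d\nu\le C\int f^2d\mu .
\]
Dividing through would give the inequality. The second estimate is the pointwise step: by Cauchy--Schwarz for the measure \(e^{-t}dA^\mu_t\,d\mathbb{P}_x\),
\[
|R_1(f\mu)(x)|^2=\Big|\mathbb{E}_x\Big[\int_0^\infty e^{-t}f(X_t)\,dA^\mu_t\Big]\Big|^2\le R_1\mu(x)\,R_1(f^2\mu)(x)\le C\,R_1(f^2\mu)(x)
\]
quasi-everywhere. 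Then, with \(\nu=f\mu\),
\[
\int U_1\nu\,d\nu\le\Big(\int (U_1\nu)^2d\mu\Big)^{1/2}\Big(\int f^2d\mu\Big)^{1/2},
\]
and \(\int (U_1\nu)^2d\mu\le C\int R_1(f^2\mu)\,d\mu\). This last quantity is not obviously controlled by \(\int f^2\,d\mu\), and that is the main obstacle. To handle it, I would bootstrap: set \(I:=\int f^2d\mu\), finite since \(f\) is bounded and \(\mu\) is finite, and iterate the inequalities using symmetry, \(\int R_1(f^2\mu)\,d\mu=\int f^2R_1\mu\,d\mu\le C\,I\). This gives \(\int(U_1\nu)^2d\mu\le C^2I\), hence \(\mathcal{E}_1(U_1\nu,U_1\nu)\le C I\). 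Then \(I\le \mathcal{E}_1(f,f)^{1/2}(CI)^{1/2}\), i.e. \(I\le C\,\mathcal{E}_1(f,f)\), which is the claim. The points I would check most carefully are the quasi-everywhere justification of these Revuz and symmetry identities, in particular \(\int R_1(h\mu)\,d\mu=\int h\,R_1\mu\,d\mu\), and that \(\nu=f\mu\in\mathcal{S}_0\) as a signed measure.
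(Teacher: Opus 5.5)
Your final duality argument is correct and is essentially the paper's proof. It uses the same three ingredients: the $\mathcal{E}_1$-Cauchy--Schwarz inequality against $U_1$ of a measure of the form $h\mu$, the same pointwise bound $(R_1(h\mu))^2\le R_1\mu\,R_1(h^2\mu)$, and the same symmetry identity $\int R_1(h^2\mu)\,d\mu=\int h^2R_1\mu\,d\mu$. The only difference is that you take $h=f$, i.e.\ the test function $\phi=f/\|f\|_{L^2(\mu)}$, and absorb $I=\int f^2\,d\mu$ in a single step. The paper instead takes a supremum over $\phi\in L^2(\mu)$ and approximates $f$ by $f_\beta=\beta R_{\beta+1}f=R_1g_\beta$; since $f\in\mathcal{F}$ can be paired directly with $U_1(f\mu)$, your version makes that approximation unnecessary.
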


\begin{proof} 
By using the monotone convergence theorem, it is enough to show the inequality for a bounded and nonnegative $f\in \dom$ and $\mu \in {\cal S}_{00}$. For $\beta>0$, we set $f_\beta:=\beta R_{\beta+1} f$. It is known that $f_\beta \to f$ with respect to both the $\form_1$-norm and the $L^2(\mu)$-norm. 

Before estimating the specific norms, we first establish a general pointwise inequality. 
For any $\nu \in {\cal S}_{00}$ and a nonnegative Borel function $h \ge 0$, the potential $R_1(h \nu)$ can be pointwise bounded by the Cauchy-Schwarz inequality via its probabilistic representation:
\begin{align}
\big(R_1(h \nu)(x)\big)^2
& =\Big({\mathbb E}_x\Big[ \int_0^\infty e^{-s} h(X_s)dA_s^\nu\Big]\Big)^2  \nonumber \\
& \le {\mathbb E}_x\Big[ \int_0^\infty e^{-s} h(X_s)^2dA_s^\nu\Big] \, {\mathbb E}_x\Big[ \int_0^\infty e^{-s} dA_s^\nu\Big]  \nonumber \\
& =R_1(h^2\nu)(x) \, U_1\nu(x). \label{eq:potential_CS}
\end{align}
Note that \(R_1\nu =  U_1\nu\) holds for \(\nu \in \mathcal{S}_{00}\). Now, noting that $R_1(\beta(f-\beta R_{\beta+1}f))=\beta R_{\beta+1}f = f_\beta$, we set $g_\beta:=\beta(f-f_\beta) \ge 0$ so that $f_\beta = R_1g_\beta$. To estimate the $L^2(\mu)$-norm of $R_1g_\beta$, we use the duality expression:
\begin{align*}
\int_E f_\beta^2 d\mu = \int_E (R_1g_\beta)^2 d\mu 
& = \sup_{\|\phi\|_{L^2(\mu)}\le 1} \Big( \int_E R_1g_\beta \, \phi \, d\mu \Big)^2 \\
&\le  \sup_{\|\phi\|_{L^2(\mu)}\le 1} \Big( \int_E R_1g_\beta \, |\phi| \, d\mu \Big)^2 \\
& = \sup_{\|\phi\|_{L^2(\mu)}\le 1} \Big( \int_E g_\beta  R_1(|\phi| \mu) \, dm \Big)^2,
\end{align*}
where we used the Revuz correspondence in the last equality. By the Cauchy-Schwarz inequality for the $\form_1$-inner product and the monotone convergence theorem, we have
\begin{align*}
\Big(\int_E g_\beta  R_1(|\phi| \mu) \, dm\Big)^2 &= \lim_{n\to \infty} \Big(\int_E g_\beta  R_1((|\phi|\wedge n) \mu) \, dm\Big)^2 \\
& = \lim_{n\to \infty} \form_1(R_1g_\beta,  U_1((|\phi|\wedge n)\mu))^2 \\
&\le \lim_{n\to \infty} \form_1(R_1g_\beta, R_1g_\beta)  \form_1 (U_1((|\phi|\wedge n) \mu),U_1((|\phi|\wedge n) \mu)) \\
& = \lim_{n\to \infty} \Big(\int_E g_\beta R_1g_\beta \, dm \Big) \Big( \int_E U_1((|\phi|\wedge n) \mu) |\phi| \, d\mu\Big)\\
& =\Big(\int_E g_\beta R_1g_\beta \, dm \Big) \Big( \int_E R_1(|\phi| \mu) |\phi| \, d\mu\Big).
\end{align*}
For the second integral on the right-hand side, applying the general inequality \eqref{eq:potential_CS} with $h=|\phi|$ and $\nu=\mu$, we can further estimate it as follows:
\begin{align*}
\int_E R_1(|\phi| \mu) |\phi| \, d\mu  
& \le \Big(\int_E \big(R_1(|\phi|\mu)\big)^2 d\mu\Big)^{1/2}  \Big(\int_E\phi^2 d\mu\Big)^{1/2} \\
& \le \Big(\int_E R_1(\phi^2 \mu) \,  U_1\mu \, d\mu\Big)^{1/2} \cdot 1  \\
& \le \|U_1\mu\|_\infty^{1/2} \Big( \int_E R_1(\phi^2\mu) \, d\mu\Big)^{1/2} \\
&= \|U_1\mu\|_\infty^{1/2}   \Big(\int_E U_1\mu \  \phi^2  d\mu\Big)^{1/2} \\
&\le  \|U_1\mu\|_\infty \|\phi\|_{L^2(\mu)}\  \le \  \|U_1\mu\|_\infty. 
\end{align*}
Therefore we find that
\begin{align}
\label{eq:2}
\int_E f_\beta^2 d\mu \le \|U_1\mu\|_\infty \int_E g_\beta R_1 g_\beta \, dm
\end{align}
holds for any $\beta>0$. Since $f_\beta=\beta R_{\beta+1}f  \to f$ in $L^2(\mu)$
as $\beta \to \infty$, the left-hand side converges to $\int_E f^2 d\mu$.
On the other hand, the integral term on the right-hand side is equal to
$\form_1(\beta R_{\beta+1}f, \beta R_{\beta+1}f)$,
which converges to $\mathcal{E}_1(f, f)$ by the spectral representation. Consequently, we conclude that
\begin{align}
\label{eq:6}
\int_E f^2 d\mu \le \|U_1 \mu\|_\infty \mathcal{E}_1(f, f).
\end{align}
This proof is complete.
\end{proof}

\begin{proof}[Proof of Theorem \ref{Thm_loc}]
For any \(\mu \in (\mathcal{S}_R)_{loc}\) and a compact set \(K\),  \(1_K\mu \in \mathcal{S}_R\) holds and so \(\mu(K)=1_K\mu(K)<\infty\) and \(\mu\) is smooth. Then we take a compact nest \(\{F_k\}_k\) such that \(\mu(\cap_k F_k^c)=0\) and \(1_{F_k}\mu \in \mathcal{S}_0.\) For any Borel set \(A\) with \(\mu(A)<\infty\), we take \(k\) such that \(\mu(A)-\mu(A\cap F_k) \leq \varepsilon\). By using the inner regularity for \(1_{F_k}\mu \in \mathcal{S}_R\), we can take a compact set \(K \subset A\cap F_k\) such that \(\mu(A\cap F_k)-\mu(K) \leq \varepsilon.\) Then we have \(\mu(A)-\mu(K)\leq 2\varepsilon\). For a Borel set \(A\) with \(\mu(A)=\infty\) and any \(M>0\), we take large \(k\) such that \(\mu(A \cap F_k)\geq 2M\) and we can choose a compact set \(K\subset A\cap F_k\) with \(\mu(K)\geq M\). Hence the inner regularity holds for \(\mu\) and so \((\mathcal{S}_R)_{loc} \subset \mathcal{S}_R\) holds. Clearly \((\mathcal{S}_R)_{loc} \supset \mathcal{S}_R\) holds, so \((\mathcal{S}_R)_{loc} = \mathcal{S}_R\).

Since \(\mathcal{D}\supset \mathcal{D}_0 \supset \mathcal{S}_{00} \supset \mathcal{D}_{00}\), it holds that \(\mathcal{D}_{loc}\supset (\mathcal{D}_0)_{loc} \supset (\mathcal{S}_{00})_{loc}\supset (\mathcal{D}_{00})_{loc}\) follows from Proposition \ref{Prop_loccol_1}. Since \(\mathcal{D}_{00} = \mathcal{S}_{00} \cap \mathcal{D}\), it holds that \((\mathcal{D}_{00})_{loc} = (\mathcal{S}_{00})_{loc} \cap \mathcal{D}_{loc}\) and so \((\mathcal{S}_{00})_{loc} = (\mathcal{D}_{00})_{loc}\).

Since a finite measure on a locally compact separable metric space is Radon, \(\mathcal{S}_R \supset \mathcal{S}_F\) holds and so \((\mathcal{S}_R)_{loc} \supset (\mathcal{S}_F)_{loc}\). For any compact set \(K\) and \(\mu \in \mathcal{S}_R\), \(1_K \mu \in \mathcal{S}_R\) is a finite measure, so  \(\mu \in (\mathcal{S}_F)_{loc}\), and  \((\mathcal{S}_R)_{loc} = (\mathcal{S}_F)_{loc}\) holds. Since \(\mathcal{S}_R \supset \mathcal{S}_0 \supset \mathcal{S}_{00}\), it holds that \((\mathcal{S}_R)_{loc} \supset (\mathcal{S}_0)_{loc} \supset (\mathcal{S}_{00})_{loc}\).

For \(\mu \in (\mathcal{D}_0)_{loc}\) and any compact set \(K\), since \(1_K \mu \in \mathcal{D}\cap \mathcal{S}_0\) holds, by taking \(f \in \mathcal{F}\cap C_c\) satisfying \(f=1\) on \(K\), we have \(1_K\mu(E)=\mu(K) \leq \int_K f d\mu \leq C_K \mathcal{E}_1(f,f) <\infty.\) Hence \(1_K\mu \in \mathcal{S}_{00}\) and we have \((\mathcal{S}_{00})_{loc}\supset (\mathcal{D}_0)_{loc}\). For \(\mu \in \mathcal{D}_{loc}\) and any compact set \(K\), by the Stollmann-Voigt inequality (Proposition \ref{SVineq}), we have 
\[\int_K |f|d\mu \leq \sqrt{\mu(K)} \sqrt{\int_K |f|^2d\mu} \leq  \sqrt{\mu(K) \|R_1(1_K\mu)\|_\infty }\sqrt{\mathcal{E}_1(f,f)} \]
for any \(f\in \mathcal{F}\cap C_c\), and so \(\mathcal{D}_{loc} \subset (\mathcal{S}_{00})_{loc}.\) Hence \((\mathcal{S}_{00})_{loc} \supset \mathcal{D}_{loc} \supset (\mathcal{D}_0)_{loc} \supset (\mathcal{D}_{00})_{loc} = (\mathcal{S}_{00})_{loc}.\)

It is clear that \(\mathcal{D}_{loc} \supset \mathcal{K}_{loc}\supset(\mathcal{K}_0)_{loc}\supset(\mathcal{K}_{00})_{loc}\). For any \(\mu \in \mathcal{K}_{loc}\) and a compact set \(K\), it holds that \(1_K\mu \in \mathcal{K}\). Since \(\mu\) is a Radon measure by the Stollmann-Voigt inequality (Proposition \ref{SVineq}), \(1_K\mu\) is a finite measure and so \(1_K\mu \in \mathcal{S}_{00}\) by Lemma \ref{K00isS00}. Hence \(\mathcal{K}_{loc}\subset(\mathcal{K}_{00})_{loc}\) holds.

Since \(\mathcal{K}_\infty \subset \mathcal{K}\), we have \((\mathcal{K}_\infty)_{loc} \subset \mathcal{K}_{loc}\). For any \(\mu \in \mathcal{K}_{loc}\) and any compact set \(K\), it holds that \(\|R_1(1_{K^c}1_K\mu)\|_\infty = 0\) and so \(1_K\mu \in \mathcal{K}_\infty\). Hence we have  \((\mathcal{K}_\infty)_{loc} = \mathcal{K}_{loc}\).
\end{proof}

We call an increasing sequence \(\{F_k\}_k\) of closed sets {\it a strong nest} if
$$
\mathbb{P}_x \left( \lim_{k \to \infty} \sigma_{E \setminus F_k} < \infty \right) = 0 \quad \text{for q.e. } x \in E.
$$
Note that a strong nest is a nest (see \cite[pp. 94--95]{CF12}).  The following lemma is used in the proof of \cite[Theorem 2.4]{AM92}. Here, we provide a proof using a different approach based on the Fukushima decomposition.

\begin{lemma}\label{S00isalmostK00}
For \(\mu \in \mathcal{S}_{00}\), \(\mathbb{E}_x[A_t^\mu]\) converges to \(0\) quasi-uniformly as \(t \to 0\). That is,  there exists a strong nest \(\{G_k\}_k\) such that \(\mathbb{E}_x[A_t^\mu]\) converges to \(0\) uniformly on each set \(G_k\).
\end{lemma}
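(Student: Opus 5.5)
For $\mu\in\mathcal{S}_{00}$, the function $u:=U_1\mu\in\mathcal{F}$ is bounded and has a quasi-continuous version $\tilde u = R_1\mu$ q.e. The plan is to write $\mathbb{E}_x[A^\mu_t]$ in terms of $u$ and then get quasi-uniform convergence from the Fukushima decomposition of $\tilde u$. By the Markov property, exactly as in the proof of Lemma~\ref{lemdefKato},
\[
\tilde u(x)-e^{-t}P_t\tilde u(x)=\mathbb{E}_x\Big[\int_0^t e^{-s}\,dA^\mu_s\Big]\ \ge\ e^{-t}\,\mathbb{E}_x[A^\mu_t],
\]
so $\mathbb{E}_x[A^\mu_t]\le e^{t}\big(\tilde u(x)-P_t\tilde u(x)\big)+(e^t-1)\|u\|_\infty$ for q.e.\ $x$. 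The second term tends to $0$ uniformly. It therefore suffices to show that $\tilde u-P_t\tilde u=\mathbb{E}_\cdot[\tilde u(X_0)-\tilde u(X_t)]\to0$ quasi-uniformly as $t\to0$.

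The Fukushima decomposition gives $\tilde u(X_t)-\tilde u(X_0)=M^{[u]}_t+N^{[u]}_t$. In this case it is explicit: $N^{[u]}_t=\int_0^t \tilde u(X_s)\,ds-A^\mu_t$. Hence
\[
\mathbb{E}_x[\tilde u(X_0)-\tilde u(X_t)]=\mathbb{E}_x[A^\mu_t]-\mathbb{E}_x\Big[\int_0^t\tilde u(X_s)\,ds\Big],
\]
which is circular. So I will not use the decomposition for the expectation itself. Instead I will use the path regularity it provides. The process $t\mapsto\tilde u(X_t)$ is right-continuous at $t=0$, $\mathbb{P}_x$-a.s., for q.e.\ $x$, since $\tilde u$ is quasi-continuous and $X$ is right-continuous (\cite[Theorem 4.2.2]{FOT11}). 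Boundedness of $u$ then gives pointwise convergence $\mathbb{E}_x[|\tilde u(X_t)-\tilde u(x)|]\to0$ for q.e.\ $x$, by bounded convergence. The real task is to upgrade this to uniform convergence on the sets of a strong nest.

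For the upgrade I would use the maximal/energy estimate for the martingale part together with the additive functional. Set $\phi_t(x):=\mathbb{E}_x[\sup_{s\le t}|\tilde u(X_s)-\tilde u(X_0)|]$. The approach is to show that for a sequence $t_n\downarrow0$ the functions $\phi_{t_n}$ are quasi-continuous (or at least excessive-dominated), and that $\phi_{t_n}\to0$ in a capacity-controlled sense, e.g.\ $\sum_n \Capa(\{\phi_{t_n}>\varepsilon_n\})<\infty$. Once that holds, Borel--Cantelli for capacity yields closed sets $G_k=\bigcap_{n\ge k}\{\phi_{t_n}\le\varepsilon_n\}$ minus exceptional sets, and these form a nest on which the convergence is uniform. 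Monotonicity of $t\mapsto\mathbb{E}_x[A^\mu_t]$ then allows passage from the sequence $t_n$ to all $t\to0$. To control the capacity of $\{\phi_{t}>\varepsilon\}$, I would bound $\phi_t\le$ a $1$-excessive function $h_t$ whose $\mathcal{E}_1$-norm tends to $0$, and use $\Capa(\{h>\varepsilon\})\le\varepsilon^{-2}\mathcal{E}_1(h,h)$. The natural candidate is $h_t=R_1\big(|u-P_tu|\big)$-type expressions, or the $1$-excessive regularization of $\mathbb{E}_x[A^\mu_t]$ itself, namely $w_t:=R_1\mu-e^{-t}P_tR_1\mu$. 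Each $w_t$ lies in $\mathcal{F}$ with $\mathcal{E}_1(w_t,w_t)\to0$ as $t\to0$, since $e^{-t}P_t u\to u$ in $\mathcal{E}_1$ by the spectral theorem. Also $w_t\ge0$ q.e., but $w_t$ is not excessive, so capacity of its superlevel sets must be controlled via its quasi-continuous version: $\Capa(\{|\tilde w_t|>\varepsilon\})\le \varepsilon^{-2}\mathcal{E}_1(w_t,w_t)$ holds for any $w\in\mathcal{F}$ (\cite[Theorem 2.1.5]{FOT11}). This gives a direct route: choose $t_n$ with $\mathcal{E}_1(w_{t_n},w_{t_n})\le 4^{-n}\cdot 4^{-n}$, so that $\Capa(\{\tilde w_{t_n}>2^{-n}\})\le 4^{-n}$. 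This produces a nest of closed sets on which $\tilde w_{t_n}\le2^{-n}$ for $n\ge k$, and hence, by the first display, uniform convergence of $\mathbb{E}_x[A^\mu_{t_n}]$ there.

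The main obstacle is promoting the nest obtained by capacity (a generalized nest) to a \emph{strong} nest. For that I would use the fact that the sets $\{\tilde w_{t_n}>2^{-n}\}$ can be taken quasi-open and dominated by $1$-excessive functions $e_n$, namely the $1$-equilibrium potentials, with $\sum_n\mathcal{E}_1(e_n,e_n)<\infty$. Then $\mathbb{P}_x(\sigma_{E\setminus G_k}<\infty)$ is controlled by $\sum_{n\ge k}\mathbb{E}_x[e^{-\sigma_n}]\le\sum_{n\ge k}e_n(x)$. Since this series converges q.e., letting $k\to\infty$ gives the strong nest property up to the factor $e^{-\sigma}$, and hence on each finite time horizon. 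A second, routine point is to interpolate between the times $t_n$ using monotonicity of $t\mapsto\mathbb{E}_x[A^\mu_t]$.
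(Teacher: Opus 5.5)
Your final route, the $w_t=U_1\mu-e^{-t}P_tU_1\mu$ argument, is correct and is essentially the paper's proof. The paper derives the identity $\mathbb{E}_x[A^\mu_t]=U_1\mu(x)-P_tU_1\mu(x)+\int_0^tP_sU_1\mu(x)\,ds$ from the Fukushima decomposition; this is exactly the display you dismissed as circular, and the circularity is broken just as in your route, by getting quasi-uniform convergence of $P_{t_m}U_1\mu\to U_1\mu$ from $\mathcal{E}_1$-convergence. It then cites \cite[Theorem 3.5.4]{CF12} for the strong-nest extraction, which you do by hand (capacitary inequality, Borel--Cantelli, $1$-equilibrium potentials), before the same monotonicity argument in $t$.

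One point to tighten: "quasi-open" is not enough to make $G_k$ closed. By outer regularity of capacity, enlarge $\{\tilde w_{t_n}>2^{-n}\}$, together with the exceptional set, to open sets $O_n$ with $\mathrm{Cap}(O_n)\le 2\cdot 4^{-n}$, so that $G_k=E\setminus\bigcup_{n\ge k}O_n$ is closed. Then $\mathbb{E}_x[e^{-\lim_k\sigma_{E\setminus G_k}}]\le\lim_k\sum_{n\ge k}e_{O_n}(x)=0$ for q.e.\ $x$, which gives the strong nest property directly.
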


\begin{proof}
Take \(\mu \in \mathcal{S}_{00}\). Applying the Fukushima decomposition to  the potential  $U_1\mu$ (cf. \cite[Lemma 5.4.1]{FOT11}), there exists a martingale additive functional \(M_t^{[U_1\mu]}\) such that
\[U_1\mu(X_t)-U_1\mu(x) = M_t^{[U_1\mu]} +\int_0^t  U_1\mu(X_s)ds -A_t^\mu, \ \ \mathbb{P}_x\textrm{-a.s.,\ for\ q.e.}  \ x.\]
Here, $U_1\mu$  is a quasi-continuous version of \(U_1\mu\). By taking an expectation  with respect to $\mathbb{P}_x$ , we have
\begin{equation} \label{eq:FUkudeco}
P_t U_1\mu(x)-U_1\mu(x) = \int_0^t P_s U_1\mu(x)ds -\mathbb{E}_x[A_t^\mu] \ \ \textrm{for\ q.e.} \ x. 
\end{equation}
By \cite[Proposition 3.1.9]{CF12}, \(P_s U_1\mu\) is also quasi-continuous for each $s$ and noting the equality of the $m$-essential sup norm and the quasi-sup norm, we  have
$$
\left| \int_0^t P_s U_1\mu (x)ds \right| \leq t \|U_1\mu \|_\infty \textrm{\ \ q.e.}  \ x.
$$
Thus, \(\int_0^t P_s U_1\mu(x) ds\) converges to \(0\) uniformly on \(E\) (except on a set of zero capacity) as \(t \downarrow 0\).

On the other hand, since \(P_t U_1\mu\) converges to \(U_1\mu\) with respect to $\sqrt{\mathcal{E}_1}$-norm as \(t \downarrow 0\), there exist a sequence \(t_m \downarrow 0\) and a strong nest \(\{G_k\}_k\) such that $P_{t_m} U_1\mu$ converges to $U_1\mu$ uniformly on each \(G_k\) (cf. \cite[Theorem 3.5.4]{CF12}). 
Combining this with \eqref{eq:FUkudeco}, we see that \(\mathbb{E}_x[A_{t_m}^\mu]\) converges to \(0\) uniformly on each \(G_k\) as \(m \to \infty\). Since the PCAF \(A_t^\mu\) is non-decreasing in \(t\), the expectation \(\mathbb{E}_x[A_t^\mu]\) also converges to \(0\) uniformly on each \(G_k\) for the continuous parameter \(t\).
\end{proof}
\begin{proof}[Proof of Theorem \ref{Thm_col}]
By \cite[Theorem 2.2.4]{FOT11} and \cite[Theorem 2.3.15]{CF12}, \(\mathcal{S}=(\mathcal{S}_0)_{col} = (\mathcal{S}_{00})_{col}\). Since \(\mathcal{S}_R \supset \mathcal{S}_0\) and \(\mathcal{S}_F \supset \mathcal{S}_{00}\), \(\mathcal{S}=(\mathcal{S}_R)_{col} = (\mathcal{S}_F)_{col}\) follows from Proposition \ref{Prop_loccol_1}.

While the identity $\mathcal{K}_{loc} = (\mathcal{K}_0)_{loc} = \mathcal{S}$ is essentially proved in \cite[Theorem 3.3]{ABM91}, we provide a proof here for the reader's convenience.

For \(\mu \in \mathcal{S} = (\mathcal{S}_{00})_{col}\), we take \(\{F_n\}_n\) satisfying \(\mu \in \mathcal{S}_{00}(\{F_n\})\). By Lemma \ref{S00isalmostK00}, for each \(n\), we take a strong nest \(\{G_k^{(n)}\}\) such that \(\mathbb{E}_x[A_t^{1_{F_n} \mu}]\) converges to \(0\) uniformly on each set \(G_k^{(n)}\) as \(t\to 0\). 
 For each $n \in \mathbb{N}$, we define $G_n := \bigcup_{k=1}^n (F_k \cap G_k^{(n)})$. Then, each $G_n$ is a compact set and $\{G_n\}$ forms a nest satisfying \(\mu(E\setminus \bigcup_{n=1}^\infty G_n)=0\). Moreover, we have 
\[\lim_{t \to 0} \sup_{x\in G_n} \mathbb{E}_x[A^{1_{G_n}\mu} _t] \leq \lim_{t \to 0} \sup_{x\in F_n} \mathbb{E}_x[ A^{1_{F_n}\mu}_t] =0\]
and, by the strong Markov property, for any \(x\in G_n^c\),
\begin{eqnarray*}
\mathbb{E}_x[ A^{1_{G_n}\mu}_t] &=& \mathbb{E}_x\left[1_{\{\sigma(G_n)<t\}}\ \int_{\sigma(G_n)}^t 1_{G_n}(X_s) dA^\mu _t \right] \\
&=&\mathbb{E}_x\left[1_{\{\sigma(G_n)<t\}}\ \mathbb{E}_{X_{\sigma(G_n)}}[A^{1_{G_n}\mu}_t]\right]\\
& \leq &\sup_{x\in G_n} \mathbb{E}_x[A^{1_{G_n}\mu}_t],
\end{eqnarray*}
where \(\sigma(G_n):=\inf\{t>0 : X_t \in G_n\}\) is the first hitting time to \(G_n\).
Hence we have \[\lim_{t \to 0} \sup_{x\in E} \mathbb{E}_x[A^{1_{G_n}\mu} _t]  =0\]
and so \(\mu \in \mathcal{K}_{00}\). This means that \(\mathcal{S}\subset (\mathcal{K}_{00})_{col}\). Since \(\mathcal{K}_{00} \subset \mathcal{S}_{00}\), by Proposition \ref{Prop_loccol_1}, \(\mathcal{S} = (\mathcal{K}_{00})_{col}\) holds. Since \(\mathcal{K}_{00} \subset \mathcal{K}_0 \subset \mathcal{K}\) and \(\mathcal{K}_{00} \subset \mathcal{D}_{00} \subset \mathcal{D}_0 \subset \mathcal{D}\), it holds that \(\mathcal{S}=\mathcal{D}_{col}=(\mathcal{D}_0)_{col}=(\mathcal{D}_{00})_{col}=\mathcal{K}_{col}= (\mathcal{K}_0)_{col}=(\mathcal{K}_{00})_{col}\).

By Proposition \ref{Prop_loccol_3} and Theorem \ref{Thm_loc}, we have
\[(\mathcal{K}_\infty)_{col}=((\mathcal{K}_\infty)_{loc})_{col}=(\mathcal{K}_{loc})_{col}=(\mathcal{K}_{col})_{loc}=\mathcal{S}_{loc}=\mathcal{S}.\]
\end{proof}

At the end of this section, we introduce subclasses \(\mathcal{T}\) satisfying \(\mathcal{S} \not = \mathcal{T}_{col}\) in general, as shown in the following propositions.

Let \(\mathcal{S}_{AC}\) be the set of all positive Borel measures charging no set of zero capacity, which are absolutely continuous with respect to \(m\). 
\begin{proposition}
It holds that \((\mathcal{S}_{AC})_{loc} = (\mathcal{S}_{AC})_{col} = \mathcal{S}_{AC}\).
\end{proposition}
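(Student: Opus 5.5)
We must show that $(\mathcal{S}_{AC})_{loc}=(\mathcal{S}_{AC})_{col}=\mathcal{S}_{AC}$. The plan is to prove the chain
\[\mathcal{S}_{AC}\subset(\mathcal{S}_{AC})_{loc}\subset(\mathcal{S}_{AC})_{col}\subset\mathcal{S}_{AC}.\]

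\emph{First two inclusions.} If $\mu\in\mathcal{S}_{AC}$ and $K$ is compact, then $1_K\mu$ is again absolutely continuous with respect to $m$. It also charges no set of zero capacity. So $\mathcal{S}_{AC}$ is stable under restriction to compact sets, and Proposition~\ref{Prop_loccol_3}~(1) gives $\mathcal{S}_{AC}\subset(\mathcal{S}_{AC})_{loc}\subset(\mathcal{S}_{AC})_{col}$.

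Note one subtlety. By definition $\mathcal{S}_{AC}$ is not assumed to lie in $\mathcal{S}$, whereas $\mathcal{T}_{loc}$ requires $\mu\in\mathcal{S}$. I would therefore first check that $\mathcal{S}_{AC}\subset\mathcal{S}$, or else read $\mathcal{S}_{AC}$ as consisting of smooth measures, since the paper states $\mathcal{T}\subset\mathcal{S}$ throughout this section. If genuine smoothness is needed, the inclusion $\mathcal{S}_{AC}\subset(\mathcal{S}_{AC})_{loc}$ is simply the stability under restriction combined with smoothness of $\mu$.

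\emph{Last inclusion.} Let $\mu\in(\mathcal{S}_{AC})_{col}$. By definition $\mu$ charges no set of zero capacity, and there is $\{F_k\}\in\nest$ with $1_{F_k}\mu\ll m$ for every $k$. Take a Borel set $A$ with $m(A)=0$. Then $\mu(A\cap F_k)=0$ for each $k$, and monotone convergence gives
\[\mu\Bigl(A\cap\bigcup_k F_k\Bigr)=0.\]

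The main point, and the only nontrivial step, is to control the remainder $N:=E\setminus\bigcup_k F_k$. For every compact $K$ we have $\Capa(K\cap N)\le\Capa(K\setminus F_k)\to0$, so $\Capa(K\cap N)=0$. Writing $E=\bigcup_n\overline{U}_n$ with relatively compact open sets $U_n$ (as in the remark following the definition of $\mathcal{T}_{col}$), countable subadditivity of capacity gives $\Capa(N)=0$. Since $N$ is Borel and $\mu$ charges no set of zero capacity, $\mu(N)=0$, hence $\mu(A)=0$ and $\mu\ll m$. Together with the zero-capacity condition this gives $\mu\in\mathcal{S}_{AC}$, which closes the chain and proves all three equalities.
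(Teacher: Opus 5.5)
Your proof is correct and follows the paper's route. The paper proves $(\mathcal{S}_{AC})_{col}\subset\mathcal{S}_{AC}$ via $\mu(A)=\lim_k\mu(A\cap F_k)=0$ and then invokes Proposition~\ref{Prop_loccol_3}. Your capacity argument showing $\mu(E\setminus\bigcup_k F_k)=0$ makes explicit the step that this limit uses silently.

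On the subtlety you raise, the smooth reading is the one that must be adopted. Read literally, $\infty\cdot m$ lies in $\mathcal{S}_{AC}$ but is not smooth, so it cannot lie in $(\mathcal{S}_{AC})_{loc}$. Under the smooth reading, the smoothness of $\mu\in(\mathcal{S}_{AC})_{col}$ is supplied by the paper's remark that $\mathcal{T}_{col}\subset\mathcal{S}$ whenever $\mathcal{T}\subset\mathcal{S}$.
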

\begin{proof}
For any \(\mu \in (\mathcal{S}_{AC})_{col}\), we take some compact nest \(\{F_k\}_k\) such that \(1_{F_k}\mu \in \mathcal{S}_{AC}\). For any set \(A\) with \(m(A)=0\), we have 
\[\mu(A)= \lim_{k\to \infty} (1_{F_k}\mu)(A) = 0\]
and so \(\mathcal{S}_{AC} = (\mathcal{S}_{AC})_{col}\). By Proposition \ref{Prop_loccol_3}, \(\mathcal{S}_{AC} = (\mathcal{S}_{AC})_{loc}\) holds.
\end{proof}

Let \(\mathcal{S}_{SG}\) be the set of all positive Borel measures charging no set of zero capacity, which are singular with respect to \(m\). 
\begin{proposition}
It holds that \((\mathcal{S}_{SG})_{loc} = (\mathcal{S}_{SG})_{col} = \mathcal{S}_{SG}\).
\end{proposition}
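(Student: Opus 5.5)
The plan mirrors the proof for \(\mathcal{S}_{AC}\): show \((\mathcal{S}_{SG})_{col} \subset \mathcal{S}_{SG}\), then invoke Proposition \ref{Prop_loccol_3}. The hypothesis of that proposition holds: if \(\mu\) is singular with respect to \(m\) and charges no set of zero capacity, then so does \(1_K\mu\) for any compact \(K\). Indeed, if \(\mu\) is concentrated on a Borel set \(N\) with \(m(N)=0\), then \(1_K\mu\) is concentrated on \(N\cap K\).

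Take \(\mu \in (\mathcal{S}_{SG})_{col}\) and a compact nest \(\{F_k\}_k\) with \(1_{F_k}\mu \in \mathcal{S}_{SG}\) for each \(k\). For each \(k\), choose a Borel set \(N_k\) with \(m(N_k)=0\) such that \(1_{F_k}\mu(E\setminus N_k)=0\). Set
\[N := \bigcup_k (N_k \cap F_k) \cup \Big(E\setminus \bigcup_k F_k\Big).\]
The first part is \(m\)-null as a countable union of null sets. For the second part, \(\Capa(K\setminus \bigcup_k F_k) \le \Capa(K\setminus F_k) \to 0\) for every compact \(K\). Exhausting \(E\) by countably many compact sets shows that \(E\setminus\bigcup_k F_k\) has zero capacity, hence \(m\)-measure zero, since zero capacity implies \(m\)-null (as used in Lemma \ref{potCapalemma}).

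It remains to check that \(\mu(E\setminus N)=0\). We have \(E\setminus N \subset \bigcup_k (F_k\setminus N_k)\), and \(\mu(F_k\setminus N_k) = 1_{F_k}\mu(E\setminus N_k)=0\). Therefore \(\mu\) is concentrated on the \(m\)-null set \(N\), i.e. singular with respect to \(m\). Since \(\mu\) charges no set of zero capacity by the definition of \(\mathcal{T}(\{F_k\})\), we get \(\mu\in\mathcal{S}_{SG}\).

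Together with Proposition \ref{Prop_loccol_3}(1), this gives \(\mathcal{S}_{SG}\subset(\mathcal{S}_{SG})_{loc}\subset(\mathcal{S}_{SG})_{col}\subset\mathcal{S}_{SG}\), which proves the claim. The only point needing care is the set \(E\setminus\bigcup_k F_k\). It could carry \(\mu\)-mass a priori, but it is \(m\)-null because it has zero capacity, so including it in \(N\) is harmless. Alternatively, one can note that \(\mu\) charges no set of zero capacity, so \(\mu(E\setminus\bigcup_k F_k)=0\) directly.
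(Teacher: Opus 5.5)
Your proof is correct and has the same skeleton as the paper's. You collect the $m$-null sets attached to the restrictions $1_{F_k}\mu$ into a single countable union, show that it carries $\mu$, and conclude with Proposition~\ref{Prop_loccol_3}; you also check its restriction hypothesis explicitly, which the paper leaves implicit.

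Where you differ is in showing that $\mu$ is carried by the null set, and there your version is the sounder one. The paper bounds $\mu(A^c)\le\mu(F_k\setminus A_k)+\mu(F_k^c)$ and then uses $\varlimsup_k\mu(F_k^c)=0$. That step needs $\mu(F_k^c)<\infty$ for some $k$ in order to use continuity from above, so it fails for infinite measures. For example, take $\mu=\sum_{n\in\mathbb{Z}}\delta_n$ for one-dimensional Brownian motion with the compact nest $F_k=[-k,k]$; then $\mu(F_k^c)=\infty$ for all $k$. Your argument needs no finiteness at all. The inclusion $E\setminus N\subset\bigcup_k(F_k\setminus N_k)$ exhibits $E\setminus N$ inside a countable union of $\mu$-null sets. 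The set $E\setminus\bigcup_kF_k$ has zero capacity, hence is both $m$-null and $\mu$-null.

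One small addition is needed at the end. $\mathcal{S}_{SG}$ has to be read as a subclass of $\mathcal{S}$; otherwise $\mathcal{S}_{SG}\subset(\mathcal{S}_{SG})_{loc}$ cannot hold, since $\mathcal{T}_{loc}\subset\mathcal{S}$ by definition. So you should also note that $\mu$ is smooth. This follows from the paper's general remark that $\mathcal{T}_{col}\subset\mathcal{S}$ whenever $\mathcal{T}\subset\mathcal{S}$.
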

\begin{proof}
For any \(\mu \in (\mathcal{S}_{SG})_{col}\), we take some compact nest \(\{F_k\}_k\) such that \(1_{F_k}\mu \in \mathcal{S}_{SG}\) for each $k$. There exists a sequence of sets \(\{A_k\}_k\) with \(m(A_k)=0\) and \(\mu(F_k\setminus A_k)=0\). We set \(A:=\bigcup_{k=1}^\infty A_k\) then we have
\[m(A)\leq \sum_{k=1}^\infty m(A_k) = 0\]
and 
\[\mu(A^c) \leq \varlimsup_{k\to \infty} \mu(A_k^c) \leq  \varlimsup_{k\to \infty} \mu(F_k \setminus A_k) + \varlimsup_{k\to \infty} \mu(F_k^c) = 0,\]
and so \(\mathcal{S}_{SG} = (\mathcal{S}_{SG})_{col}\). By Proposition \ref{Prop_loccol_3}, \(\mathcal{S}_{SG} = (\mathcal{S}_{SG})_{loc}\) holds.
\end{proof}

The above two propositions mean that, in general, the set of all smooth measures cannot be approximated by only one of the spaces of smooth measures absolutely continuous or singular with respect to the underlying measure. Since \(m \in \mathcal{S}_{AC}\), \(\mathcal{S}_{AC}\) is not empty set. However, $\mathcal{S}_{SG}$ may be empty, hence $\mathcal{S}_{AC} = \mathcal{S}$. This occurs, for example, in the case of a process on a graph.

Denote by $L^0(E;m)$ the space of all $m$-measurable functions on $E$ up to $m$-equivalence. For a linear subspace $L$ of $L^0(E;m)$, we define
 \[\mathcal{S}_{L} := \{\mu \in \mathcal{S}_{AC} : \text{there\ exists\ }f\in L \text{\ such\ that\ }d\mu =fdm.\} \]
and \(L_{loc} := \{f \in L^0(E;m) : 1_Kf \in L \text{\ for\ any\ compact\ set\ }K\}.\)

\begin{proposition}
Suppose that $1_K f \in L_{loc}$ for each $f \in L$ and each compact set $K$. Then, the identity $(\mathcal{S}_L)_{loc} = \mathcal{S}_{L_{loc}}$ holds.
\end{proposition}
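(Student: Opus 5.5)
We prove the two inclusions directly from the definitions. The identity to aim for is that $1_K\mu = (1_K f)\,m$ whenever $d\mu = f\,dm$, so membership of a restricted measure in $\mathcal{S}_L$ becomes membership of a restricted density in $L$. The only subtlety is that $\mathcal{S}_L$ asks for \emph{some} density in $L$. Since $L\subset L^0(E;m)$ consists of $m$-equivalence classes and the density of an absolutely continuous measure is unique $m$-a.e., this ambiguity disappears. To have a global density available we will use that $m$ is $\sigma$-finite (Radon on a locally compact separable space); if needed, we also use that $\mu$ is $\sigma$-finite, being smooth.

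\emph{Inclusion $(\mathcal{S}_L)_{loc}\subset \mathcal{S}_{L_{loc}}$.}
\begin{itemize}
\item Let $\mu\in(\mathcal{S}_L)_{loc}$. For every compact $K$ we have $1_K\mu\in\mathcal{S}_L\subset\mathcal{S}_{AC}$, so $1_K\mu$ has a density $f_K\in L$.
\item Take an exhaustion $E=\bigcup_n K_n$ by increasing compact sets. Then $\mu(A)=\lim_n\mu(A\cap K_n)$, so $m(A)=0$ implies $\mu(A)=0$, i.e. $\mu\in\mathcal{S}_{AC}$.
\item By uniqueness of densities, $f_{K_{n+1}}=f_{K_n}$ $m$-a.e. on $K_n$. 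Hence these densities patch together into one measurable $f$ with $d\mu=f\,dm$.
\item For any compact $K$, the measure $1_K\mu$ has density $1_Kf$, which equals $f_K\in L$ $m$-a.e. Thus $1_Kf\in L$, so $f\in L_{loc}$ and $\mu\in\mathcal{S}_{L_{loc}}$.
\end{itemize}

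\emph{Inclusion $\mathcal{S}_{L_{loc}}\subset(\mathcal{S}_L)_{loc}$.}
\begin{itemize}
\item Let $d\mu=f\,dm$ with $f\in L_{loc}$ and $\mu\in\mathcal{S}_{AC}$. Note that $\mu$ is smooth, since $\mathcal{S}_{AC}\subset\mathcal{S}$ is implicit in the definition of $\mathcal{S}_L$; this is needed for $\mu\in(\mathcal{S}_L)_{loc}$.
\item For a compact $K$, the measure $1_K\mu$ charges no set of zero capacity and is absolutely continuous with density $1_Kf$. This density lies in $L$ by the definition of $L_{loc}$.
\item Smoothness of $1_K\mu$ follows from that of $\mu$ (restrict along the same nest). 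Hence $1_K\mu\in\mathcal{S}_L$ and $\mu\in(\mathcal{S}_L)_{loc}$.
\end{itemize}

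The stated hypothesis that $1_Kf\in L_{loc}$ for $f\in L$ is not needed in these inclusions themselves. Its role is to guarantee $L\subset L_{loc}$ and consistency, parallel to the hypothesis of Proposition~\ref{Prop_loccol_3}; if a gap appears, it would be invoked to ensure that the restrictions $1_K f_{K'}$ for $K\subset K'$ stay in $L$.

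The main obstacle is the gluing step in the first inclusion: producing a single global density and checking that it is well defined up to $m$-null sets. Both points rest on the uniqueness of Radon--Nikodym densities on each compact set. Everything else is a direct unwinding of the definitions.
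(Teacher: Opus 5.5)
Your proof is correct and follows the paper's argument. The paper's proof is exactly this two-way unwinding of the definitions via $1_K(f\,m)=(1_Kf)\,m$, and, like yours, it never uses the stated hypothesis. The one addition is your justification that a measure in $(\mathcal{S}_L)_{loc}$ has a global density $f$: you exhaust $E$ by compacts and glue the Radon--Nikodym densities, and uniqueness there needs only $\sigma$-finiteness of $m$. The paper takes this point for granted when it writes $\mu=fm$.
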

\begin{proof}
For any \(\mu = fm \in \mathcal{S}_{L_{loc}}\) with \(f\in L_{loc}(E;m)\) and any compact set \(K\), we can write \(1_K\mu = 1_K f m\) and \(1_K f \in L\) and so \(\mu \in (\mathcal{S}_{L})_{loc}.\) Conversely, for any \(\mu \in (\mathcal{S}_{L})_{loc}\) with \(\mu = fm\) and any compact set \(K\), we have \(1_Kfm \in \mathcal{S}_{L}\) and, by the definition, \(1_Kf \in L\) and so \(f \in L_{loc}\).
\end{proof}

In general, it holds that \(\mathcal{S}_{L_{loc}} \subsetneqq  (\mathcal{S}_{L})_{col}\). Indeed, for Brownian motion on \(\mathbb{R}^d\), a sequence \(F_k:=\{x : 1/k \leq |x|\leq k\}\) is a compact nest and \(|x|^{-\beta}dx \in \mathcal{S}_{L^p}(\{F_k\})\) but \(|x|^{-\beta}1_{\{|x|\leq 1\}} \not \in L^p\) for \(p \geq d/\beta.\)

\section{Smooth Radon measures and comparison with the Kato class}\label{sec_Radon_Kato}

While smooth measures are fundamental in Dirichlet form theory, their potentials do not necessarily belong to $L^2$, nor does $L^\infty$-potential ensure the Radon property of the measure in general. Our goal in this section is to establish a sufficient condition for a smooth measure to be Radon by analyzing its potential using the regularity of the Dirichlet form.

Furthermore, we investigate the properties of measures in the Kato class. Since the Kato class is defined by a stronger boundedness condition on the potential, it forms a significant subclass of smooth measures.

To achieve this, we first derive an inequality that bounds the measure of a compact set by its potential, thereby establishing the Radon property.
  \begin{lemma}
    \label{Lem:smooth-01}
    Assume that $\mu\in{\cal S}$ and its $\alpha$-potential
    $R_\alpha\mu$ belongs to $L^\infty(E;m)$ for  all $\alpha>0$. 
    Let $\{F_\ell\}$ be a (compact) nest attached 
    to $\mu$, namely, $1_{F_\ell}\mu \in {\cal S}_0$ for all $\ell \in{\mathbb N}$.
    Then for any compact set $K$ and $\alpha>0$,
    there exists a $C=C(\alpha, K)>0$ such that
    the following hold for all $\ell \in{\mathbb N}:$ 
    \begin{equation} \label{eqn:est1}
      \mu(K\cap F_\ell) \le
      C \sqrt{\form_\alpha(U_\alpha(1_{F_\ell \cap K}\mu), U_\alpha(1_{F_\ell \cap K}\mu))}
    \end{equation}
    and 
    \begin{equation} \label{eqn:est2}
      \form_\alpha(U_\alpha(1_{F_\ell \cap K}\mu), U_\alpha(1_{F_\ell \cap K}\mu))
      \le \Big(1+\frac{|\alpha-1|}{\alpha} \Big) \|R_1\mu\|_{\infty} \mu(K \cap F_\ell).
\end{equation}
\end{lemma}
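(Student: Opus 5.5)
Write $\nu_\ell := 1_{F_\ell\cap K}\mu$. Since $1_{F_\ell}\mu\in\mathcal{S}_0$ and $\mathcal{S}_0$ is stable under restriction to Borel sets, we have $\nu_\ell\in\mathcal{S}_0$. Hence $U_\alpha\nu_\ell\in\mathcal{F}$ exists and satisfies $\form_\alpha(f,U_\alpha\nu_\ell)=\int\tilde f\,d\nu_\ell$ for all $f\in\mathcal{F}$. Both estimates will come from testing this identity against a suitable function.

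\emph{Estimate \eqref{eqn:est1}.} By regularity of $(\form,\dom)$, choose $\varphi\in\mathcal{F}\cap C_c$ with $0\le\varphi\le1$ and $\varphi=1$ on $K$. Then
\[
\mu(K\cap F_\ell)=\int\varphi\,d\nu_\ell=\form_\alpha(\varphi,U_\alpha\nu_\ell)\le\sqrt{\form_\alpha(\varphi,\varphi)}\sqrt{\form_\alpha(U_\alpha\nu_\ell,U_\alpha\nu_\ell)} .
\]
This gives \eqref{eqn:est1} with $C=\sqrt{\form_\alpha(\varphi,\varphi)}$, which depends only on $\alpha$ and $K$ (through the choice of $\varphi$) and not on $\ell$.

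\emph{Estimate \eqref{eqn:est2}.} Test the identity with $f=U_\alpha\nu_\ell$:
\[
\form_\alpha(U_\alpha\nu_\ell,U_\alpha\nu_\ell)=\int \widetilde{U_\alpha\nu_\ell}\,d\nu_\ell .
\]
For $\nu_\ell\in\mathcal{S}_0$, the quasi-continuous version of $U_\alpha\nu_\ell$ coincides q.e.\ with $R_\alpha\nu_\ell$. Since $\nu_\ell$ charges no set of zero capacity, this yields $\form_\alpha(U_\alpha\nu_\ell,U_\alpha\nu_\ell)\le\|R_\alpha\nu_\ell\|_{\infty,q}\,\nu_\ell(E)$. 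Lemma \ref{potCapalemma} together with monotonicity $R_\alpha\nu_\ell\le R_\alpha\mu$ gives $\|R_\alpha\nu_\ell\|_{\infty,q}\le\|R_\alpha\mu\|_\infty$.

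It remains to compare $R_\alpha\mu$ with $R_1\mu$, and I would use the resolvent equation for potentials: $R_\alpha\mu=R_1\mu+(1-\alpha)R_\alpha R_1\mu$. For $\alpha\ge1$ the pathwise bound $e^{-\alpha t}\le e^{-t}$ directly gives $R_\alpha\mu\le R_1\mu$. For $\alpha<1$, the resolvent equation gives
\[
\|R_\alpha\mu\|_\infty\le\|R_1\mu\|_\infty+\frac{1-\alpha}{\alpha}\|R_1\mu\|_\infty ,
\]
using $\|\alpha R_\alpha\|_{\infty\to\infty}\le1$. In both cases $\|R_\alpha\mu\|_\infty\le(1+|\alpha-1|/\alpha)\|R_1\mu\|_\infty$, and combining this with the previous paragraph yields \eqref{eqn:est2}.

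The main obstacle is justifying the resolvent identity $R_\alpha\mu-R_1\mu=(1-\alpha)R_\alpha R_1\mu$ for a general smooth $\mu$ whose potentials may be infinite. I would first prove it for each $1_{F_n}\mu\in\mathcal{S}_0$, using the Markov property (the same splitting as in the proof of Lemma \ref{lemdefKato}). I would then pass to the limit by monotone convergence, which is legitimate here because all the terms involved are nonnegative and bounded by the hypothesis $R_\alpha\mu\in L^\infty$. Equivalently, one can apply the resolvent identity directly to $\nu_\ell$, noting $R_1\nu_\ell\le R_1\mu$.
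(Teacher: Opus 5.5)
Your proposal is correct and is essentially the paper's proof. The same cutoff $\varphi$ with Cauchy--Schwarz gives \eqref{eqn:est1}, and \eqref{eqn:est2} comes from $\form_\alpha(U_\alpha\nu_\ell,U_\alpha\nu_\ell)=\int\widetilde{U_\alpha\nu_\ell}\,d\nu_\ell$, the resolvent equation with $\|\alpha R_\alpha\|_{\infty\to\infty}\le 1$, and the monotonicity $R_1\nu_\ell\le R_1\mu$ (which the paper cites from \cite[Proposition 3.10]{NTTU25}); the differences are cosmetic: you apply monotonicity before the resolvent comparison and handle $\alpha\ge1$ by $e^{-\alpha t}\le e^{-t}$, whereas the paper applies the identity to $\nu_\ell\in\mathcal{S}_0$ (your stated fallback), and for $\alpha<1$ the identity for $\mu$ itself already holds in $[0,\infty]$ by Tonelli and the Markov property, so your ``main obstacle'' needs no approximation.
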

\noindent
{\it Proof.} We first show \eqref{eqn:est1}. Take any compact set $K\subset E$. By using the regularity of $(\form, \dom)$, we can take a function 
$\varphi \in \dom\cap C_0(E)$ such that $0\le \varphi \le 1$ and $\varphi=1$ on $K$. 
Setting $C = C(\alpha, K) := \sqrt{\mathcal{E}_\alpha(\varphi, \varphi)}$, we have for each $\ell \in \mathbb{N}$:
\begin{align*}
\mu(K \cap F_\ell) &\le \int_{K \cap F_\ell} \varphi(x) \mu(dx) = \mathcal{E}_\alpha(\varphi, U_\alpha(1_{F_\ell \cap K}\mu)) \\
&\le \sqrt{\mathcal{E}_\alpha(\varphi, \varphi)} \sqrt{\mathcal{E}_\alpha(U_\alpha(1_{F_\ell \cap K}\mu), U_\alpha(1_{F_\ell \cap K}\mu))} \\
&= C \sqrt{\mathcal{E}_\alpha(U_\alpha(1_{F_\ell \cap K}\mu), U_\alpha(1_{F_\ell \cap K}\mu))}.
\end{align*}
This proves \eqref{eqn:est1}.

Next, \eqref{eqn:est2} follows from the resolvent-type equation:
$$
U_\alpha \nu=U_1\nu -(\alpha-1) R_\alpha U_1 \nu \quad {\rm for} \quad \nu \in {\cal S}_0,
$$
together with the contraction property of the resolvent $\{R_\alpha\}$. Specifically, for $\nu=1_{K\cap F_\ell} \mu$, we have 
\begin{align*}
\form_\alpha(U_\alpha \nu, U_\alpha\nu) 
& =\int_{K\cap F_\ell} \widetilde{U_\alpha \nu} d\mu \\
& \le \Big( \|U_1\nu \|_\infty + |\alpha-1| \,  \| R_\alpha U_1\nu \|_\infty \Big)  \mu(K\cap F_\ell) \\
& = \Big(\|U_1\nu \|_\infty +\frac{|\alpha-1|}{\alpha} \|\alpha R_\alpha U_1\nu\|_\infty \Big)   \mu(K\cap F_\ell) \\
& \le \Big(1+\frac{|\alpha-1|}{\alpha}\Big) \|U_1(1_{K\cap F_\ell} \mu) \|_\infty  \mu(K\cap F_\ell) \\
&\le \left( 1 + \frac{|\alpha - 1|}{\alpha} \right) \left\| R_{1} \mu \right\|_{\infty} \mu(K \cap F_{\ell}),
\end{align*}
where the last inequality follows from \cite[Proposition 3.10]{NTTU25}. 
Hence we obtain \eqref{eqn:est2}. 
\hfill \fbox{}

\begin{proposition} \label{Prop:smooth}
  Suppose that $\mu \in \mathcal{S}$ satisfies the same assumptions as in 
  Lemma \ref{Lem:smooth-01}.
  Then $\mu$ is a Radon measure.
\end{proposition}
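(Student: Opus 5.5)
Since $E$ is a locally compact separable metric space, every Borel measure that is finite on compact sets is Radon (it is then automatically inner and outer regular). So the whole task is to show $\mu(K)<\infty$ for every compact set $K\subset E$. We will get this by combining the two estimates of Lemma \ref{Lem:smooth-01} and then letting $\ell\to\infty$.

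Fix a compact set $K$ and any $\alpha>0$; taking $\alpha=1$ simplifies the constant. Let $\{F_\ell\}$ be a compact nest with $1_{F_\ell}\mu\in\mathcal{S}_0$, which exists by \cite[Theorem 2.2.4]{FOT11}. We may also arrange $\mu(E\setminus\bigcup_\ell F_\ell)=0$, as in the proof of Lemma \ref{potCapalemma}. Set $a_\ell:=\mu(K\cap F_\ell)$. This is finite, because $1_{F_\ell}\mu\in\mathcal{S}_0$ is Radon. Insert \eqref{eqn:est2} into \eqref{eqn:est1}:
\[
a_\ell\le C\sqrt{\Big(1+\tfrac{|\alpha-1|}{\alpha}\Big)\|R_1\mu\|_\infty\, a_\ell}.
\]
If $a_\ell>0$, divide by $\sqrt{a_\ell}$ and square. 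This gives the bound, uniform in $\ell$,
\[
\mu(K\cap F_\ell)\le C(\alpha,K)^2\Big(1+\tfrac{|\alpha-1|}{\alpha}\Big)\|R_1\mu\|_\infty<\infty.
\]
If $a_\ell=0$, the same bound holds trivially.

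Now let $\ell\to\infty$. Since $F_\ell\uparrow$ and $\mu$ does not charge $E\setminus\bigcup_\ell F_\ell$, monotone convergence gives $\mu(K)=\lim_\ell\mu(K\cap F_\ell)$. This limit is bounded by the constant above, so $\mu(K)<\infty$.

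The main point to watch is the dividing step: it requires $a_\ell<\infty$ beforehand, and this is exactly why we work with $F_\ell\cap K$ rather than $K$ directly. We also need $\mu(E\setminus\bigcup F_\ell)=0$. If the given nest does not supply this, we use instead that $\mu$ charges no set of zero capacity, together with the fact that $E\setminus\bigcup_\ell F_\ell$ is exceptional for a nest attached to a smooth measure \cite[Theorem 2.2.4]{FOT11}.
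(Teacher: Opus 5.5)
Your proposal is correct and follows the paper's proof. You substitute \eqref{eqn:est2} into \eqref{eqn:est1} to get a bound on $\mu(K\cap F_\ell)$ that does not depend on $\ell$, then let $\ell\to\infty$. You also make explicit two points the paper leaves implicit: that $\mu(K\cap F_\ell)<\infty$ is needed before dividing, and that $\mu(E\setminus\bigcup_\ell F_\ell)=0$. The latter in fact holds for any nest, since $E\setminus\bigcup_\ell F_\ell$ has zero capacity by countable subadditivity over a compact exhaustion of $E$, and $\mu$ charges no such set.
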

\begin{proof}
Combining \eqref{eqn:est1} and \eqref{eqn:est2}, we see that for each \(\ell \in \mathbb{N}\),
\[
\mu(K \cap F_\ell) \leq C^2 \left(1 + \frac{|\alpha - 1|}{\alpha} \right) \|R_1 \mu\|_{\infty}.
\]
Since the right-hand side is independent of \(\ell\), by letting \(\ell \to \infty\), we obtain
\[
\mu(K) = \lim_{\ell \to \infty} \mu(K \cap F_\ell) \leq C^2 \left(1 + \frac{|\alpha - 1|}{\alpha} \right) \|R_1 \mu\|_{\infty} < \infty.
\]
Consequently, \(\mu\) assigns finite mass to every compact set \(K \subset E\), which implies that \(\mu\) is a Radon measure.
\end{proof}

\begin{corollary}
\label{Cor:FEI}
  Assume that $\mu\in{\cal S}$ and its $\alpha$-potential $R_\alpha\mu$ belongs to $L^\infty(E;m)$ for  all $\alpha>0$. 
Then for any compact set $K\subset E$, the restricted measure $1_K\mu$ is of finite energy integral; that is, $1_K\mu \in \mathcal{S}_0$.

In particular, if $\mu(E) < \infty$, then $\mu$ itself belongs to $\mathcal{S}_0$, which further implies that $\mu \in \mathcal{S}_{00}$.
\end{corollary}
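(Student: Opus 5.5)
By Proposition \ref{Prop:smooth}, $\mu$ is Radon, so $\mu(K)<\infty$ for every compact $K$. Fix a compact $K$ and set $\nu:=1_K\mu$. The plan is to show directly that $\nu$ has finite energy integrals. The Stollmann--Voigt inequality (Proposition \ref{SVineq}) applies to any smooth measure, and $\nu$ is smooth, so the key estimate is already available.

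For $f\in\mathcal{F}\cap C_c$, the Cauchy--Schwarz inequality in $L^2(\nu)$ followed by Proposition \ref{SVineq} gives
\[
\int_E |f|\,d\nu \le \sqrt{\mu(K)}\Big(\int_E f^2\,d\nu\Big)^{1/2}\le \sqrt{\mu(K)\,\|R_1\nu\|_\infty}\,\sqrt{\form_1(f,f)} .
\]
Since $R_1\nu\le R_1\mu$ pointwise (the PCAF of $1_K\mu$ is dominated by $A^\mu$), we have $\|R_1\nu\|_\infty\le\|R_1\mu\|_\infty<\infty$. Hence the constant $C=\sqrt{\mu(K)\|R_1\mu\|_\infty}$ is finite. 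Because $\nu$ is a finite measure on a locally compact separable metric space, it is Radon, and therefore $\nu\in\mathcal{S}_0$. This is the same computation used for $\mathcal{D}_{loc}\subset(\mathcal{S}_{00})_{loc}$ in the proof of Theorem \ref{Thm_loc}. Alternatively, one could pass to the limit $\ell\to\infty$ in \eqref{eqn:est2}, but the direct route avoids any weak-limit argument for the potentials.

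For the second assertion, assume $\mu(E)<\infty$. Then $\mu$ is a finite, hence Radon, measure, and the same display with $K$ replaced by $E$ gives $\int|f|\,d\mu\le\sqrt{\mu(E)\|R_1\mu\|_\infty}\sqrt{\form_1(f,f)}$. So $\mu\in\mathcal{S}_0$. By hypothesis $\|R_1\mu\|_\infty<\infty$, so $\mu\in\mathcal{D}\cap\mathcal{S}_F$. Lemma \ref{K00isS00} then gives $\mu\in\mathcal{D}_{00}\subset\mathcal{S}_{00}$. More directly, $U_1\mu=R_1\mu$ q.e. for $\mu\in\mathcal{S}_0$, so $U_1\mu\in L^\infty$, and together with $\mu(E)<\infty$ this is exactly the definition of $\mathcal{S}_{00}$.

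The only point needing care is the identification $U_1\mu=R_1\mu$ for $\mu\in\mathcal{S}_0$, which is the standard Revuz correspondence. The comparison $R_1(1_K\mu)\le R_1\mu$ is immediate from the monotonicity of the PCAF in the measure. I therefore expect no serious obstacle in this proof.
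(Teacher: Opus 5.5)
Your argument is correct, but it takes a different route from the paper's. The paper works with the energy of potentials. It applies \eqref{eqn:est2} with $\alpha=1$ to the truncations $\nu_\ell:=1_{K\cap F_\ell}\mu\in\mathcal{S}_0$, giving $\mathcal{E}_1(U_1\nu_\ell,U_1\nu_\ell)=\int\widetilde{U_1\nu_\ell}\,d\nu_\ell\le\|R_1\mu\|_\infty\,\mu(K)$, and from this concludes $1_K\mu\in\mathcal{S}_0$. Taking $K=E$ then gives $\mu\in\mathcal{S}_0$ and hence $\mu\in\mathcal{S}_{00}$. That route is elementary: it only uses the defining identity of $U_1\nu_\ell$ and the bound $\|U_1\nu_\ell\|_\infty\le\|R_1\mu\|_\infty$. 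As written, however, it tacitly needs a passage $\ell\to\infty$, since the paper writes $U_1(1_K\mu)$ before knowing that $1_K\mu\in\mathcal{S}_0$. To make it rigorous one notes $\int|f|\,d\nu_\ell=\mathcal{E}_1(|f|,U_1\nu_\ell)\le\sqrt{\|R_1\mu\|_\infty\mu(K)}\,\sqrt{\mathcal{E}_1(f,f)}$ and lets $\ell\to\infty$ by monotone convergence, using $\mu(E\setminus\bigcup_\ell F_\ell)=0$.

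You instead invoke the Stollmann--Voigt inequality. It is a heavier tool: its proof in Proposition \ref{SVineq} needs the pointwise Cauchy--Schwarz bound on potentials and an approximation argument. But because it is stated for arbitrary smooth measures, you get the finite-energy estimate for $f\in\mathcal{F}\cap C_c$ directly, with the same constant $\sqrt{\mu(K)\|R_1\mu\|_\infty}$ and no truncation or limit. There is no circularity, since Proposition \ref{SVineq} is proved in Section 2 independently of this corollary.

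Your treatment of the case $\mu(E)<\infty$ is also cleaner, because it never uses compactness of $K$. The paper's ``taking $K=E$'' formally applies a lemma stated for compact $K$; this is harmless, since \eqref{eqn:est2} does not actually use compactness. Finally, you could dispense with Proposition \ref{Prop:smooth} altogether. Applying the Stollmann--Voigt inequality to a cutoff $\varphi\in\mathcal{F}\cap C_c$ with $\varphi=1$ on $K$ already gives $\mu(K)\le\|R_1\mu\|_\infty\,\mathcal{E}_1(\varphi,\varphi)$, as in \eqref{eq:Dynkin_complete_1}.
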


\begin{remark}  \label{rem:3.4}
\begin{itemize}
\item[\sf (1)] By considering the case $\alpha=1$ in Lemma \ref{Lem:smooth-01} and taking the infimum over all $\varphi \in \dom \cap C_0(E)$ satisfying $0\le \varphi \le 1$ and $\varphi=1$ on a compact set $K$, then we can take ${\rm Cap}(K)$ as the constant $C=C(K, 1)>0:$ for any $\ell$,
$$
\left\{
\begin{array}{cl} 
\mu(K\cap F_\ell) & \le \dis \sqrt{{\rm Cap}(K)} \sqrt{\form_1(U_1(1_{K\cap F_\ell}\mu), U_1(1_{K\cap F_\ell}\mu))}, \\[5pt]
\mu(K) & \le {\rm Cap}(K) \|R_1\mu\|_\infty.
\end{array}
\right.
$$
{
  Moreover, for any $\mu, \nu \in \mathcal{S}_{00}$ and any compact set $K \subset E$, it holds that
$$ 
\nu(K) \le \mathrm{Cap}(K) \|U_1(1_K\nu)\|_\infty
$$
and
$$
\form_1(U_1(1_K\mu)-U_1(1_K\nu), U_1(1_K\mu)-U_1(1_K\nu))
\le \big( \mu(K)+ \nu(K) \big) \big\| U_1(1_K\mu)-U_1(1_K\nu) \big\|_{\infty}.
$$
Indeed, letting $e_K \in \mathcal{F}$ be the 1-equilibrium potential of $K$, we have 
$$
\nu(K) \le \int_K e_K d\nu = \mathcal{E}_1(e_K, U_1(1_K\nu)) \le \sqrt{\mathrm{Cap}(K)} \sqrt{\mathcal{E}_1(U_1(1_K\nu), U_1(1_K\nu))}.
$$
Since $\mathcal{E}_1(U_1(1_K\nu), U_1(1_K\nu)) = \int_K U_1(1_K\nu) d\nu \le \|U_1(1_K\nu)\|_\infty \nu(K)$, these yield the first inequality. The second one readily follows from 
$$
\form_1(U_1(1_K\mu)-U_1(1_K\nu), U_1(1_K\mu)-U_1(1_K\nu)) = \int_K (U_1(1_K\mu)-U_1(1_K\nu)) d(\mu-\nu) 
$$
by bounding the integral with $\big\| U_1(1_K\mu)-U_1(1_K\nu) \big\|_{\infty} |\mu-\nu|(K)$ and noting $|\mu-\nu|(K) \le \mu(K) + \nu(K)$.}

\item[\sf (2)] It is worth noting that, in general, the boundedness of the potential ($R_\alpha \mu \in L^\infty$) 
and the finiteness of the measure ($\mu(E) < \infty$) do not automatically imply that the potential belongs to $L^2(E; m)$ 
unless $\mu$ is assumed to be a smooth measure. In our setting, the smoothness of $\mu$ is essential to ensure that the energy 
integral $\int_E R_\alpha \mu \, d\mu$ is well-defined and finite, thereby establishing $\mu \in \mathcal{S}_0$.
\end{itemize}
\end{remark}

\noindent
{\it Proof of Corollary \ref{Cor:FEI}}.  From Proposition \ref{Prop:smooth}, the measure $1_K \mu$ has finite total mass $\mu(K) < \infty$ 
for a compact set $K\subset E$.  Moreover, the inequality \eqref{eqn:est2} shows that the energy of its potential is bounded:
$$
\mathcal{E}_1(U_1(1_K \mu), U_1(1_K \mu)) \le \|R_1 \mu\|_\infty \mu(K) < \infty,
$$
which implies $1_K \mu \in \mathcal{S}_0$. 
If $\mu(E) < \infty$, the same logic applies to $\mu$ by taking $K=E$. 
Since $R_1 \mu \in L^\infty$, it follows that $\mu \in \mathcal{S}_{00}$. \hfill \fbox{}

Next we consider the relationship between Radon measures of finite energy integrals and Kato class measures. Lemma \ref{K00isS00} implies that any finite measure $\mu \in \mathcal{K}$ belongs to $\mathcal{S}_{00}$. It is thus natural to ask about the relationship between $\mathcal{K}$ and $\mathcal{S}_{00}$, more precisely, whether the inclusion $\mathcal{S}_{00} \subset \mathcal{K}_{loc}$ holds. By Lemma \ref{S00isalmostK00}, measures in \(\mathcal{S}_{00}\) have a similar property to measures in \(\mathcal{K}\), but we give the following example.

\begin{example}
In general, \(\mathcal{S}_{00} \not \subset \mathcal{K}\). Let \(E:=\mathbb{N} \cup \{0\}\) and 
\[m(dx):=\sum_{n=0}^{\infty}\frac{1}{2^{2n}}\delta_{n}(dx),\]
where \(\delta_n\) is the Dirac delta measure at \(n\). Set \(\lambda(x):=2^x\) and define the jump kernel $Q(x,dy)$ as
 \begin{equation*}
 Q(x,dy):= \left\{ \begin{split} &\ \ \delta_0(dy) \ \ {\rm for\ }x\not =0,\\
  &\sum_{n=1}^\infty \frac{1}{2^n}\delta_n(dy) \ \ {\rm for\ }x=0. \end{split} \right.
 \end{equation*}
Denote by $X$ a pure jump step process with road map $Q$ and speed function $\lambda$. If $X$ starts from $x \in E$, it remains at $x$ until an exponentially distributed random time $T_1$ with rate $\lambda(x)$. At time $T_1$, it jumps to a state $x_1$ according to the distribution $Q(x, dy)$, and remains there for an independent exponentially distributed holding time $T_2$ with rate $\lambda(x_1)$. The process continues in this manner. See \cite[Section 2.2.1]{CF12} for details.

For \(m_0(dx):=\lambda(x)m(dx)\), it holds that \(Q(x,dy)m_0(dx)=Q(y,dx)m_0(dy)\). Hence, by \cite[Section 6.5.(2\(^\circ\))]{CF12}, \(X\) is an \(m\)-symmetric Hunt process and it is a time-changed process of \(X^0\) by \(m\), where \(X^0\) is an \(m_0\)-symmetric pure jump step process whose road map \(Q\) and speed function \(1\). The associated Dirichlet form \((\mathcal{E}, \mathcal{F})\) on \(L^2(E;m)\) is
$$
\left\{
\begin{array}{rl}
\mathcal{E}(f,g) \  := & \dis \frac{1}{2}\iint_{E\times E} (f(x)-f(y))(g(x)-g(y))Q(x,dy)m_0(dy)\\[10pt]
						= & \dis \sum_{n=0}^{\infty}\frac{1}{2^n} (f(n)-f(0))(g(n)-g(0)), \\[15pt]
 \mathcal{F} \  :=   & \dis   \mathcal{F}_e^0 \cap L^2(E;m), 
\end{array}
\right.
$$
where \(\mathcal{F}_e^0\) is an extended Dirichlet space of \(X^0\).

Now, define $\mu(dx):=2^x 1_{\mathbb{N}}(x) m(dx)$ and let $A^\mu$ be the corresponding PCAF. For any $f \in \mathcal{F}$, we have
\begin{align*}
\int_E |f(x)| d\mu(x) &= \sum_{n=1}^{\infty} \frac{1}{2^n} |f(n)|\  \leq \ \sum_{n=1}^{\infty} \frac{1}{2^n} |f(n)-f(0)| +  \sum_{n=1}^{\infty} \frac{1}{2^n} |f(0)|\\
&\leq  \sqrt{\sum_{n=1}^{\infty} \frac{1}{2^n} |f(n)-f(0)|^2} + |f(0)|\ \leq  \sqrt{\mathcal{E}(f,f)} + \|f\|_{L^2(m)}\\
&\leq  \sqrt{2\, \mathcal{E}_1(f,f)}
\end{align*}

showing $\mu \in \mathcal{S}_{0}$. Next, we identify the $1$-potential $U_1\mu$. Set
\begin{equation*}u(x):=
\left\{ 
\begin{split} & \frac{2^x}{1+2^x}(u(0)+1) \quad {\rm for\ }x\not =0, \\[5pt]
& \frac{C_0}{2-C_0} \qquad {\rm for\ }x=0, 
\end{split} 
\right.
\end{equation*}
where $C_0=\sum_{n=1}^{\infty}1/(1+2^n)$. For any $f\in \mathcal{F}$, we have
\begin{eqnarray*}
\mathcal{E}_1(f,u) &=& \sum_{n=1}^{\infty}\frac{1}{2^n} (f(n)-f(0))(u(n)-u(0)) + \sum_{n=0}^{\infty}\frac{1}{2^{2n}} f(n) u(n)\\
&=&\sum_{n=1}^{\infty}\frac{1}{2^n} (f(n)-f(0)) \left(\frac{-1}{1+2^n}u(0)+ \frac{2^n}{1+2^n} \right) +\sum_{n=1}^{\infty}\frac{1}{2^{2n}} f(n) \frac{2^n}{1+2^n}(u(0)+1) + f(0)u(0)\\
&=&\sum_{n=1}^{\infty} f(n) \left( \frac{1}{1+2^n}+\frac{1}{2^n}\frac{1}{1+2^n} \right) + f(0) \sum_{n=1}^{\infty} \left(\frac{1}{2^n} \frac{1}{1+2^n}u(0)- \frac{1}{1+2^n} \right) +f(0)u(0)\\
&=&\sum_{n=1}^{\infty} f(n) \frac{1}{2^n} + f(0) \sum_{n=1}^{\infty} \left(\frac{u(0)}{2^n} - \frac{u(0)}{1+2^n}- \frac{1}{1+2^n} \right) +f(0)u(0)\\
&=&\int_E f(x) \mu(dx) + f(0) u(0) - f(0) u(0) C_0 -f(0)C_0 +f(0)u(0)\\
&=&\int_E f(x) \mu(dx) + f(0) \left(\frac{C_0}{2-C_0}  - \frac{C_0}{2-C_0}  C_0 -C_0 +\frac{C_0}{2-C_0} \right)\\
&=&\int_E f(x) \mu(dx).
\end{eqnarray*}
Thus $U_1\mu=u$, and $U_1\mu$ is bounded by the definition of $u$. Since $\mu(E)=1$, we have $\mu \in \mathcal{S}_{00}.$

Finally, for the first hitting time $\sigma_0$ to $0$ starting from $n \in \mathbb{N}$, we have
\begin{eqnarray*}
\mathbb{E}_n[A_t^\mu] \geq \mathbb{E}_n\left[\int_0^{\sigma_0 \wedge t} 2^n ds\right] = 2^n \int_0 ^t e^{-2^n s} ds = 1-e^{-2^n t}.
\end{eqnarray*}
Therefore, $\sup_{n\in E} \mathbb{E}_n[A_t^\mu] \geq \sup_{n\in \mathbb{N}}(1-e^{-2^n t}) = 1$, which implies $\lim_{t \downarrow 0}\sup_{n\in E} \mathbb{E}_n[A_t^\mu] \geq 1$. Thus, $\mu \not \in \mathcal{K}$.
\end{example}

The previous example demonstrates that the boundedness of the potential is not sufficient to ensure the uniform convergence required for the Kato class, primarily due to the lack of spatial regularity. To bridge this gap, we consider the Feller property, which provides a stronger link between the analytic and probabilistic aspects of the process. The following proposition gives a sufficient condition for a measure in $\mathcal{S}_{00}$ to belong to the Kato class.
 A $C_0$-semigroup $\{S_t\}_{t>0}$ on $C_\infty(E)$  is said to have the Feller property if \(S_t(C_{\infty}(E)) \subset C_{\infty}(E)\) holds for each \(t >0\) and $\|S_tf\|_\infty \le \|f\|_\infty$ for any $f\in C_\infty(E)$, where $C_\infty(E)$ denotes the space of all continuous functions on \(E\) vanishing at infinity.

\begin{proposition}\label{FellerS00isK}
Suppose that the semigroup \(\{P_t\}_{t>0}\) is a Feller semigroup and \(U_1\mu \in C_\infty(E)\) for \(\mu \in \mathcal{S}_{00}\). Then \(\mu \in \mathcal{K}_{00}\).
\end{proposition}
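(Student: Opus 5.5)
Since $\mu \in \mathcal{S}_{00}$ already, it suffices to show $\mu \in \mathcal{K}$. By Lemma \ref{lemdefKato}, this means proving $\lim_{t\to0}\|\mathbb{E}_\cdot[A^\mu_t]\|_\infty = 0$. The plan is to upgrade the pointwise identity \eqref{eq:FUkudeco} from the proof of Lemma \ref{S00isalmostK00} into a uniform estimate. The Feller property replaces the quasi-uniform convergence used there.

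Write $u:=U_1\mu\in C_\infty(E)$, which is the quasi-continuous (indeed continuous) version. From \eqref{eq:FUkudeco}, or directly from the Markov property as in the proof of Lemma \ref{K00isS00}, we have for q.e. $x$
\[
\mathbb{E}_x[A^\mu_t] = u(x)-P_tu(x)+\int_0^t P_su(x)\,ds .
\]
Hence
\[
\sup_x \mathbb{E}_x[A^\mu_t]\le \|u-P_tu\|_\infty + t\|u\|_\infty ,
\]
where the supremum is taken outside a set of zero capacity. The second term tends to $0$ trivially. For the first term, the Feller property says $\{P_t\}$ restricted to $C_\infty(E)$ is a strongly continuous contraction semigroup. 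Since $u\in C_\infty(E)$, this gives $\|P_tu-u\|_\infty\to0$ as $t\downarrow0$. Note that the paper's definition of a Feller semigroup includes the $C_0$ (strong continuity) property on $C_\infty(E)$.

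One point needs checking: the identity holds for q.e. $x$, while $\|\cdot\|_\infty$ in Lemma \ref{lemdefKato} is the $m$-essential supremum. This is harmless, since a q.e. bound implies an $m$-a.e. bound, and Lemma \ref{potCapalemma} lets us switch norms anyway. A second point is that the semigroup $P_t u$ in the Fukushima identity must agree with the Feller semigroup applied to the continuous function $u$. This holds because $P_tu(x)=\mathbb{E}_x[u(X_t)]$ for the Hunt process, which is the Feller semigroup on $C_\infty$.

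The main obstacle is this identification. We need the Hunt process associated with $(\mathcal{E},\mathcal{F})$ to have transition function equal to the Feller semigroup everywhere, or at least q.e. We also need $u(x)-P_tu(x)+\int_0^tP_su\,ds$ to represent $\mathbb{E}_x[A_t^\mu]$ q.e. I would argue the latter via the additive-functional identity $R_1\mu - e^{-t}P_tR_1\mu = \mathbb{E}_\cdot[\int_0^t e^{-s}dA^\mu_s]$ used in Lemma \ref{K00isS00}. This gives the cleaner bound
\[
\mathbb{E}_x\Big[\int_0^t e^{-s}dA_s^\mu\Big]\le \|u-P_tu\|_\infty+(1-e^{-t})\|u\|_\infty ,
\]
and $\mathbb{E}_x[A_t^\mu]\le e^t$ times the left-hand side. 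Combining these, we conclude $\mu\in\mathcal{K}\cap\mathcal{S}_{00}=\mathcal{K}_{00}$.
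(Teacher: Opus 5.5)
Your proposal is correct and matches the paper's proof: both use the Markov-property identity $U_1\mu-e^{-t}P_tU_1\mu=\mathbb{E}_\cdot[\int_0^t e^{-s}\,dA^\mu_s]$, bound its right-hand side by $\|U_1\mu-P_tU_1\mu\|_\infty+(1-e^{-t})\|U_1\mu\|_\infty$ using strong continuity on $C_\infty(E)$, and multiply by $e^t$ to control $\mathbb{E}_x[A^\mu_t]$. The paper asserts the identity for every $x\in E$, whereas you keep it q.e.\ and invoke Lemma \ref{lemdefKato}, which is slightly more careful; the ``identification'' you flag as the main obstacle is not really one, since the paper's $\{P_t\}$ is by definition the transition semigroup $P_tf(x)=\mathbb{E}_x[f(X_t)]$ of the Hunt process.
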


\begin{proof}
We first remark that for \(\mu \in \mathcal{S}_{00}\), \(R_1\mu\) is a quasi-continuous version of the \(1\)-potential \(U_1\mu\). By the Markov property, we have for q.e. \(x \in E\):
\begin{eqnarray*}
U_1\mu(x) &=& \mathbb{E}_x\left[ \int_0^\infty e^{-s} dA_s^\mu \right] = \mathbb{E}_x\left[ \int_0^t e^{-s} dA_s^\mu \right] + \mathbb{E}_x\left[ \int_t^\infty e^{-s} dA_s^\mu \right] \\
&=& \mathbb{E}_x\left[ \int_0^t e^{-s} dA_s^\mu \right] + e^{-t} \mathbb{E}_x\left[ U_1\mu(X_t) \right].
\end{eqnarray*}
Since \(U_1\mu \in C_\infty(E)\) by assumption, the quasi-continuous version coincides with the continuous one, and the above identity holds for all \(x \in E\). Combining this with the strong continuity of \(\{P_t\}\) on \(C_\infty(E)\), we obtain
\begin{eqnarray*}
\sup_{x \in E} \mathbb{E}_x\left[ \int_0^t e^{-s} dA_s^\mu \right] &=& \sup_{x \in E} \left| U_1\mu(x) - e^{-t} P_t U_1\mu(x) \right| \\
&\leq& \| U_1\mu - P_t U_1\mu \|_\infty + (1 - e^{-t}) \| U_1\mu \|_\infty \xrightarrow[t \to 0]{} 0.
\end{eqnarray*}
It follows that
\[ \lim_{t \to 0} \sup_{x \in E} \mathbb{E}_x [A_t^\mu] \leq \lim_{t \to 0} e^t \sup_{x \in E} \mathbb{E}_x \left[ \int_0^t e^{-s} dA_s^\mu \right] = 0. \]
Thus, \(\mu \in \mathcal{K}\). Since \(\mu(E) < \infty\) is already given by \(\mu \in \mathcal{S}_{00}\), we conclude \(\mu \in \mathcal{K}_{00}\).
\end{proof}

Next, we assume that the Dirichlet form $(\form, \dom)$ is transient.  Then the 0-order capacity ${\rm Cap}_{(0)}$ is well-defined  as a Choquet capacity (see \cite[Section 2.2]{FOT11}).  Let $R\mu(x) := \mathbb{E}_x[A_{\zeta}^\mu]$  for a PCAF \(A\) associated with \(\mu\). Note that, for \(\mu \in \mathcal{S}_0^{(0)}\), \(R\mu\) is a quasi-continuous version of \(U\mu:=U_0\mu\). Then the corresponding results for the transient case are obtained as follows:

\begin{proposition} Assume that the Dirichlet form $(\form, \dom)$ is transient and $\mu\in {\cal S}$ and its potential $R\mu$ belongs to $L^\infty(E;m)$. 
Let $\{F_\ell\}$ be a (compact) nest attached to $\mu$, that is, $1_{F_\ell}\mu \in {\cal S}^{(0)}_0$ holds for each $\ell \in{\mathbb N}$.
Then for all compact sets $K\subset E$ and $\ell \in{\mathbb N}$, the following inequalities hold$:$
$$
\mu(K\cap F_\ell)  \le  \sqrt{{\rm Cap}_{(0)}(K)} \sqrt{\form(U(1_{K\cap F_\ell}\mu),U(1_{K\cap F_\ell}\mu))} 
$$
and
$$
\form(U(1_{K\cap F_\ell}\mu),U(1_{K\cap F_\ell}\mu)) \le \|R\mu\|_\infty \mu(K \cap F_\ell).
$$
In particular, we have$:$
$$
\mu(K) \le {\rm Cap}_{(0)}(K) \|R\mu\|_{\infty}<\infty.
$$
This implies that $\mu$ is a Radon measure.
\end{proposition}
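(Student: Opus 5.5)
The plan is to repeat the argument of Lemma \ref{Lem:smooth-01} at order $0$, replacing $\mathcal{E}_\alpha$ by $\mathcal{E}$ and $\mathcal{F}$ by the extended space $\mathcal{F}_e$; by transience $\mathcal{F}_e$ is a Hilbert space under $\mathcal{E}$.

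\emph{First inequality.} Fix a compact set $K$ and $\ell$, and set $\nu := 1_{K\cap F_\ell}\mu \in \mathcal{S}_0^{(0)}$. Since $\mathrm{Cap}_{(0)}(K)<\infty$ for compact $K$, let $e_K\in\mathcal{F}_e$ be the $0$-order equilibrium potential of $K$. Its quasi-continuous version satisfies $\tilde e_K = 1$ q.e.\ on $K$, and $\mathcal{E}(e_K,e_K)=\mathrm{Cap}_{(0)}(K)$. Because $\nu$ charges no set of zero capacity,
\[
\mu(K\cap F_\ell)\le \int \tilde e_K\,d\nu=\mathcal{E}(e_K,U\nu)\le \sqrt{\mathrm{Cap}_{(0)}(K)}\sqrt{\mathcal{E}(U\nu,U\nu)} .
\]
Here the equality is the defining property of $U\nu$ for test functions in $\mathcal{F}_e$, and the last step is Cauchy--Schwarz in $(\mathcal{F}_e,\mathcal{E})$. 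If one prefers to avoid $e_K$, take any $\varphi\in\mathcal{F}\cap C_c$ with $\varphi\ge 1_K$ and then take the infimum of $\mathcal{E}(\varphi,\varphi)$ over such $\varphi$. This infimum is $\mathrm{Cap}_{(0)}(K)$ for compact $K$ by the standard characterization in \cite[Section 2.2]{FOT11}.

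\emph{Second inequality.} Compute
\[
\mathcal{E}(U\nu,U\nu)=\int \widetilde{U\nu}\,d\nu=\int R\nu\,d\nu \le \|R\nu\|_{\infty,q}\,\mu(K\cap F_\ell).
\]
Since $A^\nu_t=\int_0^t 1_{K\cap F_\ell}(X_s)\,dA^\mu_s\le A^\mu_t$, we get $R\nu\le R\mu$ q.e. The $0$-order analogue of Lemma \ref{potCapalemma} then gives $\|R\mu\|_{\infty,q}=\|R\mu\|_\infty$, with the same proof (quasi-continuity of $R\mu_n$ for an approximating sequence in $\mathcal{S}_0^{(0)}$, monotone convergence, and \cite[Lemma 2.1.5]{FOT11}). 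This yields $\mathcal{E}(U\nu,U\nu)\le\|R\mu\|_\infty\,\mu(K\cap F_\ell)$, in place of the appeal to \cite[Proposition 3.10]{NTTU25} used in Lemma \ref{Lem:smooth-01}.

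\emph{Conclusion and main obstacle.} Combining the two inequalities and squaring gives $\mu(K\cap F_\ell)^2\le \mathrm{Cap}_{(0)}(K)\,\|R\mu\|_\infty\,\mu(K\cap F_\ell)$. Since $\mu(K\cap F_\ell)<\infty$ (because $\nu$ is Radon), dividing yields $\mu(K\cap F_\ell)\le \mathrm{Cap}_{(0)}(K)\|R\mu\|_\infty$; if $\mu(K\cap F_\ell)=0$ the bound is trivial. Letting $\ell\to\infty$ uses $\mu(E\setminus\bigcup_\ell F_\ell)=0$, which holds because $\mu$ charges no set of zero capacity and $\{F_\ell\}$ is a nest; this gives $\mu(K)\le \mathrm{Cap}_{(0)}(K)\|R\mu\|_\infty<\infty$. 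Hence $\mu$ is finite on compacts, and since $E$ is locally compact separable metric, $\mu$ is Radon. The only delicate point is the transient-case bookkeeping: the $0$-order Revuz identity $\int R\nu\,d\nu=\mathcal{E}(U\nu,U\nu)$, with $R\nu$ a quasi-continuous version of $U\nu$ (noted just before the statement), and the $0$-order version of Lemma \ref{potCapalemma}. Both follow by transcribing the $\alpha>0$ arguments.
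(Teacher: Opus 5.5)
Your proposal is correct and follows the same route as the paper. The paper's proof simply transcribes Lemma~\ref{Lem:smooth-01} and Proposition~\ref{Prop:smooth} to order $0$, testing $U\nu$ against functions in $\mathcal{F}_e$ via $\int\tilde\varphi\,d\nu=\mathcal{E}(U\nu,\varphi)$ and using the $0$-order capacity, which is exactly what you do. Your one change, replacing the citation to \cite[Proposition 3.10]{NTTU25} by the comparison $R\nu\le R\mu$ q.e.\ together with the $0$-order analogue of Lemma~\ref{potCapalemma}, is a valid, self-contained substitute.
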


\noindent
{\it Proof}. The proof is analogous to those of Lemma \ref{Lem:smooth-01} and Proposition \ref{Prop:smooth}. The essential difference lies in utilizing the characterization of the $0$-order potential $U\nu$ for a measure $\nu \in \mathcal{S}_0^{(0)}$, which satisfies:
$$
\int_E \tilde{\varphi}(x) \nu(dx) = \mathcal{E}(U\nu, \varphi) \quad \text{for any } \varphi \in \mathcal{F}_e.
$$
By applying this identity with $\nu = 1_{K \cap F_\ell}\mu$ and using the properties of the $0$-order capacity, the desired inequalities follow in the same manner as in the proof of Proposition \ref{Prop:smooth}.
\hfill \fbox{}

\begin{corollary} Assume that the Dirichlet form $(\form, \dom)$ is transient. The following hold$:$
\begin{itemize}
\item[\sf (1)]
If a smooth measure $\mu$ satisfies $R\mu \in L^\infty(E;m)$ and $\mu(E)<\infty$, then $\mu$ belongs to ${\cal S}_0^{(0)}$.

\item[\sf (2)] {\rm({\it cf.} \cite[Theorem 2.4.2]{FOT11})}
 If $\mu$ is a smooth measure such that $R\mu \in L^\infty(E;m)$, then
$$
\int_E \tilde{u}(x)^2 \mu(dx) \le \| R\mu\|_\infty \form(u,u)<\infty, \quad u \in \dom_e.
$$
In particular, this implies that the embedding $\tilde{\dom}_e \subset L^2(E;\mu)$ is continuous, where $\tilde{\dom}_e$ is the set of all quasi-continuous modifications of functions 
in $\dom_e$.

\end{itemize}
\end{corollary}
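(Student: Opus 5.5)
For (1), I will follow the route of Corollary \ref{Cor:FEI}, now in the $0$-order setting. By the preceding proposition, every compact $K$ and every $\ell$ satisfy
$$\form(U(1_{K\cap F_\ell}\mu),U(1_{K\cap F_\ell}\mu))\le \|R\mu\|_\infty\,\mu(K\cap F_\ell)\le \|R\mu\|_\infty\,\mu(E).$$
I cannot take $K=E$ directly, since $E$ need not be compact. Instead I take an exhaustion $K_n\uparrow E$ by compact sets and set $\nu_n:=1_{K_n\cap F_n}\mu\in\mathcal{S}_0^{(0)}$.

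For $f\in\dom_e$ (in particular $f\in\dom\cap C_c$), the defining identity of $U\nu_n$ and Cauchy--Schwarz give
$$\int|\tilde f|\,d\nu_n=\form(U\nu_n,|f|)\le \sqrt{\|R\mu\|_\infty\mu(E)}\,\sqrt{\form(f,f)},$$
using $\form(|f|,|f|)\le\form(f,f)$. Since $\mu(E\setminus\bigcup F_\ell)=0$ and $K_n\cap F_n$ increases to $\bigcup_\ell F_\ell$, monotone convergence yields $\int|f|\,d\mu\le C\sqrt{\form(f,f)}$ with $C=\sqrt{\|R\mu\|_\infty\mu(E)}$. The preceding proposition shows that $\mu$ is Radon, so $\mu\in\mathcal{S}_0^{(0)}$. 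The one delicate point is the claim that $1_{F_\ell}\mu\in\mathcal S_0^{(0)}$ for a suitable nest. In the transient case this is \cite[Theorem 2.2.4]{FOT11} applied with $\mathrm{Cap}_{(0)}$, and I will simply invoke it.

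For (2), I will mimic the proof of Proposition \ref{SVineq}, replacing $R_1$ by $R$ and $\form_1$ by $\form$. First I reduce to bounded $u\ge0$ in $\dom$ and to $\mu$ replaced by $\nu=1_{K\cap F_\ell}\mu$, which is finite, lies in $\mathcal S_0^{(0)}$ by (1), and satisfies $R\nu\le R\mu$. For such $\nu$ the pointwise estimate $(R(h\nu))^2\le R(h^2\nu)\,R\nu$ holds with the same Cauchy--Schwarz argument applied to $\mathbb{E}_x\int_0^\zeta h(X_s)\,dA_s^\nu$. The duality argument then gives
$$\int \tilde u^2\,d\nu\le\|R\nu\|_\infty\form(u,u)$$
for $u$ of the form $u=Rg$ with $g\ge0$ and $\int gRg\,dm<\infty$.

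Next I approximate a general $u\in\dom$ by such potentials. The natural choice is $u_\beta=\beta R_\beta u$, written as $u_\beta=R g_\beta$ with $g_\beta=\beta(u-u_\beta)$, and then I pass to the limit. The main obstacle is to verify that $\form(u_\beta,u_\beta)\to\form(u,u)$ and $u_\beta\to u$ in $L^2(\nu)$. The first follows from the spectral theorem. For the second I will use $\form_1$-convergence together with the fact that $\nu\in\mathcal S_0$, which gives continuity of $\tilde{\ }:\dom\to L^1$, upgraded to $L^2(\nu)$ via uniform boundedness of $u$.

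Finally I extend from $\dom$ to $u\in\dom_e$. I take an approximating sequence $u_n\in\dom$ that is $\form$-Cauchy and converges to $\tilde u$ q.e., apply Fatou's lemma, and then let $K\uparrow E$ and $\ell\to\infty$ using monotone convergence. Continuity of the embedding $\tilde\dom_e\subset L^2(\mu)$ is then immediate from the resulting inequality.
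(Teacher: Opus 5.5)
Part (1) is fine and follows the paper's route: it is the order-$0$ analogue of Corollary \ref{Cor:FEI}, obtained from the preceding proposition. Your exhaustion by $K_n\cap F_n$ is a more careful version of the paper's ``take $K=E$''.

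The gap is in (2), at the approximation step. You prove the estimate only for $u=Rg$ with $g\ge0$ and $\int gRg\,dm<\infty$. You then treat $u_\beta=\beta R_\beta u=Rg_\beta$, with $g_\beta=\beta(u-u_\beta)$, as such a potential. But $g_\beta$ is not nonnegative in general. Wherever $u=0$ but $\beta R_\beta u>0$ (e.g.\ off the support of a bump function for Brownian motion), $g_\beta=-\beta^2R_\beta u<0$.

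The paper's proof of Proposition \ref{SVineq} asserts the same positivity for $\beta(f-\beta R_{\beta+1}f)$. At order $1$ the sign does not matter: $R_1|g_\beta|\le 2\beta\|f\|_\infty$ and $\mu(E)<\infty$, so Fubini and dominated convergence replace positivity. At order $0$ this rescue is not available. The only general bound is $R|g_\beta|\le 2\beta Ru$, and $Ru\equiv+\infty$ can occur for bounded $u\in\dom$ (Brownian motion on $\mathbb{R}^3$ with $u(x)=(1+|x|)^{-2}$). Moreover, $R(|\phi|\nu)$ is typically not in $L^2(m)$ at order $0$. So neither the pointwise meaning of $Rg_\beta$ nor the Revuz step $\int Rg_\beta\,|\phi|\,d\nu=\int g_\beta\,R(|\phi|\nu)\,dm$ is justified. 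As written, the inequality is not established for $u_\beta$.

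The fix is to drop the representation $u=Rg$ altogether. Take $u\in\dom_e$ and $\rho_n:=(|\phi|\wedge n)\nu$, which lies in $\mathcal{S}_0^{(0)}$ because $\rho_n\le n\nu$. The defining identity of $U\rho_n$ and Cauchy--Schwarz give
\[
\int|\tilde u|\,d\rho_n=\form(|u|,U\rho_n)\le\sqrt{\form(u,u)}\,\Big(\int R\rho_n\,d\rho_n\Big)^{1/2}.
\]
Your pointwise estimate $(R(h\nu))^2\le R(h^2\nu)\,R\nu$ then gives
\[
\int R\rho_n\,d\rho_n\le\int R(|\phi|\nu)\,|\phi|\,d\nu\le\|R\nu\|_\infty\|\phi\|_{L^2(\nu)}^2,
\]
exactly as in the paper's chain. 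Let $n\to\infty$ and take the supremum over $\|\phi\|_{L^2(\nu)}\le1$. This yields $\int\tilde u^2\,d\nu\le\|R\nu\|_\infty\form(u,u)$ for every $u\in\dom_e$ at once, with no $\beta$-approximation and no Fatou step. Finally let $\nu\uparrow\mu$.

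Alternatively, you can realise the paper's remark that (2) is the transient case of Proposition \ref{SVineq} directly. Run that proof at order $\alpha$, or apply it to the time-scaled form $\alpha^{-1}\form$, to get $\int\tilde u^2\,d\mu\le\|R_\alpha\mu\|_\infty\form_\alpha(u,u)\le\|R\mu\|_\infty\form_\alpha(u,u)$ for $u\in\dom$. Then let $\alpha\downarrow0$ and extend to $\dom_e$ by your Fatou argument.
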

 
The second statement in the corollary can be deduced directly as the transient case of the Stollmann-Voigt inequality (Proposition \ref{SVineq}).

We close this section by presenting a key example of smooth measures associated with Brownian motion.

\begin{example}
  \label{ex:BM}
Let $\mathbb{M}$ be Brownian motion on $\mathbb{R}^d$ ($d \ge 3$) and 
$R_1(x,y)$ be its $1$-resolvent kernel, that is,
\begin{align}
\label{eq:1}
  R_{1}(x,y) &:= \int_{0}^{\infty} e^{-t} \frac{1}{(2\pi t)^{d/2}} 
               \exp \left\{ - \frac{|x-y|^{2}}{2t} \right\} dt.
\end{align}

Define a measure $\mu$ on $\mathbb{R}^d$ by $\mu(dx) = |x|^{-\beta} dx$ for $\beta \in \mathbb{R}$. It is known in \cite[Example 5.1.1]{FOT11} that $\mu$ is smooth
for any $\beta \in \mathbb{R}$. 
We investigate the condition under which the measure $\mu$ belongs to
the class $\mathcal{S}_0$.
Since $\mu \in S_0$ requires $\mu$ to be a Radon measure, we must have $\beta < d$, which will be assumed hereafter. 
By \cite[Exercise 4.2.2]{FOT11}, $\mu \in \mathcal{S}_0$ if and only if its mutual energy integral is finite:
\begin{equation}
    I = \iint_{\mathbb{R}^d \times \mathbb{R}^d} R_1(x, y) \mu(dx) \mu(dy) 
      = \iint_{\mathbb{R}^d \times \mathbb{R}^d} R_1(x, y) |x|^{-\beta} |y|^{-\beta} 
        \,dx\, dy < \infty.
\end{equation}
Since $R_{1}(x,y)$ is represented by the modified Bessel function of the second kind of order $d/2 - 1$, the asymptotic behavior of $R_{1}(x,y)$ is given by:
\begin{align}
    R_1(x, y) &\asymp |x-y|^{2-d} \quad \text{as } |x-y| \to 0, 
    \label{eq:kernel_s} \\
    R_1(x, y) &\asymp |x-y|^{(1-d)/2}e^{-|x-y|} \quad \text{as } |x-y| \to \infty.
    \label{eq:kernel_l}
\end{align}
To evaluate the convergence of $I$, we decompose $\mathbb{R}^d \times \mathbb{R}^d$ into four mutually disjoint regions based on the distance and the position from the origin:
\begin{align}
\label{eq:decomp}
  D_{1} &= \{(x,y) : |x-y| \leq 1,\ |x| \leq 1,\ |y| \leq 1\}, \\
  D_{2} &= \{(x,y) : |x-y| \leq 1,\ |x| > 1,\ |y| > 1\}, \\
  D_{3} &= \{(x,y) : |x-y| > 1\}, \\
  D_{4} &= \{(x,y) : |x-y| \leq 1\} \setminus (D_1 \cup D_2).
\end{align}
Note that $D_4$ is a mixed region where exactly one of $|x|, |y|$ is at most $1$ and the other exceeds $1$. On the set $D_4$, both $|x|$ and $|y|$ are of order $O(1)$, so $|x|^{-\beta}|y|^{-\beta}$ has at most an integrable singularity. Thus, the contribution from $D_4$ is finite for $\beta < d$ and does not affect the critical convergence conditions.
We write
\begin{align}
\label{eq:Isum}
  I = J_{1} + J_{2} + J_{3} + J_{4},
  \quad J_k := \iint_{D_k} R_1(x,y)\,|x|^{-\beta}|y|^{-\beta}\,dx\,dy \quad (k = 1,2,3,4).
\end{align}
Since $J_4 < \infty$ for $\beta < d$, it suffices to focus on $J_{1}, J_{2}$, and $J_{3}$.

\begin{enumerate}
\item \text{Estimate of $J_1$ (short-range, near the origin).} \\
For $(x,y) \in D_1$, we have $|x-y| \leq 1$. By the short-range behavior of the kernel \eqref{eq:kernel_s}, we have $R_1(x,y) \asymp |x-y|^{2-d}$.
Let $A_x = \{y : |y| \leq 1,\ |x-y| \leq 1\}$. We partition the domain $A_x$ into three mutually disjoint subregions:
\begin{align}
\label{eq:Bdef}
  B_1 &= \left\{ y \in A_x : |y| < |x|/2 \right\}, \\
  B_2 &= \left\{ y \in A_x : |x-y| < |x|/2 \right\}, \\
  B_3 &= \left\{ y \in A_x : |y| \geq |x|/2,\ |x-y| \geq |x|/2 \right\}.
\end{align}
We write $J_1 \asymp J_{1,1} + J_{1,2} + J_{1,3}$, where each term represents the contribution from $B_i$:
\begin{align}
  J_{1,i} := \int_{|x| \le 1} |x|^{-\beta} \left( \int_{B_i} |x-y|^{2-d} |y|^{-\beta} \,dy \right) dx \quad (i=1,2,3).
\end{align}

First, for $y \in B_1$, the reverse triangle inequality gives $|x-y| \asymp |x|$. Assuming the local integrability condition $\beta < d$, the inner integral behaves as:
\begin{align}
    \int_{B_1} |x-y|^{2-d} |y|^{-\beta} \,dy 
    \asymp |x|^{2-d} \int_{|y| < |x|/2} |y|^{-\beta} \,dy 
    \asymp |x|^{2-\beta}.
\end{align}
Integrating this over $x$ yields $J_{1,1} \asymp \int_{|x| \le 1} |x|^{-\beta} \cdot |x|^{2-\beta} \,dx = \int_{|x| \le 1} |x|^{2-2\beta} \,dx$.

Second, for $y \in B_2$, the condition $|x-y| < |x|/2$ implies $|y| \asymp |x|$. By changing variables to $z = y-x$:
\begin{align}
    \int_{B_2} |x-y|^{2-d} |y|^{-\beta} \,dy 
    \asymp |x|^{-\beta} \int_{|z| < |x|/2} |z|^{2-d} \,dz 
    \asymp |x|^{2-\beta}.
\end{align}
This similarly gives $J_{1,2} \asymp \int_{|x| \le 1} |x|^{-\beta} \cdot |x|^{2-\beta} \,dx = \int_{|x| \le 1} |x|^{2-2\beta} \,dx$.

Third, for $y \in B_3$, the triangle inequality ensures $|x-y| \asymp |y|$, so the kernel behaves as $|y|^{2-d}$. 
Since the integrands are non-negative, we apply Tonelli's theorem to exchange the order of integration. For a fixed $y$, let $B_3(y) = \{ x : (x,y) \in B_3 \}$ be the $x$-section of the integration domain. We rigorously evaluate the inner integral $\int_{B_3(y)} |x|^{-\beta} \,dx$ using strict integral bounds.

For the upper bound, the condition $|y| \ge |x|/2$ combined with the overarching domain restriction $x \in D_1$ ($|x| \le 1$) implies $B_3(y) \subset \{x : |x| \le \min(1, 2|y|)\}$. Since the integrand is strictly positive, we can safely enlarge the integration domain to the full ball $\{x : |x| \le 2|y|\}$ to obtain a valid upper bound:
\begin{align}
    \int_{B_3(y)} |x|^{-\beta} \,dx 
    \le \int_{|x| \le 2|y|} |x|^{-\beta} \,dx 
    = \omega_d \int_0^{2|y|} r^{d-1-\beta} \,dr 
    = \frac{\omega_d \, 2^{d-\beta}}{d-\beta} |y|^{d-\beta}.
\end{align}

For the lower bound, we restrict the domain of integration to $\{x : |x| \le |y|/2\}$. If we assume $|y| \le 1/2$, any $x$ in this restricted domain satisfies $|x-y| \ge |y| - |x| \ge |y|/2 \ge |x|/2$. Moreover, $|x-y| \le |y| + |x| \le 3|y|/2 \le 3/4 < 1$. Thus, $\{x : |x| \le |y|/2\} \subset B_3(y)$ holds for $|y| \le 1/2$, giving:
\begin{align}
    \int_{B_3(y)} |x|^{-\beta} \,dx 
    \ge \int_{|x| \le |y|/2} |x|^{-\beta} \,dx 
    = \omega_d \int_0^{|y|/2} r^{d-1-\beta} \,dr 
    = \frac{\omega_d \, (1/2)^{d-\beta}}{d-\beta} |y|^{d-\beta}.
\end{align}

These exact integral evaluations demonstrate that, for $|y| \le 1/2$, the inner integral is bounded both above and below by constant multiples of $|y|^{d-\beta}$. (The contribution from the annulus $1/2 < |y| \le 1$ is finite and does not affect the singularity). Consequently, we have the exact asymptotic equivalence:
\begin{align}
    \int_{B_3(y)} |x|^{-\beta} \,dx \asymp |y|^{d-\beta}.
\end{align}

Substituting this precise integral evaluation back into the outer integral yields:
\begin{align}
    J_{1,3} &\asymp \int_{|y| \le 1} |y|^{2-d-\beta} \left( \int_{B_3(y)} |x|^{-\beta} \,dx \right) dy \notag \\
            &\asymp \int_{|y| \le 1} |y|^{2-d-\beta} \cdot |y|^{d-\beta} \,dy \notag \\
            &= \int_{|y| \le 1} |y|^{2-2\beta} \,dy.
\end{align}

Combining all three components, the total integral $J_1$ behaves exactly as:
\begin{align}
\label{eq:J1final}
  J_{1} \asymp J_{1,1} + J_{1,2} + J_{1,3} \asymp \int_{|x| \leq 1} |x|^{2-2\beta} \,dx 
        = \omega_d \int_0^1 r^{d+1-2\beta} \,dr.
\end{align}
This integral converges if and only if the exponent $d+1-2\beta > -1$, which yields exactly the condition:
\begin{align}
  J_1 < \infty \iff \beta < \frac{d+2}{2}.
\end{align}
\item \text{Estimate of $J_2$ (short-range, away from the origin).} \\
Since $(x,y) \in D_2$ implies $|x| > 1$, $|y| > 1$, and $|x-y| \leq 1$, we again have $R_1(x,y) \asymp |x-y|^{2-d}$. 
Furthermore, $|x-y| \leq 1$ and $|x| > 1$ imply $|y| \geq |x| - 1 > |x|/2$, so $|y| \asymp |x|$. Therefore, the inner integral evaluates to:
\begin{align}
  \int_{\substack{|y|>1 \\ |y-x| \leq 1}} |x-y|^{2-d}|y|^{-\beta}\,dy
  \asymp |x|^{-\beta} \int_{|z| \leq 1}|z|^{2-d}\,dz
  \asymp |x|^{-\beta},
\end{align}
which leads to
\begin{align}
\label{eq:J2final}
  J_2 \asymp \int_{|x|>1} |x|^{-\beta} \cdot |x|^{-\beta}\, dx
           = \omega_d \int_1^{\infty} r^{d-1-2\beta}\, dr.
\end{align}
This converges if and only if the exponent $d-1-2\beta < -1$:
\begin{align}
  J_2 < \infty \iff \beta > \frac{d}{2}.
\end{align}
\item \text{Estimate of $J_3$ (long-range).} \\
For $(x,y) \in D_3$, we have $|x-y| > 1$, where the kernel exhibits exponential decay. To evaluate the convergence, we can write $R_1(x,y) \asymp e^{-c|x-y|}$ for some effective constant $c > 0$. Setting $z = y - x$, the integral becomes:
\begin{equation}
    J_3 \asymp \int_{\mathbb{R}^d}|x|^{-\beta} \left(\int_{|z|>1} e^{-c|z|}|x+z|^{-\beta}\,dz\right)dx =: \int_{\mathbb{R}^d}|x|^{-\beta} I(x)\,dx.
\end{equation}
We evaluate the inner integral $I(x)$ by dividing the domain of $x$ into a compact region $\{|x| \le 2\}$ and a long-range region $\{|x| > 2\}$.

For $|x| \le 2$, the singularity at $z = -x$ is integrable since $\mu \in \mathcal{S}_0$ implies $\beta < d$, and the exponential factor ensures convergence at infinity. Thus, $I(x)$ is a strictly positive, continuous function of $x$ on the compact set $\{|x| \le 2\}$, which implies $I(x) \asymp 1$. The contribution to $J_3$ from this region is finite:
\begin{align}
    \int_{|x|\le 2} |x|^{-\beta} I(x) \,dx \asymp \int_{|x|\le 2} |x|^{-\beta} \,dx < \infty \quad (\text{for } \beta < d).
\end{align}

For $|x| > 2$, we split the $z$-integration domain into a main body $\{1 < |z| \le |x|/2\}$ and a tail $\{|z| > |x|/2\}$.
On the main body $\{1 < |z| \le |x|/2\}$, the triangle inequalities yield $|x|/2 \le |x+z| \le 3|x|/2$, guaranteeing that $|x+z|^{-\beta} \asymp |x|^{-\beta}$. Thus, this region provides both upper and lower bounds:
\begin{align}
    \int_{1 < |z| \le |x|/2} e^{-c|z|} |x+z|^{-\beta} \,dz 
    \asymp |x|^{-\beta} \int_{1 < |z| \le |x|/2} e^{-c|z|} \,dz 
    \asymp |x|^{-\beta}.
\end{align}
(The last equivalence holds because the integral of $e^{-c|z|}$ over $\{1 < |z| \le |x|/2\}$ is strictly bounded away from zero and converges to a finite constant as $|x| \to \infty$).

On the tail region $\{|z| > |x|/2\}$, we extract an upper bound by splitting the exponential factor $e^{-c|z|} \le e^{-c|x|/4} e^{-c|z|/2}$:
\begin{align}
    \int_{|z|>|x|/2} e^{-c|z|} |x+z|^{-\beta} \,dz 
    \le e^{-c|x|/4} \int_{\mathbb{R}^d} e^{-c|z|/2} |x+z|^{-\beta} \,dz 
    \lesssim e^{-c|x|/4}.
\end{align}
Since the exponential decay $e^{-c|x|/4}$ approaches zero faster than any polynomial, it is $o(|x|^{-\beta})$ and gets absorbed into the main term.

Therefore, for all $|x| > 2$, we establish the exact asymptotic behavior $I(x) \asymp |x|^{-\beta}$. The long-range contribution to $J_3$ behaves as:
\begin{equation}
    \int_{|x|>2} |x|^{-\beta} I(x)\,dx \asymp \int_{|x|>2} |x|^{-2\beta}\,dx \asymp \int_2^\infty r^{d-1-2\beta}\,dr.
\end{equation}
This integral converges if and only if $d-1-2\beta < -1$. Hence, we obtain the condition:
\begin{align}
  J_3 < \infty \iff \beta > \frac{d}{2}.
\end{align}
\end{enumerate}

Combining the estimates for $J_1$, $J_2$, and $J_3$, we obtain the following convergence conditions:
\begin{align}
  \begin{cases}
    J_1 < \infty &\iff \beta < \dfrac{d+2}{2}, \medskip \\
    J_2 < \infty &\iff \beta > \dfrac{d}{2}, \medskip \\
    J_3 < \infty &\iff \beta > \dfrac{d}{2}.
  \end{cases}
\end{align}
The binding constraints are exactly given by these inequalities. Therefore, the singular measure $\mu(dx) = |x|^{-\beta}dx$ belongs to the finite energy class $\mathcal{S}_0$ if and only if
\begin{align}
\label{eq:S0}
 \frac{d}{2} < \beta < \frac{d+2}{2}.
\end{align}

Next let us consider a condition on $\beta$ for the Kato class.
For dimensions $d \ge 3$, a measure $\mu$ belongs to the Kato class $\mathcal{K}$ 
if and only if it satisfies the local uniform integrability condition with respect to the Newtonian potential:
\begin{align}
\label{eq:kato_def}
  \lim_{a \to 0} \sup_{x \in \mathbb{R}^d} \int_{|x-y| \le a} |x-y|^{2-d} \mu(dy) = 0.
\end{align}
Put
\begin{align}
\label{eq:4}
  I(x,a) &:= \int_{|x-y| \le a}^{ } |x-y|^{2-d} |y|^{-\beta} dy. 
\end{align}
We divide the argument into several parts.
\begin{enumerate}
\item $\beta < 0$: Let $\alpha = -\beta > 0$,
  so that the measure is $\mu(dy) = |y|^\alpha \,dy$.
  For any $y \in B(x, a)$, the reverse triangle inequality gives $|y| \ge |x| - |x-y| \ge |x| - a$.
  Assuming $|x| > a$, we can bound the integral from below:
  \begin{align}
    I(x,a) 
    &= \int_{|x-y| \le a} |x-y|^{2-d} |y|^{\alpha} \,dy 
      \ge \int_{|x-y| \le a} |x-y|^{2-d} (|x| - a)^{\alpha} \,dy \\
    &= (|x| - a)^\alpha \omega_d \int_0^{a} r^{2-d} r^{d-1} \,dr
      = \frac{\omega_d \, a^2}{2} (|x| - a)^\alpha.
  \end{align}
  Since $\alpha > 0$, taking the supremum over all $x \in \mathbb{R}^d$ forces $|x| \to \infty$, which yields:
  \begin{align}
    \sup_{x \in \mathbb{R}^d} \int_{|x-y| \le a} |x-y|^{2-d} |y|^\alpha \,dy
    \ge \lim_{|x| \to \infty} \frac{\omega_d \, a^2}{2} (|x| - a)^\alpha = \infty.
  \end{align}
  This implies that $\mu \not\in \mathcal{K}$.
\item $\beta = 0$: It is clear that $\mu$ belongs to $\mathcal{K}$.
\item $0 < \beta < 2$: By Riesz' rearrangement inequality, we obtain that for some constant $C > 0$, 
  \begin{align}
    \label{eq:3}
    \sup_{x \in \mathbb{R}^{d}} I(x,a)
    &= \sup_{x \in \mathbb{R}^{d}} \int_{|x-y| \le a}^{ } |x-y|^{2-d} |y|^{-\beta} dy \\
    &\le \sup_{x \in \mathbb{R}^{d}} \int_{|y| \le a}^{ } |y|^{2-d-\beta} dy
      \le C a^{2-\beta} \to 0 \quad \text{as}\ a \to 0.
  \end{align}
  Hence it holds that $\mu \in \mathcal{K}$.
\item $\beta \ge 2$: we see
\begin{align}
\label{eq:5}
  \sup_{x \in \mathbb{R}^{d}} I(x,a)
  &\ge I(0,a) = \int_{|y| \le a}^{ } |y|^{2-d} |y|^{-\beta} dy = \omega_{d} \int_0^{ a} r^{1-\beta} dr = \infty, 
\end{align}
so $\mu \not\in \mathcal{K}$. 
\end{enumerate}
In conclusion, we have
\begin{align}
\label{eq:Kato}
  \mu \in \mathcal{K} \iff 0 \le \beta < 2.
\end{align}

Combining \eqref{eq:S0} and \eqref{eq:Kato},
we arrive at the following conclusion:
for $d=3$ with $3/2 < \beta < 2$, we have $\mu \in \mathcal{K} \cap \mathcal{S}_0$,
whereas for $d \ge 4$, the intersection becomes empty
($\mathcal{K} \cap \mathcal{S}_0 = \emptyset$).
It is instructive to compare this result with Lemma \ref{K00isS00}.
\end{example}

\section{Topologies on the Dynkin class and the Kato class}\label{sec_Miyadera}
In this section, we consider a suitable metric on the Dynkin class  \(\mathcal{D}\) and Kato class \(\mathcal{K}\). Since a Kato class measure \(\mu\in \mathcal{K}\) is characterized by \(\lim_{\alpha \to \infty}\|R_\alpha \mu\|_\infty =0\), one might think \(\|R_1\mu-R_1\nu\|_{\infty}\) for \(\mu, \nu \in \mathcal{K}\) is an appropriate metric, but as the following example shows, this metric is not complete.

\begin{example}\label{counterexofmetric1}
We show the metric \(d(\mu, \nu):=\|R_1\mu-R_1\nu\|_{\infty}\) on \(\mathcal{K}\) is not complete in general. Let \(E\) be the unit disk \(B(0;1)\) of \(\mathbb{R}^2\), \(m\) be the Lebesgue measure and \(X\) be an absorbed Brownian motion on \(E\). Denote by \(\tau\) the first exit time from \(E\), and \(\tau_n\) the first exit time from \(B(0;1-\frac{1}{n})\), the open ball centred at \(0\) with radius \(1-\frac{1}{n}\). We remark that  \(\tau = \zeta\), where \(\zeta\) is the lifetime of \(X\). Define \(u_n:=\mathbb{E}_x[e^{-\tau_n}]\) and \(u:=\mathbb{E}_x[e^{-\tau}]\). Then, by \cite[Lemma 2.3.10, Corollary 3.2.3]{CF12}, there exists a measure \(\mu_n\) called a \(1\)-order equilibrium measure of \(B(0;1-\frac{1}{n})^c\) such that \(\mu_n \in \mathcal{S}_0\) and \(u_n\) is a quasi-continuous version of \(R_1\mu_n\). Hence \(u_n(x)=R_1\mu_n(x)\). Since \(u_n\) is bounded and \(\mu_n(E)={\rm Cap}_1(B(0;1-1/n)^c)<\infty\), it holds that \(\mu_n \in \mathcal{S}_{00}.\) The absorbing Brownian motion \(X\) is a Feller process (\cite{C86}) and \(u_n\) is continuous (\cite[Theorem 2.1]{KKT17}), by Proposition \ref{FellerS00isK}, \(\mu_n\) is a Kato class measure.

Denote \(I_0\) by the modified Bessel function of order \(0\),
\[I_0(x):=\sum_{k=0}^\infty \frac{(\frac{x}{2})^{2k}}{(k!)^2}.\] Since \(u_n(x)\) (resp. \(u(x)\)) is a Laplace transform of the first hitting time of \(1-1/n\) (resp. 1) for the  $2$-dimensional Bessel process  (equivalently, Bessel process of order $0$) starting at \(|x|\), we have
\[u_n(x)= \begin{cases}\frac{I_0(\sqrt{2}|x|)}{I_0(\sqrt{2} (1-\frac{1}{n}))} &\text{\ for\ }|x|\leq 1-\frac{1}{n},\\
\hspace{7mm}1 &\text{\ for\ } 1-\frac{1}{n}<|x|<1, \end{cases} \]
and
\[u(x)=\frac{I_0(\sqrt{2}|x|)}{I_0(\sqrt{2})}, \quad |x| \le 1.\]
For example, see \cite[p.513]{BS02} for these formulae. By the continuity and the monotonicity of \(I_0\), \(u_n=R_1\mu_n\) converges to \(u\) uniformly on \(E\).

However, \(u\) is not a potential of any smooth measure. Indeed, if there exists a smooth measure \(\mu\) with corresponding PCAF \(A\) such that \(u=R_1\mu\), then the following identity must hold:
\begin{eqnarray*}
u(x)&=&\mathbb{E}_x[e^{-\tau}\mathbb{E}_{x}\left[e^{-\tau}\circ \theta_{\tau}|\mathcal{F}_{\tau}]\right]\\
&=&\mathbb{E}_x[e^{-\tau}u(X_{\tau})]\\
&=& \mathbb{E}_x\left[e^{-\tau}\mathbb{E}_{X_{\tau}}\left[\int_0^\zeta e^{-t}dA_t\right]\right]\\
&=&0,
\end{eqnarray*}
which is a contradiction.
\end{example}

\begin{remark}
\begin{enumerate}
\item In Example \ref{counterexofmetric1}, since the topological support of \(\mu_n\) is \(B(0;1-1/n)^c\), \(\mu_n\) converges vaguely to \(0\). Hence the metric \(d_{\mathcal{K}}(\mu, \nu):= \|R_1 \mu-R_1\nu\|_q + d_{C_c^*}(\mu, \nu)\) on \(\mathcal{K}\) is also not complete, where \(d_{C_c^*}\) is a metric inducing the vague topology. 
\item Noda \cite{N26} established convergence results for PCAFs associated
with smooth measures in the local Kato class under the uniform local
Kato-type condition. His framework allows underlying spaces, processes, and heat kernels to vary, providing a flexible setting for specific convergence problems. However, the primary focus is on establishing convergence in concrete situations rather than on introducing a topology on the space of measures.
\end{enumerate}
\end{remark}

The following topology is inspired by the Miyadera norm in \cite{OSSV96}.
\begin{definition}
For \(\mu, \nu \in \mathcal{D}\), we set
{
$$ 
d_\mathcal{D}(\mu, \nu) := \big\| R_1 |\mu-\nu|  \big\|_\infty := \big\| R_1 (\mu-\nu)_+ + R_1 (\mu-\nu)_- \big\|_\infty, 
$$} 
where \((\mu-\nu)_+\) and \((\mu-\nu)_-\) are the positive and negative part of \(\mu-\nu\) obtained by the Jordan decomposition theorem. 
We call \(d_\mathcal{D}\) the {\it Miyadera metric}.
\end{definition}

We first verify the well-definedness of the metric $d_D$. By the Jordan decomposition theorem, the positive part \((\mu-\nu)_+\) and the negative part \((\mu-\nu)_-\) of \(\mu-\nu\) are unique non-negative measures possessing a measurable set \(E_+\) such that \((\mu-\nu)_+(E_+^c)= (\mu-\nu)_-(E_+) = 0\). Hence, for a nest \(\{F_k\}_k\) satisfying \(\mu(F_k)<\infty\) and \(\nu(F_k)<\infty\), it holds that \((\mu-\nu)_+(F_k) = (\mu-\nu)(F_k \cap E_+) <\mu(F_k)+\nu(F_k) <\infty \), and for a set \(A\) with \(\Capa(A)=0\), we have \((\mu-\nu)_+(A)= (\mu-\nu)(A \cap E_+) \leq \mu(A)+\nu(A) =0\). The same statements holds for \((\mu-\nu)_-\) and so \((\mu-\nu)_+\) and \((\mu-\nu)_-\) are smooth measures.

\begin{remark}
According to \cite{OSSV96}, the Miyadera norm $\|B\|_P$ of an operator $B$ on a Banach space $(\mathcal{B}, \|\cdot\|_{\mathcal{B}})$ is defined as
\[\|B\|_P := \sup_{f \in \mathcal{D}(L), \|f\|_\mathcal{B}\leq 1} \int_0^1 \|BP_tf\|_\mathcal{B} \,dt\]
where \(P=\|P_t\|_t\) is a strongly continuous semigroup on \(\mathcal{B}\) and \(L\) is its associated generator. For our situation, we set \(\mathcal{B}:=L^1(E;m)\) and \(B(f):=Vf\) for a function \(V\), and it holds that
\[\|V\|_P = \sup_{f \in \mathcal{D}(L), \|f\|_{L^1}\leq 1} \int_E \int_0^1 P_t|V|\,dt\,|f|\,dm = \sup_x \int_0^1 P_t|V|(x)\,dt\]
by the symmetry for \(P_t\) and the \(L^1\)-\(L^\infty\) duality. Similarly to the proof of Proposition \ref{FellerS00isK}, we have \((1-e^{-1})\|R_1|V|\|_\infty \leq \|\int_0^1 P_t|V|\,dt\|_\infty\). Combining this with \(\int_0^1 P_t|V|dt \leq e \int_0^1 e^{-t}P_t|V|dt \leq eR_1|V|\), \(\|R_1|V|\|_\infty\) is comparable to the Miyadera norm \(\|V\|_P \) for a function \(V\). Hence, the set of all functions with \(\|V\|_P<\infty\) coincides with the set of Dynkin class measures absolutely continuous with respect to the underlying measure. Moreover, the set of all operators with bounded Miyadera norm is complete, and the family corresponding to the set of all Kato class measures (absolutely continuous with respect to $m$) is closed under the Miyadera norm \cite[Proposition 3.1]{V95}. Since measures cannot be approximated by functions with respect to the Miyadera metric, the Miyadera norm for measures is not discussed in \cite{OSSV96}; the authors instead employed a different method to study Feller properties for measure-perturbed generators.
\end{remark}

Before considering the completeness with respect to \(d_\mathcal{D}\), we see the following lemma.
\begin{lemma}\label{pot_lem}
Let $\mu_n, \mu$ be Radon measures on $E$. If $\mu_n$ converges vaguely to $\mu$, then the inequality
\[\int_E f \,d\mu \leq \varliminf_n \int_E f \,d\mu_n\]
        holds for any non-negative lower semi-continuous function $f$ on $E$.
\end{lemma}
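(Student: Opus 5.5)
The plan is to approximate $f$ from below by nonnegative compactly supported continuous functions and use the vague convergence for each of them. Since $E$ is a locally compact separable metric space, we may write a nonnegative lower semi-continuous function $f$ as the pointwise increasing limit of functions in $C_c(E)$. Concretely, we take an increasing sequence of relatively compact open sets $\{U_k\}$ with $\overline{U}_k \subset U_{k+1}$ and $\bigcup_k U_k = E$, and cutoff functions $\chi_k \in C_c(E)$ with $0\le \chi_k\le 1$ and $\chi_k=1$ on $U_k$. For the lower semi-continuous part we use the standard Baire approximation, for instance the Lipschitz regularisations
\[
f_k(x) := \inf_{y\in E}\bigl( (f(y)\wedge k) + k\,\rho(x,y)\bigr),
\]
where $\rho$ is the metric on $E$. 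Each $f_k$ is continuous, the sequence increases in $k$, and $f_k \uparrow f$ pointwise by lower semi-continuity. We then set $g_k := f_k\,\chi_k$. This function lies in $C_c(E)$ and satisfies $0\le g_k\le f$, and $g_k\uparrow f$ pointwise because the $\chi_k$ also increase to $1$. Choosing the $\chi_k$ increasing (e.g.\ $\chi_{k+1}=1$ on the support of $\chi_k$) ensures monotonicity of the product.

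With this approximation fixed, the argument is short. For each fixed $k$, vague convergence gives
\[
\int_E g_k\,d\mu = \lim_{n\to\infty}\int_E g_k\,d\mu_n \le \varliminf_{n\to\infty}\int_E f\,d\mu_n,
\]
where the inequality uses $g_k\le f$ and the positivity of $\mu_n$. Letting $k\to\infty$ and applying the monotone convergence theorem on the left-hand side yields $\int_E f\,d\mu \le \varliminf_n \int_E f\,d\mu_n$. No finiteness of the integrals is needed, since both sides may equal $+\infty$ and the inequalities remain valid in $[0,\infty]$.

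The only step requiring care is the construction of the increasing sequence in $C_c(E)$ converging to $f$. Specifically, one must check that the inf-convolution $f_k$ is finite, continuous and increases to $f$; here the truncation $f\wedge k$ and lower semi-continuity are used. One must also check that multiplying by the cutoffs preserves monotonicity. Everything else is immediate from the definition of vague convergence and monotone convergence.
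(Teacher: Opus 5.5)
Your proof is correct and follows the same approach as the paper: for $C_c$ functions $0\le\varphi\le f$, vague convergence gives $\int\varphi\,d\mu=\lim_n\int\varphi\,d\mu_n\le\varliminf_n\int f\,d\mu_n$, and you then pass to the supremum over such $\varphi$. The only difference is in the approximation step: the paper quotes the identity $\int f\,d\mu=\sup\{\int\varphi\,d\mu:\varphi\in C_c,\ 0\le\varphi\le f\}$ for Radon $\mu$ from Folland, whereas you construct an explicit sequence $g_k\uparrow f$ in $C_c(E)$ (using metrizability and $\sigma$-compactness) and apply monotone convergence, which is equally valid in this setting.
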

\begin{proof}
By \cite[Proposition 7.1.1.e]{F99}, for a non-negative lower semi-continuous function \(f\), it holds that
\[\int_E f \,d\mu = \sup{\left\{\int_E \varphi\,d\mu : \varphi \in C_c, 0\leq \varphi \leq f\right\}}.\]
For \(\varphi \in C_c\) satisfying \(0\leq \varphi \leq f\), we have
\begin{eqnarray*}
\int_E \varphi \, d\mu = \varliminf_n \int_E \varphi \, d\mu_n \leq \varliminf_n \int_E f \, d\mu_n
\end{eqnarray*}
and by taking the supremum, the proof is completed.
\end{proof}

\begin{theorem}\label{Dynkin_complete}
Suppose that \(R_1f\) has a lower semi-continuous version for any non-negative function \(f\in C_c.\) Then \((\mathcal{D}, d_{\mathcal{D}})\) is a complete metric space.
\end{theorem}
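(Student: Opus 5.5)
Let \(\{\mu_n\}\subset\mathcal{D}\) be a \(d_{\mathcal{D}}\)-Cauchy sequence. The plan has four steps: get uniform local mass bounds, extract a vague limit \(\mu\), check \(\mu\in\mathcal{D}\) using the lower semi-continuity hypothesis through Lemma \ref{pot_lem}, and finally show \(d_{\mathcal{D}}(\mu_n,\mu)\to 0\). Positivity and symmetry of \(d_\mathcal{D}\) are immediate. The triangle inequality follows from \(|\mu-\lambda|\le|\mu-\nu|+|\nu-\lambda|\) and the linearity of \(R_1\) in the measure. Definiteness follows since \(R_1|\mu-\nu|=0\) q.e. forces \(|\mu-\nu|=0\), because a nonzero smooth measure has a nonzero potential.

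\textbf{Step 1 (local mass bounds).} Since \(\|R_1\mu_n\|_\infty\le\|R_1\mu_{1}\|_\infty+\sup_k d_{\mathcal{D}}(\mu_k,\mu_1)\), the potentials are uniformly bounded. Remark \ref{rem:3.4}(1) (from Lemma \ref{Lem:smooth-01} and Proposition \ref{Prop:smooth}) gives \(\nu(K)\le \Capa(K)\|R_1\nu\|_\infty\) for smooth \(\nu\) with bounded potential. Applying this with \(\nu=\mu_n\) gives \(\sup_n\mu_n(K)<\infty\) for every compact \(K\). Applying it with \(\nu=|\mu_n-\mu_m|\) gives \(|\mu_n-\mu_m|(K)\le\Capa(K)\,d_\mathcal{D}(\mu_n,\mu_m)\). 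Hence \(\int\varphi\,d\mu_n\) is Cauchy for every \(\varphi\in C_c\), and \(\mu_n\) converges vaguely, indeed in total variation on each compact, to a positive Radon measure \(\mu\).

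\textbf{Step 2 (\(\mu\in\mathcal{D}\)).} Fix \(f\in C_c\), \(f\ge0\). By symmetry of the resolvent and the Revuz correspondence, \(\int R_1f\,d\nu=\int f\,R_1\nu\,dm\). Let \(R_1f\) be its lower semi-continuous version. Lemma \ref{pot_lem} then gives
\[\int f R_1\mu\,dm=\int R_1f\,d\mu\le\varliminf_n\int R_1 f\,d\mu_n=\varliminf_n\int fR_1\mu_n\,dm\le \sup_n\|R_1\mu_n\|_\infty\|f\|_{L^1}.\]
Taking the supremum over such \(f\) yields \(\|R_1\mu\|_\infty<\infty\). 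It remains to check that \(\mu\) is smooth, i.e. that it charges no capacity-zero set. This comes from \(\mu(K)\le\varliminf\mu_n(\text{neighbourhoods})\) combined with the capacitary bound \(\mu_n(G)\le C\,\Capa(G)\) for open \(G\), which is uniform in \(n\), then outer regularity of capacity. So \(\mu\in\mathcal{D}\).

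\textbf{Step 3 (convergence in \(d_\mathcal{D}\)).} This is the main obstacle. The difficulty is that \(|\mu_n-\mu|\) need not be a vague limit of \(|\mu_n-\mu_m|\), since taking absolute values does not commute with limits. I would fix \(n\) and apply the same lower semi-continuity argument to the positive measures \(|\mu_n-\mu_m|\) as \(m\to\infty\). By Step 1 these are locally bounded, so along a subsequence \(|\mu_n-\mu_m|\to\lambda_n\) vaguely. Since for each \(\varphi\in C_c\), \(\varphi\ge0\),
\[\int\varphi\,d|\mu_n-\mu|\le\varliminf_m\int\varphi\,d|\mu_n-\mu_m|,\]
we get \(\lambda_n\ge|\mu_n-\mu|\). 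This inequality follows from the dual formula for the total variation, \(\int\varphi\,d|\sigma|=\sup\{\int\psi\,d\sigma:|\psi|\le\varphi,\ \psi\in C_c\}\), together with the vague convergence of \(\mu_n-\mu_m\to\mu_n-\mu\). Then, for \(f\in C_c\), \(f\ge0\), the computation of Step 2 gives
\[\int fR_1|\mu_n-\mu|\,dm\le\int R_1f\,d\lambda_n\le\varliminf_m\int fR_1|\mu_n-\mu_m|\,dm\le\|f\|_{L^1}\sup_{m\ge n}d_\mathcal{D}(\mu_n,\mu_m).\]
Hence \(d_\mathcal{D}(\mu_n,\mu)\le\sup_{m\ge n}d_\mathcal{D}(\mu_n,\mu_m)\to0\). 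The delicate point is the passage \(\int R_1f\,d|\mu_n-\mu|\le\int R_1f\,d\lambda_n\), since \(R_1f\) is only lower semi-continuous; I would justify it by approximating \(R_1f\) from below by \(C_c\) functions, exactly as in the proof of Lemma \ref{pot_lem}.
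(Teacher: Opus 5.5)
Your proof is correct and follows essentially the same route as the paper. The shared steps are: local total-variation bounds $|\mu_n-\mu_m|(K)\le \mathrm{Cap}(K)\,d_{\mathcal{D}}(\mu_n,\mu_m)$ (the paper gets the same bound from Stollmann--Voigt), a vague limit $\mu$, smoothness from $\mu(G)\le M\,\mathrm{Cap}(G)$ on open sets plus outer regularity, and finally $L^1$--$L^\infty$ duality with the Revuz correspondence and Lemma \ref{pot_lem} applied to the lower semi-continuous version of $R_1 f$. Your subsequential limit $\lambda_n\ge|\mu_n-\mu|$ in Step 3 works but is not needed: you already have $|\mu_m-\mu|(K)\to 0$ on compacts and $\bigl|\int\varphi\,d|\mu_n-\mu_m|-\int\varphi\,d|\mu_n-\mu|\bigr|\le\int\varphi\,d|\mu_m-\mu|$ for $0\le\varphi\in C_c$, so $|\mu_n-\mu_m|\to|\mu_n-\mu|$ vaguely directly, which is what the paper uses implicitly (also, smoothness of $\mu$ should be established before you invoke $R_1\mu$ in Step 2).
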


\begin{proof}
It is easy to see that \(d_{\mathcal{D}}\) is a pseudo metric on \(\mathcal{D}\). If \(R_1|\mu-\nu|=0\), then \(A^{(\mu-\nu)_+}\) and \(A^{(\mu-\nu)_-}\) are \(0\) and so are \(A^{\mu-\nu}\) and \(\mu-\nu\). Hence \((\mathcal{D}, d_{\mathcal{D}})\) is a metric space.

For any compact set \(K\), we take a non-negative function \(\varphi \in \mathcal{F}\cap C_c\) satisfying \(\varphi =1\) on \(K\). Then, by Stollmann--Voigt's inequality (Proposition \ref{SVineq}), we have
\begin{eqnarray}
|\mu-\nu|(K) \leq \int_E \varphi^2 \,d|\mu-\nu| \leq \|R_1|\mu-\nu|\|_\infty\,\mathcal{E}(\varphi, \varphi) \label{eq:Dynkin_complete_1}
\end{eqnarray}
for any \(\mu, \nu \in \mathcal{D}\).

We take a \(d_{\mathcal{D}}\)-Cauchy sequence \(\{\mu_n\}_n \subset \mathcal{D}\) and \(f\in C_c\). By the regularity of a Dirichlet form, for any \(\varepsilon>0\), we can take \(f_\varepsilon \in \mathcal{F}\cap C_c\) such that \(\|f-f_\varepsilon\|_\infty \leq \varepsilon .\) We set \(K:=\) supp\(f\) and by the Stollmann-Voigt inequality and (\ref{eq:Dynkin_complete_1}), we have
\begin{eqnarray*}
\left|\int_E f\, d\mu_n - \int_E f\, d\mu_m \right| &\leq & \int_E |f|\, d|\mu_n - \mu_m|\\
& \leq & \sqrt{\int_E |f|^2\, d|\mu_n - \mu_m|}\,\sqrt{|\mu_n - \mu_m|(K)}\\
&\leq & \big\|R_1 |\mu_n-\mu_m| \big\|_\infty \, \sqrt{\mathcal{E}_1(f,f)}\, \sqrt{\mathcal{E}_1(\varphi, \varphi)} .
\end{eqnarray*}
Hence \(\{\mu_n\}_n\) is a Cauchy sequence with respect to the vague topology and by \cite[Theorem A 2.3]{K02}, there exists a Radon measure \(\mu\) such that \(\mu_n\) converges vaguely to \(\mu\). By (\ref{eq:Dynkin_complete_1}), \(|\mu_n-\mu|\) also converges to \(0\) vaguely. For \(M:=\sup_n R_1\mu_n < \infty\) and the \(1\)-equilibrium potential \(e_A\) of a relatively compact open set \(A\), it holds that
\[\mu(A)\leq \varliminf_n \mu_n(A) \leq \varliminf_n \int |e_A|^2 d\mu_n \leq \mathcal{E}_1(e_A,e_A)\, M^2 =\Capa(A)\, M^2.\]
Since \(E\) is a locally compact space, for any open set \(U\), there exists a sequence of relatively compact open sets \(\{A_n\}_n\) such that \(A_n \nearrow U\). Since \(\mu\) is a Radon measure, \(\mu(A_n)\) converges to \(\mu(U)\), so \(\mu(U)\leq \Capa(U)\, M^2\) holds. By the outer regularity of the capacity, \(\mu\) charges no set of zero capacity, and so \(\mu\) is smooth.

Without loss of generality, we may assume that \(\mu_n\) is a complete measure. By \cite[Theorem 3.3.14]{R87}, \(C_c\) is dense in \(L^1(E;m)\). By the duality of \(L^1\) and \(L^\infty\) spaces, we have
\begin{eqnarray*}
\|R_1|\mu_n-\mu|\|_\infty &=& \sup_{\|f\|_{L^1}\leq 1, f\in C_c} \int_E |f|\,R_1|\mu_n-\mu|\,dm\\
&=& \sup_{\|f\|_{L^1}\leq 1, f\in C_c} \int_E R_1|f|\,d|\mu_n-\mu|\\
&\leq & \sup_{\|f\|_{L^1}\leq 1, f\in C_c} \varliminf_{m\to \infty} \int_E R_1|f|\,d|\mu_n-\mu_m|\\
&\leq & \varliminf_{m\to \infty} \sup_{\|f\|_{L^1}\leq 1, f\in C_c}  \int_E R_1|f|\,d|\mu_n-\mu_m|\\
&=& \varliminf_{m\to \infty} \big\|R_1|\mu_n-\mu_m|\big\|_\infty.
\end{eqnarray*}
In the first inequality above, we used the assumption that \(R_1|f|\) is a lower semi-continuous and Lemma \ref{pot_lem}. Hence \(\|R_1|\mu_n-\mu|\|_\infty\) converges to \(0\) as \(n\to \infty\).
\end{proof}

In the proof of Theorem \ref{Dynkin_complete}, we have already seen the following proposition.
\begin{proposition}
For \(\mu_n, \mu \in \mathcal{D}\), if \(\mu_n\) converges to \(\mu\) in \(d_\mathcal{D}\), then so does vaguely. 
\end{proposition}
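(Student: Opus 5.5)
The plan is to reduce vague convergence to the pointwise estimate \eqref{eq:Dynkin_complete_1} from the proof of Theorem \ref{Dynkin_complete}, combined with the Stollmann--Voigt inequality (Proposition \ref{SVineq}), and then to pass from test functions in \(\mathcal{F}\cap C_c\) to all of \(C_c\) by uniform approximation, using the regularity of \((\mathcal{E},\mathcal{F})\).

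Fix \(f\in C_c\) and let \(K:=\mathrm{supp}\, f\). Choose a nonnegative \(\varphi\in\mathcal{F}\cap C_c\) with \(\varphi=1\) on \(K\). Applied to the pair \(\mu_n,\mu\in\mathcal{D}\), \eqref{eq:Dynkin_complete_1} gives
\[
|\mu_n-\mu|(K)\le \|R_1|\mu_n-\mu|\|_\infty\,\mathcal{E}_1(\varphi,\varphi)=d_\mathcal{D}(\mu_n,\mu)\,\mathcal{E}_1(\varphi,\varphi)\longrightarrow 0 .
\]
It follows that
\[
\Big|\int_E f\,d\mu_n-\int_E f\,d\mu\Big|\le \|f\|_\infty\,|\mu_n-\mu|(K)\to 0 .
\]
This bound already suffices: no approximation by \(f_\varepsilon\in\mathcal{F}\cap C_c\) is needed, because the total variation on \(K\) is controlled directly. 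The only points to confirm are that \(\int f\,d\mu_n\) and \(\int f\,d\mu\) are finite and that \(\mu_n,\mu\) are Radon. Both follow from the same inequality applied with \(\nu=0\): \(\mu(K)\le\|R_1\mu\|_\infty\,\mathcal{E}_1(\varphi,\varphi)<\infty\) for every \(\mu\in\mathcal{D}\), so each measure in \(\mathcal{D}\) charges compact sets finitely. Inner regularity then follows as in the proof of Theorem \ref{Thm_loc}.

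I expect the main obstacle to be a point of justification rather than computation. Inequality \eqref{eq:Dynkin_complete_1} applies Proposition \ref{SVineq} to the positive measure \(|\mu_n-\mu|=(\mu_n-\mu)_++(\mu_n-\mu)_-\), so this measure must be smooth. That is exactly what the paragraph verifying the well-definedness of \(d_\mathcal{D}\) establishes, using the Jordan decomposition. Note also that \(\int\varphi^2\,d|\mu_n-\mu|\ge |\mu_n-\mu|(K)\), since \(\varphi=1\) on \(K\). With these two facts in hand, the proposition follows.
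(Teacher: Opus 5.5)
Your proof is correct and follows the paper's own argument: the paper obtains this proposition from \eqref{eq:Dynkin_complete_1} exactly as you do, noting that it forces $|\mu_n-\mu|(K)\to 0$ for every compact $K$, so that $|\mu_n-\mu|\to 0$ vaguely. Your direct bound $\bigl|\int_E f\,d\mu_n-\int_E f\,d\mu\bigr|\le\|f\|_\infty\,|\mu_n-\mu|(K)$ is slightly cleaner than the paper's displayed Cauchy--Schwarz estimate, which uses $\mathcal{E}_1(f,f)$ for an $f\in C_c$ that need not lie in $\mathcal{F}$; you also correctly write $\mathcal{E}_1(\varphi,\varphi)$ where \eqref{eq:Dynkin_complete_1} has $\mathcal{E}(\varphi,\varphi)$.
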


We consider the closedness of subsets of the Dynkin class.
\begin{theorem}\label{Kato_complete}
The Kato class \(\mathcal{K}\) is a closed subset of \((\mathcal{D}, d_{\mathcal{D}})\). In particular, if \(R_1f\) has a lower semi-continuous version for any non-negative function \(f\in C_c\), then \((\mathcal{K}, d_{\mathcal{D}})\) is a complete metric space.
\end{theorem}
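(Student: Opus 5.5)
The plan is to show that a $d_{\mathcal{D}}$-limit of Kato class measures is again Kato class by a direct $\varepsilon/2$ argument at the level of $\alpha$-potentials. Completeness then follows at once from Theorem \ref{Dynkin_complete}, since a closed subset of a complete metric space is complete.

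Let $\mu_n \in \mathcal{K}$ and $\mu \in \mathcal{D}$ with $d_{\mathcal{D}}(\mu_n,\mu)\to 0$. For $\alpha\ge 1$ I will compare $R_\alpha\mu$ with $R_\alpha\mu_n$ pointwise. Since $\mu \le \mu_n + (\mu-\mu_n)_+$ as measures and the Revuz correspondence is additive and order preserving on PCAFs, we have $A^\mu_t \le A^{\mu_n}_t + A^{(\mu-\mu_n)_+}_t$ for q.e. starting point. Taking $\alpha$-potentials gives
\[
R_\alpha\mu \le R_\alpha\mu_n + R_\alpha(\mu-\mu_n)_+ \le R_\alpha\mu_n + R_1|\mu-\mu_n| \quad \text{q.e.},
\]
where the last step uses $e^{-\alpha t}\le e^{-t}$ for $\alpha\ge1$. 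Taking $\|\cdot\|_{\infty}$ (equivalently $\|\cdot\|_{\infty,q}$ by Lemma \ref{potCapalemma}), we obtain $\|R_\alpha\mu\|_\infty \le \|R_\alpha\mu_n\|_\infty + d_{\mathcal{D}}(\mu_n,\mu)$.

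Given $\varepsilon>0$, I first choose $n$ with $d_{\mathcal{D}}(\mu_n,\mu)<\varepsilon$. Since $\mu_n\in\mathcal{K}$, I then choose $\alpha_0$ such that $\|R_\alpha\mu_n\|_\infty<\varepsilon$ for all $\alpha\ge\alpha_0$. This gives $\limsup_{\alpha\to\infty}\|R_\alpha\mu\|_\infty\le 2\varepsilon$. Therefore $\mu\in\mathcal{K}$, and $\mathcal{K}$ is closed in $(\mathcal{D},d_{\mathcal{D}})$.

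The only point needing care is the monotonicity and additivity of PCAFs under the Revuz correspondence for measures that are merely smooth and not of finite energy. This follows from the uniqueness part of the Revuz correspondence: the smooth measure $\mu_n+(\mu-\mu_n)_+$ corresponds to $A^{\mu_n}+A^{(\mu-\mu_n)_+}$. Monotonicity follows similarly, since $\mu_n+(\mu-\mu_n)_+ - \mu = (\mu-\mu_n)_-$ is itself smooth, so the difference of the PCAFs is the PCAF of a nonnegative smooth measure. The smoothness of the Jordan parts was already checked after the definition of $d_{\mathcal{D}}$. With this in place, the second assertion follows from Theorem \ref{Dynkin_complete} under its lower semi-continuity hypothesis.
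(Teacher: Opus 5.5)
Your proposal is correct and follows essentially the paper's route: you compare $R_\alpha\mu$ with $R_\alpha\mu_n$ plus the $\alpha$-potential of the difference, dominate the latter by $R_1|\mu_n-\mu|$, run an $\varepsilon/2$ argument, and inherit completeness from Theorem~\ref{Dynkin_complete}. The differences are cosmetic: you bound $R_\alpha(\mu-\mu_n)_+\le R_1|\mu-\mu_n|$ directly via $e^{-\alpha t}\le e^{-t}$, whereas the paper uses the resolvent equation and loses a harmless factor $2$; you also make explicit the PCAF comparison $A^\mu\le A^{\mu_n}+A^{(\mu-\mu_n)_+}$, which the paper's triangle inequality uses only implicitly.
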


\begin{proof}
We take \(\{\mu_n\}_n \subset \mathcal{K}\) and \(\mu\in\mathcal{D}\) such that \(d_\mathcal{D}(\mu_n, \mu)\) converges to \(0\) as \(n \to \infty.\) 

By the resolvent equation, for \(\alpha >1\), we have
\begin{eqnarray*}
\| R_{\alpha}|\mu_n-\mu| \|_{\infty}&=& \big\| R_1|\mu_n-\mu| -(\alpha -1)R_\alpha R_1 |\mu_n-\mu| \big\|_{\infty}\\
&\leq & \big\| R_1|\mu_n-\mu| \big\|_{\infty} + (\alpha-1) \| R_\alpha 1  \|_{\infty} \big\|R_1 |\mu_n-\mu| \big\|_{\infty}\\
&\leq & \big\| R_1|\mu_n-\mu| \big\|_{\infty}  + \alpha \| R_\alpha 1  \|_{\infty} \big\| R_1|\mu_n-\mu| \big\|_{\infty} \\
&\leq & 2 \big\| R_1|\mu_n-\mu| \big\|_{\infty}.
\end{eqnarray*}
For any \(\varepsilon >0\), we take large \(n\) satisfying \(d_{\mathcal{D}}(\mu_n, \mu)\leq \varepsilon/2\). Then we have
\begin{eqnarray*}
\| R_{\alpha}\mu \|_{\infty} &\leq & \| R_\alpha|\mu_n-\mu|\|_\infty + \| R_\alpha \mu_n\|_\infty \\
&\leq & 2\| R_1|\mu_n-\mu| \|_\infty  + \| R_\alpha \mu_n\|_\infty.
\end{eqnarray*}
By letting \(\alpha\) tend to infinity, we have 
\[\varlimsup_{\alpha \to \infty} \| R_{\alpha}\mu \|_{\infty} \leq \varepsilon\]
and so \(\mu \in \mathcal{K}.\)
\end{proof}

\begin{corollary}\label{Green-tight_complete}
The Green-tight class \(\mathcal{K}_\infty(R_1)\) is a closed subset of \((\mathcal{D}, d_{\mathcal{D}})\). In particular, if \(R_1f\) has a lower semi-continuous version for any non-negative function \(f\in C_c\), then \((\mathcal{K}_\infty(R_1), d_{\mathcal{D}})\) is a complete metric space.
\end{corollary}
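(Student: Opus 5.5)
We argue exactly as for Theorem \ref{Kato_complete}. Take $\{\mu_n\}_n \subset \mathcal{K}_\infty(R_1)$ and $\mu \in \mathcal{D}$ with $d_{\mathcal{D}}(\mu_n,\mu)\to 0$. We must show two things: that $\mu\in\mathcal{K}$, and that $\mu$ satisfies the Green-tightness condition. Since $\mathcal{K}_\infty(R_1)\subset\mathcal{K}$, Theorem \ref{Kato_complete} already gives $\mu\in\mathcal{K}$. So only tightness remains.

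Fix $\varepsilon>0$. First choose $n$ with $d_{\mathcal{D}}(\mu_n,\mu)\le\varepsilon/2$. Since $\mu_n\in\mathcal{K}_\infty(R_1)$, choose a compact set $K$ with $\|R_1(1_{K^c}\mu_n)\|_\infty\le\varepsilon/2$.

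The key step is the pointwise domination $1_{K^c}\mu \le 1_{K^c}\mu_n + 1_{K^c}|\mu-\mu_n|$, where $|\mu-\mu_n|=(\mu-\mu_n)_++(\mu-\mu_n)_-$. This holds because $\mu \le \mu_n + (\mu-\mu_n)_+$ as measures. It must be read carefully: $\mu$ and $\mu_n$ are only $\sigma$-finite along a nest. So I would verify it on each set $F_k\cap A$, where $\{F_k\}$ is a nest with $\mu(F_k),\mu_n(F_k)<\infty$, and on such sets the Jordan decomposition is a genuine signed measure. Then let $k\to\infty$.

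By linearity and monotonicity of $\nu\mapsto R_1\nu$ on smooth measures (additivity of PCAFs under the Revuz correspondence), and since $1_{K^c}|\mu-\mu_n|\le|\mu-\mu_n|$, this gives
\[
\|R_1(1_{K^c}\mu)\|_\infty \le \|R_1(1_{K^c}\mu_n)\|_\infty + \|R_1|\mu-\mu_n|\|_\infty \le \varepsilon .
\]
Hence $\mu\in\mathcal{K}_\infty(R_1)$, and $\mathcal{K}_\infty(R_1)$ is closed in $(\mathcal{D},d_{\mathcal{D}})$.

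For the second assertion, a closed subset of a complete metric space is complete. Under the lower semi-continuity hypothesis, Theorem \ref{Dynkin_complete} gives completeness of $(\mathcal{D},d_{\mathcal{D}})$, which finishes the proof.

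The only delicate point is justifying the measure inequality and the additivity $R_1(\nu_1+\nu_2)=R_1\nu_1+R_1\nu_2$ for possibly infinite smooth measures. I would handle this through the PCAF identity $A^{\nu_1+\nu_2}=A^{\nu_1}+A^{\nu_2}$.
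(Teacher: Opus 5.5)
Your proof is correct and matches the paper's argument: you get $\mu\in\mathcal{K}$ from Theorem \ref{Kato_complete}, choose $n$ with $d_{\mathcal{D}}(\mu_n,\mu)\le\varepsilon/2$ and then a compact $K$ with $\|R_1(1_{K^c}\mu_n)\|_\infty\le\varepsilon/2$, and bound $\|R_1(1_{K^c}\mu)\|_\infty$ by $\|R_1|\mu_n-\mu|\|_\infty+\|R_1(1_{K^c}\mu_n)\|_\infty\le\varepsilon$. Your nest-wise check of $1_{K^c}\mu\le 1_{K^c}\mu_n+|\mu-\mu_n|$ and of the additivity of $R_1$ makes explicit what the paper uses without comment, and completeness then follows from Theorem \ref{Dynkin_complete}, as you say.
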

\begin{proof}
We take \(\{\mu_n\}_n \subset \mathcal{K}_\infty(R_1)\) and \(\mu\in\mathcal{D}\) such that \(d_\mathcal{D}(\mu_n, \mu)\) converges to \(0\) as \(n \to \infty.\) By Theorem \ref{Kato_complete}, \(\mu\in \mathcal{K}\). For any \(\varepsilon>0\), we fix a large \(n\) satisfying \(\|R_1|\mu_n-\mu|\|_\infty \leq \varepsilon/2\) and we take a compact set \(K\) satisfying \(\|R_1(1_{K^c}\mu_n)\|_\infty \leq \varepsilon/2\). Then we have
\begin{equation*}
\|R_1(1_{K^c}\mu)\|_\infty \leq \|R_1|\mu_n-\mu|\|_\infty + \|R_1(1_{K^c}\mu_n)\|_\infty \leq \varepsilon.
\end{equation*}
\end{proof}

We recall that \(X\) satisfies the resolvent Feller property if \(R_1(C_\infty) \subset C_\infty\), and $X$ satisfies the resolvent strong Feller property if \(R_1(\mathcal{B}_b) \subset C_b\). Here, $\mathcal{B}_b$ denotes the set of all
bounded measurable functions, while $C_b$ and $C_\infty$ are the spaces of bounded continuous functions and continuous functions vanishing at infinity, respectively. The following follows immediately from Theorem \ref{Dynkin_complete}, \ref{Kato_complete} and Corollary \ref{Green-tight_complete}.

\begin{corollary}
If \(X\) enjoys the resolvent Feller property or the resolvent strong Feller property, then it holds that \(\mathcal{K}_\infty(R_1) \subset \mathcal{K} \subset \mathcal{D}\) and each subspace is complete with respect to \(d_{\mathcal{D}}\).
\end{corollary}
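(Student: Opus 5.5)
The plan is to reduce the statement to the lower semi-continuity hypothesis of Theorem \ref{Dynkin_complete}. Under either Feller-type assumption, we will show that \(R_1 f\) has a lower semi-continuous version for every non-negative \(f\in C_c\). Theorem \ref{Dynkin_complete} then gives completeness of \((\mathcal{D}, d_{\mathcal{D}})\). Theorem \ref{Kato_complete} and Corollary \ref{Green-tight_complete} show that \(\mathcal{K}\) and \(\mathcal{K}_\infty(R_1)\) are \(d_{\mathcal{D}}\)-closed subsets of \(\mathcal{D}\), so they are complete as well.

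The inclusions are taken from earlier parts of the paper.
\begin{itemize}
\item \(\mathcal{K}_\infty(R_1)\subset\mathcal{K}\) holds by the very definition of a Green-tight measure, since Green-tight measures are defined as Kato class measures with an extra condition.
\item \(\mathcal{K}\subset\mathcal{D}\) was already remarked after the definition: if \(\lim_{\alpha\to\infty}\|R_\alpha\mu\|_{\infty,q}=0\), then some \(\|R_\alpha\mu\|_{\infty,q}\) is finite. The resolvent equation then bounds \(\|R_1\mu\|_{\infty,q}\), equivalently \(\|U_1\mu\|_{\infty,q}\) in the notation of the definition.
\end{itemize}

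For the regularity hypothesis, fix a non-negative \(f\in C_c\).
\begin{itemize}
\item Under the resolvent Feller property: \(f\in C_c\subset C_\infty\), so \(R_1 f\in C_\infty\). In particular \(R_1f\) is continuous, hence lower semi-continuous.
\item Under the resolvent strong Feller property: \(f\) is bounded and measurable, so \(R_1 f\in C_b\), which is again continuous.
\end{itemize}

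The only point needing care is that the hypothesis of Theorem \ref{Dynkin_complete} concerns a \emph{version} of \(R_1f\). The resolvent here is the pointwise one, \(R_1f(x)=\mathbb{E}_x\int_0^\infty e^{-t}f(X_t)\,dt\), which is an \(m\)-version of the \(L^2\)-resolvent. The Feller assumptions are statements about this pointwise function, so the continuous function obtained is exactly the version required. No step is a genuine obstacle; the argument is bookkeeping once this identification of versions is made explicit.
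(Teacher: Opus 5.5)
Your proposal is correct and follows the paper's route: the paper simply states that the corollary follows immediately from Theorem \ref{Dynkin_complete}, Theorem \ref{Kato_complete} and Corollary \ref{Green-tight_complete}. Your check that either Feller-type property makes \(R_1 f\) continuous, hence lower semi-continuous, for non-negative \(f\in C_c\) is exactly the implicit step that makes those results applicable, and the inclusions \(\mathcal{K}_\infty(R_1)\subset\mathcal{K}\subset\mathcal{D}\) come from the definitions and the resolvent equation as you say.
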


\begin{example}
In Example \ref{counterexofmetric1}, we considered \(1\)-order equilibrium measures \(\mu_n\) of \(B(0;1-\frac{1}{n})^c\) for the reflected Brownian motion on the unit disk \(B(0;1) \subset \mathbb{R}^2.\) We have already seen that \(\{U_1\mu_n\}_n\) is a Cauchy sequence in \(L^\infty\) but its limit is not a \(1\)-potential of any smooth measure. In this example, we see that \(\{\mu_n\}_n\) is not a Cauchy sequence with respect to \(d_\mathcal{D}\). We set the radial average \(\bar{r_1}\) by 
\[\bar{r_1}(r,s):= \frac{1}{2\pi}\int_0^{2\pi} r_1\left((r,0),(s\cos\theta, s\sin\theta)\right)\,d\theta\]
and then, by considering radial derivative, \(\bar{r_1}(r,s)\) is a Green resolvent of \(u(r)=\frac{1}{2}u''(r)+\frac{1}{2r}u'(r)\) with the Neumann condition at \(r=1\) whose homogeneous equation is the modified Bessel differential equation. Hence solutions to  \(u''(r)+\frac{1}{r}u'(r)-2u(r)=0\) are linear combinations of the \(0\)-order modified Bessel functions of the first kind \(I_0(\sqrt{2}r)\) and the second kind \(  K_0(\sqrt{2}r)\). Since \(u''(r)+\frac{1}{r}u'(r)-2u(r)=0\) is also a Sturm–Liouville equation, its Green function can be expressed by two solutions \cite[(5.65)]{T12} and so it holds that \(\bar{r}_1(r,s) = C (I_0(\sqrt{2}(r\wedge s)) + c_1 K_0(\sqrt{2}(r\wedge s)))(I_0(\sqrt{2}(r\vee s)) + c_2 K_0(\sqrt{2}(r\vee s))) \). Since \(K_0(0)=\infty,\) we have \(c_1=0\). By the Neumann condition, we have \(c_2=I_1(\sqrt{2})/K_1(\sqrt{2})\)  where \(I_1\) and \(K_1\) are the \(1\)-order modified Bessel functions of the first and second kind, respectively. 
We set \(a_n:=-I_0'(\sqrt{2}(1-\frac{1}{n}))/(\sqrt{2}\pi I_0(\sqrt{2}(1-\frac{1}{n})))\) and prove that \(\nu_n:= \pi^{-1}1_{B(0;1-1/n)^c}dx + a_n \sigma_{\partial B(0;1-\frac{1}{n})}\) is a \(1\)-equilibrium measure on \(B(0;1-\frac{1}{n})^c\), where \(\sigma_{\partial B(0;1-\frac{1}{n})}\) is a Lebesgue measure on \(\partial B(0;1-\frac{1}{n})\). Let \(u(r):=I_0(\sqrt{2}r) + c_2 K_0(\sqrt{2}r)\) and by using the rotational symmetry, when \(|x|=r\leq 1-\frac{1}{n}\), we have
\begin{eqnarray*}
\int r_1(x,y)\,d\nu_n(y) &=& 2 \int_{1-\frac{1}{n}}^1 \bar{r_1}(r,s)s\,ds + 2\pi(1-\frac{1}{n}) a_n \bar{r_1}(r,1-\frac{1}{n})\\
&=& 2 C I_0(\sqrt{2}r) \left( \int_{1-\frac{1}{n}}^1 u(s)s\,ds+\pi a_n(1-\frac{1}{n}) u(1-\frac{1}{n}) \right)\\
&=& 2 C I_0(\sqrt{2}r) (1-\frac{1}{n}) \left(-\frac{1}{2}u'(1-\frac{1}{n}) + \pi a_n u(1-\frac{1}{n}) \right)\\
&=& \frac{I_0(\sqrt{2}r)}{I_0(\sqrt{2}(1-\frac{1}{n}))}.
\end{eqnarray*}
In the second inequality above, we used the relations \((su')'=2su\) and \(u'(1)=0\). For the third
inequality, we applied the fact that the Wronskian of $u(r)$ and $I_0(\sqrt{2} r)$ taken with respect to $r$ is \((C \sqrt{2} r)^{-1}\) ( \cite[(5.65)]{T12} ). Similarly, we obtain \(\int r_1(x,y)\,d\nu_n(y) =1\) when \(1-\frac{1}{n}<|x|=r<1\), hence, the 1-resolvent of \(\nu_n\) coincides with the 1-order equilibrium potential \(u_n\) in Example \ref{counterexofmetric1}. Then \(\nu_n\) is a \(1\)-equilibrium measure \(\mu_n\) on \(B(0;1-\frac{1}{n})^c\). 

For \(n\geq m\), we have 
\begin{eqnarray*}
|\mu_n-\mu_m|&=&\left|  \pi^{-1}1_{\{1-\frac{1}{m} \leq |x| < 1-\frac{1}{n}\}}\,dx + a_m\sigma_{\partial B(0;1-\frac{1}{m})} -a_n\sigma_{\partial B(0;1-\frac{1}{n})} \right|\\
&=&  \pi^{-1}1_{\{1-\frac{1}{m} \leq |x| < 1-\frac{1}{n}\}}\,dx + a_m\sigma_{\partial B(0;1-\frac{1}{m})} +a_n\sigma_{\partial B(0;1-\frac{1}{n})}\\
&\geq & a_n\sigma_{\partial B(0;1-\frac{1}{n})}.
\end{eqnarray*}
Consequently, 
\begin{eqnarray*}
\|U_1|\mu_n-\mu_m|\|_\infty &\geq & a_n \| U_1\sigma_{\partial B(0;1-\frac{1}{n})}\|_\infty\\
&=& \sup_{0\leq r<1}  \sqrt{2} C (1-\frac{1}{n}) \frac{I_1(\sqrt{2}(1-\frac{1}{n}))}{I_0(\sqrt{2}(1-\frac{1}{n}))} I_0(\sqrt{2}r) u(1-\frac{1}{n})\\
&\geq & \sqrt{2} C (1-\frac{1}{n}) I_1(\sqrt{2}(1-\frac{1}{n})) u(1-\frac{1}{n})\\
&\to &  \sqrt{2} C I_1(\sqrt{2}) u(1)
\end{eqnarray*}
as $n\to \infty$. Since the last term is positive, it follows that $\{\mu_n\}_n$ is not a Cauchy
sequence with respect to $d_{\cal D}$.
\end{example}

Next we consider the convergence of PCAFs and Feynman-Kac type semigroups. We give the following decomposition of potentials. 
\begin{proposition}\label{L1Fukushima_decomp}
Fix \(\alpha >0\). For \(\mu \in \mathcal{S}\) with \(R_\alpha \mu (x) < \infty\) for q.e. \(x\in E\), there exists an additive functional \(M\) such that \(M\) is an \(L^1(\mathbb{P}_x)\)-martingale whose expectation is \(0\) q.e. \(x\in E\), and
\[R_{\alpha}\mu(X_t)-R_{\alpha}\mu(X_0)=M_t +\alpha \int_0^t R_{\alpha}\mu(X_s) ds -A_t^\mu \ \ \mathbb{P}_x{\text-a.s. \ for\ q.e.} x\in E {\text \ and\ } t\geq 0.\]
\end{proposition}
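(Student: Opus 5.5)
The natural route is to define $M$ explicitly and verify the martingale property directly from the Markov property, rather than going through the Fukushima decomposition. The latter requires $R_\alpha\mu\in\mathcal{F}$, which we do not have here. Set $u:=R_\alpha\mu$, which is finite q.e. by assumption, and define
\[
M_t := u(X_t)-u(X_0)-\alpha\int_0^t u(X_s)\,ds + A_t^\mu .
\]
We must check three things: $M$ is an additive functional, $M_t\in L^1(\mathbb{P}_x)$ with $\mathbb{E}_x[M_t]=0$, and $M$ has the martingale property.

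First, $u$ is finely continuous q.e. (it is an excessive function of the $\alpha$-subprocess), so $u(X_t)$ is a.s. right continuous. Additivity of $M$ then follows from additivity of $A^\mu$ and of the time integral. The exceptional set is properly exceptional: by the Markov property, $\{u=\infty\}$ is $\mathbb{P}_x$-a.s. not hit for q.e. $x$, using that $e^{-\alpha t}P_t u\le u$.

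For integrability, the Markov property (exactly as in the proof of Lemma \ref{lemdefKato}) gives
\[
u(x)=\mathbb{E}_x\Big[\int_0^t e^{-\alpha s}\,dA_s^\mu\Big]+e^{-\alpha t}P_tu(x).
\]
Hence $P_tu(x)\le e^{\alpha t}u(x)<\infty$ and $\mathbb{E}_x[A_t^\mu]\le e^{\alpha t}u(x)$. Also $\int_0^t P_su(x)\,ds\le \frac{e^{\alpha t}-1}{\alpha}\,u(x)$. So each term of $M_t$ lies in $L^1(\mathbb{P}_x)$ for q.e. $x$.

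The key identity is
\[
P_tu(x)-u(x)=\alpha\int_0^t P_su(x)\,ds-\mathbb{E}_x[A_t^\mu],
\]
which shows $\mathbb{E}_x[M_t]=0$. To prove it, I will set $\phi(t):=\mathbb{E}_x[A_t^\mu]$ and use the Markov property in the form $\mathbb{E}_x\big[\int_s^\infty e^{-\alpha r}\,dA^\mu_r\big]=e^{-\alpha s}P_su(x)$. Then
\[
\mathbb{E}_x\Big[\int_0^t e^{-\alpha s}\,dA_s^\mu\Big]=u(x)-e^{-\alpha t}P_tu(x).
\]
Integrating by parts, $\int_0^t e^{-\alpha s}\,dA_s=e^{-\alpha t}A_t+\alpha\int_0^t e^{-\alpha s}A_s\,ds$. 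This yields an identity relating $\phi$ to $P_\cdot u$, and I expect the cleanest way to extract the key identity is to show that $\psi(t):=P_tu(x)-u(x)-\alpha\int_0^tP_su\,ds+\phi(t)$ vanishes. Concretely, I plan to differentiate in the distributional sense, or equivalently take Laplace transforms in $t$: both sides of the candidate identity have Laplace transform determined by the resolvent equation $R_\beta u=\ldots$ together with the formula $\mathbb{E}_x\int_0^\infty e^{-\beta t}dA_t = R_\beta\mu$ and the resolvent identity $R_\alpha\mu-R_\beta\mu=(\beta-\alpha)R_\beta R_\alpha\mu$. Uniqueness of the Laplace transform then gives the identity for all $t$, by right continuity. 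I expect this bookkeeping step, and ensuring all quantities are finite so that subtraction is legitimate, to be the main obstacle. The resolvent identity for a possibly infinite-energy $\mu$ follows from Fubini and the Markov property on the PCAF, with finiteness coming from $R_\beta\mu\le R_\alpha\mu$ for $\beta\ge\alpha$ (and $\beta>\alpha$ suffices for Laplace uniqueness).

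Finally, the martingale property comes from additivity and the Markov property:
\[
\mathbb{E}_x[M_{t+s}-M_s\mid\mathcal{F}_s]=\mathbb{E}_{X_s}[M_t]=0
\]
$\mathbb{P}_x$-a.s. This uses that $X_s$ avoids the exceptional set a.s., which holds because the set is properly exceptional (or after restricting to the complement of a properly exceptional set, as in \cite{FOT11}). Rearranging the definition of $M$ gives the displayed decomposition.
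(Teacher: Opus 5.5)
Your proposal is correct, but it gets the martingale property by a different mechanism than the paper.

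\textbf{The paper's route.} It proves no separate analytic identity. It sets $\widetilde A_t:=\int_0^t e^{-\alpha s}\,dA^\mu_s$ and $N_t:=\mathbb{E}_x[\widetilde A_\infty\mid\mathcal{F}_t]$. Using $\widetilde A_{t+s}=\widetilde A_t+e^{-\alpha t}\,\widetilde A_s\circ\theta_t$ and the Markov property, it writes $R_\alpha\mu(X_t)=e^{\alpha t}(N_t-\widetilde A_t)$. Substituting this into the same $M_t$ you define and applying Fubini gives, under $\mathbb{P}_x$, the pathwise representation $M_t=e^{\alpha t}N_t-N_0-\alpha\int_0^t e^{\alpha s}N_s\,ds$. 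Both $\mathbb{E}_x[M_t]=0$ and $\mathbb{E}_x[M_{t+s}\mid\mathcal{F}_t]=M_t$ then follow directly from the martingale property of the closed martingale $N$ under the single measure $\mathbb{P}_x$.

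\textbf{Your route.} You prove the Dynkin-type identity $P_tu-u=\alpha\int_0^t P_su\,ds-\mathbb{E}_\cdot[A^\mu_t]$ q.e. You then upgrade ``mean zero for q.e.\ starting point'' to ``martingale'' through additivity and the Markov property. That is why you need $X_s$ to stay off a properly exceptional set. You handle this correctly, though it is cleaner to invoke that every exceptional set lies in a properly exceptional one. The fixed-time bound $e^{-\alpha t}P_tu\le u$ alone does not show $\{u=\infty\}$ is never hit.

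\textbf{What each buys.} The paper's route gives an explicit representation of $M$, which is what yields the subsequent remark $M_t=\int_0^t e^{\alpha s}\,dN_s$. Your route isolates exactly the expectation identity the paper later uses in its convergence theorem for PCAFs. It is also the more standard and more portable argument.

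\textbf{The step you flagged as the main obstacle.} It is easier than you expect. Integrate by parts in the other direction, $A^\mu_t=e^{\alpha t}\widetilde A_t-\alpha\int_0^t e^{\alpha s}\widetilde A_s\,ds$, take $\mathbb{E}_x$, and insert $\mathbb{E}_x[\widetilde A_s]=u(x)-e^{-\alpha s}P_su(x)$. The key identity drops out in one line; this is precisely the paper's Fubini computation. Your Laplace-transform argument is also valid: the resolvent identity $R_\alpha\mu-R_\beta\mu=(\beta-\alpha)R_\beta R_\alpha\mu$ holds pointwise, and $t\mapsto P_tu(x)$ and $t\mapsto\mathbb{E}_x[A^\mu_t]$ are continuous because $e^{-\alpha t}P_tu(x)=u(x)-\mathbb{E}_x[\widetilde A_t]$. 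It is simply a detour.
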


\begin{proof}
For convenience, we define \(\widetilde{A^\mu_t}:= \int_0^t e^{-\alpha s}dA_s^\mu\). Since \(\mathbb{E}_x[\widetilde{A^\mu_\infty}]=R_\alpha \mu (x) <\infty \), \(\widetilde{A^\mu_t}\) is a \([0,\infty)\)-valued continuous process, and it satisfies that
\[\widetilde{A_{t+s}^\mu}=\widetilde{A_{t}^\mu}+ e^{-\alpha t} \widetilde{A^\mu_{s}}\circ \theta_t.\]
Hence we have
\[R_\alpha \mu(X_t)= \mathbb{E}_{X_t}[\widetilde{A_{\infty}^\mu}]=\mathbb{E}_{x}[\widetilde{A_{\infty}^\mu}\circ \theta_t |\mathcal{F}_t]=e^{\alpha t}\mathbb{E}_{x}[\widetilde{A_{\infty}^\mu} |\mathcal{F}_t] -e^{\alpha t}\widetilde{A_t^\mu}.\]

We set
\[M_t:= R_{\alpha}\mu(X_t)-R_{\alpha}\mu(X_0)-\alpha \int_0^t R_{\alpha}\mu(X_s) ds + A_t^\mu.\]
For q.e. \(x\in E\), we have
\begin{eqnarray*}
M_t &=& e^{\alpha t}\mathbb{E}_{x}[\widetilde{A_{\infty}^\mu} |\mathcal{F}_t] -e^{\alpha t}\widetilde{A_t^\mu}-\mathbb{E}_{x}[\widetilde{A_{\infty}^\mu}]-\int_0^t \alpha e^{\alpha s}\left(\mathbb{E}_{x}[\widetilde{A^\mu_{\infty}} |\mathcal{F}_s] -\widetilde{A_s^\mu} \right) ds + A_t^\mu \\
&=&  e^{\alpha t}\mathbb{E}_{x}[\widetilde{A_{\infty}^\mu} |\mathcal{F}_t] -e^{\alpha t}\widetilde{A_t^\mu}-\mathbb{E}_{x}[\widetilde{A_{\infty}^\mu}]-\int_0^t \alpha e^{\alpha s}\mathbb{E}_{x}[\widetilde{A_{\infty}^\mu} |\mathcal{F}_s]ds+
\int_0^t \int_0^s \alpha e^{\alpha s}e^{-\alpha u}dA_u^\mu ds + A_t^\mu \\
&=&  e^{\alpha t}\mathbb{E}_{x}[\widetilde{A_{\infty}^\mu} |\mathcal{F}_t] -e^{\alpha t}\widetilde{A_t^\mu}-\mathbb{E}_{x}[\widetilde{A_{\infty}^\mu}]-\int_0^t \alpha e^{\alpha s}\mathbb{E}_{x}[\widetilde{A_{\infty}^\mu} |\mathcal{F}_s]ds+
\int_0^t \int_u^t \alpha e^{\alpha s}e^{-\alpha u}dsdA_u^\mu  + A_t^\mu \\
&=&  e^{\alpha t}\mathbb{E}_{x}[\widetilde{A_{\infty}^\mu} |\mathcal{F}_t] -e^{\alpha t}\widetilde{A_t^\mu}-\mathbb{E}_{x}[\widetilde{A_{\infty}^\mu}]-\int_0^t \alpha e^{\alpha s}\mathbb{E}_{x}[\widetilde{A_{\infty}^\mu} |\mathcal{F}_s]ds+
\int_0^t (e^{\alpha(t-u)}-1)dA_u^\mu  + A_t^\mu \\
&=&  e^{\alpha t}\mathbb{E}_{x}[\widetilde{A_{\infty}^\mu} |\mathcal{F}_t] -\mathbb{E}_{x}[\widetilde{A_{\infty}^\mu}]-\int_0^t \alpha e^{\alpha s}\mathbb{E}_{x}[\widetilde{A_{\infty}^\mu} |\mathcal{F}_s]ds
\end{eqnarray*}
and thus we get
\[\mathbb{E}_x[M_t]= e^{\alpha t}\mathbb{E}_{x}[\widetilde{A_{\infty}^\mu}] -\mathbb{E}_{x}[\widetilde{A_{\infty}^\mu}]-\int_0^t \alpha e^{\alpha s}\mathbb{E}_{x}[\widetilde{A_{\infty}^\mu}]ds =0.\]
Moreover we have
\begin{eqnarray*}
\mathbb{E}_x[M_{t+s}|\mathcal{F}_t] &=& e^{\alpha (t+s)} \mathbb{E}_x[\mathbb{E}_{x}[\widetilde{A_{\infty}^\mu} |\mathcal{F}_{t+s}]|\mathcal{F}_t]  -\mathbb{E}_{x}[\mathbb{E}_{x}[\widetilde{A_{\infty}^\mu}]|\mathcal{F}_t] -\int_0^{t+s} \alpha e^{\alpha u}\mathbb{E}_{x}[\mathbb{E}_{x}[\widetilde{A_{\infty}^\mu} |\mathcal{F}_u]|\mathcal{F}_t] du\\
&=& e^{\alpha (t+s)} \mathbb{E}_x[\widetilde{A_{\infty}^\mu} |\mathcal{F}_t]  -\mathbb{E}_{x}[\widetilde{A_{\infty}^\mu}] -\int_0^{t} \alpha e^{\alpha u}\mathbb{E}_{x}[\widetilde{A_{\infty}^\mu} |\mathcal{F}_u] du-\int_t^{t+s} \alpha e^{\alpha u}\mathbb{E}_{x}[\widetilde{A_{\infty}^\mu}|\mathcal{F}_t] du\\
&=& e^{\alpha (t+s)} \mathbb{E}_x[\widetilde{A_{\infty}^\mu} |\mathcal{F}_t]  -\mathbb{E}_{x}[\widetilde{A_{\infty}^\mu}] -\int_0^{t} \alpha e^{\alpha u}\mathbb{E}_{x}[\widetilde{A_{\infty}^\mu} |\mathcal{F}_u] du- e^{\alpha t}(e^{\alpha s}-1) \mathbb{E}_{x}[\widetilde{A_{\infty}^\mu}|\mathcal{F}_t] \\
&=&  M_t.
\end{eqnarray*}
Hence \(M\) is an \(L^1(\mathbb{P}_x)\)-martingale.
\end{proof}

\begin{remark}
If \(\mathbb{E}_x[(\widetilde{A_{\infty}^\mu})^2] <\infty\), the above \(M_t\) can be represented by the stochastic integral
\[M_t = \int_0^t e^{\alpha s} d\,\mathbb{E}_{X_0}[\widetilde{A_{\infty}^\mu}|\mathcal{F}_s].\]
\end{remark}

\begin{theorem}
For \(\mu_n, \mu \in \mathcal{D}\) and their corresponding PCAFs \(A^{\mu_n}, A^{\mu}\), if \(\mu_n\) converges to \(\mu\) in \(d_\mathcal{D}\), then \(A^{\mu_n}\) converges to \(A^{\mu}\) in \(L^1(\mathbb{P}_x)\) uniformly for \(x\) with the local uniform topology for \(t\), that is, for any \(T>0\),
\[\lim_{n\to \infty} \left\|\mathbb{E}_\cdot[\sup_{0\leq t\leq T} |A^{\mu_n}_t-A^{\mu}_t|]\right\|_{\infty,q} = 0.\]
\end{theorem}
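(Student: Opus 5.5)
The idea is to dominate the sup by a single PCAF and then control its expectation by the $1$-potential. Set $\lambda_n := |\mu_n-\mu| = (\mu_n-\mu)_+ + (\mu_n-\mu)_-$. As checked after the definition of $d_\mathcal{D}$, this is a smooth measure, and $\|R_1\lambda_n\|_\infty = d_\mathcal{D}(\mu_n,\mu)\to 0$.

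\textbf{Step 1: pathwise domination.} By the Revuz correspondence, additivity of PCAFs in the measure gives
\[
A^{\mu_n}=A^{\mu_n\wedge\mu}+A^{(\mu_n-\mu)_+},\qquad A^{\mu}=A^{\mu_n\wedge\mu}+A^{(\mu_n-\mu)_-}.
\]
The common part $\mu_n\wedge\mu$ is smooth and dominated by $\mu$. Hence, for q.e.\ $x$ and $\mathbb{P}_x$-a.s.,
\[
|A^{\mu_n}_t-A^{\mu}_t| = |A^{(\mu_n-\mu)_+}_t - A^{(\mu_n-\mu)_-}_t| \le A^{\lambda_n}_t \quad\text{for all } t.
\]
Since $A^{\lambda_n}$ is nondecreasing,
\[
\sup_{0\le t\le T}|A^{\mu_n}_t-A^{\mu}_t| \le A^{\lambda_n}_T .
\]

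\textbf{Step 2: from $A_T$ to the $1$-potential.} Use the elementary bound from the proof of Lemma \ref{lemdefKato}:
\[
\mathbb{E}_x[A^{\lambda_n}_T]\le e^{T}\,\mathbb{E}_x\Big[\int_0^T e^{-s}\,dA^{\lambda_n}_s\Big]\le e^{T} R_1\lambda_n(x).
\]
This holds for q.e.\ $x$, so taking quasi-essential suprema gives
\[
\big\|\mathbb{E}_\cdot[\textstyle\sup_{t\le T}|A^{\mu_n}_t-A^\mu_t|]\big\|_{\infty,q} \le e^{T}\|R_1\lambda_n\|_{\infty,q}.
\]

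\textbf{Step 3: conclude.} By Lemma \ref{potCapalemma}, $\|R_1\lambda_n\|_{\infty,q}=\|R_1\lambda_n\|_\infty=d_\mathcal{D}(\mu_n,\mu)$. Therefore the left side above is at most $e^{T}d_\mathcal{D}(\mu_n,\mu)\to0$, which is the claim.

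\textbf{Main obstacle.} The delicate point is Step 1, namely justifying the decomposition at the level of PCAFs. For a signed difference, $A^{\mu_n}-A^{\mu}$ should equal $A^{(\mu_n-\mu)_+}-A^{(\mu_n-\mu)_-}$ off a common exceptional set. I would argue this via uniqueness in the Revuz correspondence (e.g.\ \cite[Theorem 4.1.1]{CF12}). The measures $\mu_n$ and $(\mu_n-\mu)_-+\mu_n\wedge\mu$ coincide as measures, and $\mu_n\wedge\mu=\mu_n-(\mu_n-\mu)_+$ is a positive smooth measure because it is dominated by $\mu_n$. Uniqueness then yields equality of the corresponding PCAFs up to equivalence. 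The exceptional sets for the countably many $n$ can be merged into one set of zero capacity.
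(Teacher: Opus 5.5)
Your proof is correct. It has the same skeleton as the paper's proof: dominate $\sup_{0\le t\le T}|A^{\mu_n}_t-A^{\mu}_t|$ pathwise by $A^{|\mu_n-\mu|}_T$ via uniqueness in the Revuz correspondence, then control $\mathbb{E}_x[A^{|\mu_n-\mu|}_T]$ by $\|R_1|\mu_n-\mu|\|_\infty=d_{\mathcal{D}}(\mu_n,\mu)$.

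The only genuine difference is in this last estimate. The paper takes expectations in the $L^1$-type Fukushima decomposition of Proposition \ref{L1Fukushima_decomp} with $\alpha=1$. This gives $\mathbb{E}_x[A^{\lambda_n}_T]=R_1\lambda_n(x)-P_TR_1\lambda_n(x)+\int_0^T P_sR_1\lambda_n(x)\,ds$, hence a bound of the form $(1+T)\,d_{\mathcal{D}}(\mu_n,\mu)$. You instead use the one-line inequality $\mathbb{E}_x[A^{\lambda_n}_T]\le e^{T}R_1\lambda_n(x)$, already used in the proof of Lemma \ref{lemdefKato}. Your route is shorter and bypasses Proposition \ref{L1Fukushima_decomp} and its martingale argument entirely. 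The price is a constant $e^{T}$ rather than one linear in $T$, which is irrelevant for the convergence claim. The paper's identity carries more structure (an exact decomposition with a mean-zero martingale), but that extra structure is not needed here.

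One slip to fix in your final paragraph: the identity you need is $\mu_n=\mu_n\wedge\mu+(\mu_n-\mu)_+$, together with $\mu=\mu_n\wedge\mu+(\mu_n-\mu)_-$. It is not $\mu_n=(\mu_n-\mu)_-+\mu_n\wedge\mu$; your Step 1 already states it correctly.
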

\begin{proof}
Let \(A^{|\mu_n-\mu|}\) be a PCAF corresponding to \(|\mu_n-\mu| \in \mathcal{D}\). By Proposition \ref{L1Fukushima_decomp}, we have, for any \(t\),
\[P_tR_1|\mu_n-\mu| - R_1|\mu_n-\mu| = \int_0^t P_tR_1|\mu_n-\mu|\,dt - \mathbb{E}_x[A^{|\mu_n-\mu|}_t]\ {\text \ q.e.\ }\]
Hence \(\| \mathbb{E}_\cdot[A^{|\mu_n-\mu|}_t]\|_{\infty, q}\) converges to \(0\) as \(n\) tends to infinity. By the uniqueness of a PCAF, we can take an appropriate exceptional set \(N_n\) such that \(A^{|\mu_n-\mu|}_t = A^{(\mu_n-\mu)_+}_t + A^{(\mu_n-\mu)_-}_t\) and \(A^{\mu_n}_t-A^{\mu}_t = A^{(\mu_n-\mu)_+}_t - A^{(\mu_n-\mu)_-}_t\) \(\mathbb{P}_x\)-almost surely for \(x \in N^c_n\). Therefore we have
\[\sup_{0\leq t\leq T} |A^{\mu_n}_t-A^{\mu}_t| \leq  \sup_{0\leq t\leq T} A^{|\mu_n-\mu|}_t =  A^{|\mu_n-\mu|}_T\]
 \(\mathbb{P}_x\)-almost surely for \(x \in N^c_n\), and so \(A^{\mu_n}\) converges to \(A^{\mu}\) in \(L^1(\mathbb{P}_x)\) uniformly for \(x\) with the local uniform topology for \(t\).
\end{proof}

\begin{remark}
In general, a PCAF may blow up within finite time.  However, for a smooth measure $\mu \in{\cal D}$, the PCAF $A^\mu$ satisfies \(\mathbb{P}_x(A_T^\mu <\infty)=0\), so we can consider the local uniform topology.
\end{remark}

As for the locally uniform convergence  of additive functionals in terms of the Miyadera norm, similar arguments  developed in \cite{NTTU25} tell us the following proposition:

\begin{proposition} Let $\{\mu_n, \mu\} \subset {\cal D}$ and let $\{A^n, A\}$ be the corresponding PCAFs.
Assume that $d_{\cal D}(\mu_n, \mu) \to 0$ as $n \to \infty$. Then the following assertions hold:
\begin{itemize}
\item[\rm (1)] For any compact set $K \subset E$, there exists a subsequence $\{n_k\}$ such that the PCAFs $A^{n_k}$ converge to $A$ locally uniformly in $t$ on $[0,\infty)$ restricted to $K$, ${\mathbb P}_x$-a.s. for quasi every $x \in E$. That is,
$$
{\mathbb P}_x \left( \lim_{k \to \infty} \sup_{0 \le s \le t} \left| \int_0^s 1_K(X_u) \, dA^{n_k}_u - \int_0^s 1_K(X_u) \, dA_u \right| = 0 \text{ for all } t > 0 \right) = 1, \quad \text{q.e. } x \in E.
$$

\item[\rm (2)] Suppose there exists a compact nest $\{F_\ell\}$ satisfying the following tail estimate:
\begin{equation} \label{eqn:tail}
\lim_{\ell \to \infty} \sup_{\nu \in {\cal S}_{c,00}^1} \left\{ \left( \sup_n \int_{F_\ell^c} U_1 \nu \, d\mu_n \right) \vee \int_{F_\ell^c} U_1 \nu \, d\mu \right\} = 0.
\end{equation}
Then, there exists a further subsequence $\{n'_k\}$ such that $A^{n'_k}$ converges to $A$ locally uniformly in $t$ on $[0,\infty)$, ${\mathbb P}_x$-a.s. for quasi every $x \in E$:
$$
{\mathbb P}_x \left( \lim_{k \to \infty} \sup_{0 \le s \le t} |A^{n'_k}_s - A_s| = 0 \text{ for all } t > 0 \right) = 1, \quad \text{q.e. } x \in E,
$$
where ${\cal S}_{c,00}^1 := \{ \nu \in {\cal S}_0 : \mathrm{supp}[\nu] \text{ is compact}, \nu(E) \le 1 \text{ and } \|U_1\nu\|_\infty \le 1 \}$.
\end{itemize}
\end{proposition}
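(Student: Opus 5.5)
Both parts rest on the preceding theorem, which gives $\|\mathbb{E}_\cdot[A^{|\mu_n-\mu|}_T]\|_{\infty,q}\to 0$ for each $T>0$. Since $|A^{n}_s-A_s|\le A^{|\mu_n-\mu|}_s$ a.s. and $A^{|\mu_n-\mu|}$ is nondecreasing, we also have $\sup_{s\le t}|A^n_s-A_s|\le A^{|\mu_n-\mu|}_t$. The plan is to turn this $L^1$ control, which is uniform in the starting point, into almost sure control along a subsequence by Borel--Cantelli. The usual way to pass from a restricted (localized) statement to a global one, as in \cite{NTTU25}, is a tail estimate.

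\textbf{Part (1).} Fix a compact $K$ and write $B^n_t:=\int_0^t 1_K(X_u)\,dA^{|\mu_n-\mu|}_u$. This is the PCAF of $1_K|\mu_n-\mu|$, and it satisfies $B^n_t\le A^{|\mu_n-\mu|}_t$. Hence, for every integer $T$,
\[
\Big\|\mathbb{E}_\cdot\Big[\sup_{s\le T}\Big|\int_0^s 1_K\,dA^n-\int_0^s 1_K\,dA\Big|\Big]\Big\|_{\infty,q}\le \|\mathbb{E}_\cdot[A^{|\mu_n-\mu|}_T]\|_{\infty,q}\le e^T d_{\mathcal D}(\mu_n,\mu).
\]
Choose $n_k$ with $d_{\mathcal D}(\mu_{n_k},\mu)\le 4^{-k}$. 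Chebyshev's inequality gives
\[
\mathbb{P}_x\big(\sup_{s\le T}|\cdot|>2^{-k}e^{T}\big)\le 2^{-k}
\]
off an exceptional set $N_k$ of zero capacity. Borel--Cantelli, applied q.e. off $\bigcup_k N_k$, then yields a.s. convergence on $[0,T]$ for every integer $T$, which is local uniform convergence. Notice that this argument never uses $K$, so a single subsequence already works globally in this form. I suspect the intended proof instead goes through $\int_K U_1\nu\,d|\mu_n-\mu|$ and vague convergence, as in \cite{NTTU25}. If the direct argument above turns out to be valid, I will simply present it.

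\textbf{Part (2).} Given the remark above, (2) follows directly from the same Borel--Cantelli argument, and the tail hypothesis \eqref{eqn:tail} is not needed. For a robust NTTU25-style proof, I would instead split
\[
A^n-A=\int 1_{F_\ell}\,d(A^n-A)+\int 1_{F_\ell^c}\,dA^n-\int 1_{F_\ell^c}\,dA,
\]
apply (1) to the compact set $F_\ell$, and diagonalize over $\ell$. The tail terms would be controlled by $\mathbb{E}_{\nu}[\int_0^t 1_{F_\ell^c}\,dA^n]\le e^t\int_{F_\ell^c}U_1\nu\,d\mu_n$, which is Revuz duality for $\nu\in\mathcal S^1_{c,00}$. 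A Doob-type maximal bound would then show that the tails vanish in probability under $\mathbb{P}_\nu$, uniformly in $n$. The main obstacle in this route is upgrading $\mathbb{P}_\nu$-statements to q.e. $x$. For that step I would use that a set which is $\mathbb{P}_\nu$-null for all such $\nu$ is polar, by \cite[Theorem 2.2.3]{FOT11}-type arguments.
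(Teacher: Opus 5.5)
Your direct argument is correct, and it takes a different and simpler route than the paper's. Its crux is the q.e. bound
\[
\mathbb{E}_x\Big[\sup_{0\le s\le T}|A^n_s-A_s|\Big]\le \mathbb{E}_x\big[A^{|\mu_n-\mu|}_T\big]\le e^{T}R_1|\mu_n-\mu|(x)\le e^{T}d_{\mathcal D}(\mu_n,\mu).
\]
The last step uses Lemma~\ref{potCapalemma} to pass from $\|\cdot\|_\infty$ to $\|\cdot\|_{\infty,q}$. You should state this explicitly, because it is what lets Chebyshev and Borel--Cantelli run pointwise in $x$, off a countable union of exceptional sets.

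The paper instead follows the template of \cite{NTTU25}:
\begin{enumerate}
\item It works under $\mathbb{P}_\nu$ for $\nu\in\mathcal{S}_{c,00}^1$.
\item It splits $A^n-A$ into the piece on $F_\ell^c$, handled by the tail hypothesis \eqref{eqn:tail}, and the piece on $F_\ell$, handled by $|\mu_n-\mu|(F_\ell)\le\Capa(F_\ell)\,d_{\mathcal D}(\mu_n,\mu)$ from Remark~\ref{rem:3.4}.
\item It obtains convergence in probability uniformly in $\nu$ and extracts a subsequence.
\item Only at the end does it upgrade $\mathbb{P}_\nu$-null events to q.e. $x$, via \cite[Theorem 2.2.3]{FOT11} and \cite[Theorem 2.1]{NTTU25}.
\end{enumerate}
Assertion (1) is then read off from (2) applied to $1_K\mu_n$ and $1_K\mu$.

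That scheme is designed for modes of convergence that give only local control of $|\mu_n-\mu|$. Under $d_{\mathcal D}$ it discards the global control you exploit. In fact, even inside the paper's framework, $\int_E U_1\nu\,d|\mu_n-\mu|=\int_E R_1|\mu_n-\mu|\,d\nu\le d_{\mathcal D}(\mu_n,\mu)$ for $\nu\in\mathcal{S}_{c,00}^1$, so no tail term is needed.

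Your route therefore yields more:
\begin{enumerate}
\item a single subsequence serves (1) for every $K$ and also (2);
\item the hypothesis \eqref{eqn:tail} is superfluous;
\item the delicate passage from $\mathbb{P}_\nu$ to q.e. $x$ is avoided.
\end{enumerate}
Present the direct argument. The fallback you sketch is essentially the paper's proof. In that fallback you need neither a diagonalization over $\ell$ nor a Doob inequality, since $s\mapsto\int_0^s 1_{F_\ell^c}\,dA^n$ is nondecreasing and Markov's inequality at time $t$ suffices.
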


\noindent
{\it Proof}:   We give the proof of (2) only. Assertion (1) follows directly from (2) by replacing the measures $\mu_n$ and $\mu$ with the restricted measures $1_K \mu_n$ and $1_K \mu$, respectively, and choosing a compact nest $\{F_\ell\}$ such that $F_\ell \uparrow E$, and for each compact set $K$ there exists an $\ell$ satisfying $K\subset F_\ell$. Indeed, for such an exhausting nest, we have $K \subset F_\ell$ for all sufficiently large $\ell$. This implies that $F_\ell^c \cap K = \varnothing$, which makes the tail estimate \eqref{eqn:tail} trivially zero for such $\ell$.

First, note that $\{1_{F_\ell} \mu_n, 1_{F_\ell}\mu\}$ belong to ${\cal S}_0$ for each $\ell$. 
We shall estimate the following probability by decomposing it into three parts, following the argument in the proof of Theorem 5.1 in \cite{NTTU25}:
\begin{align*}
{\mathbb P}_\nu\Big( \sup_{0\le s \le t} |A^n_s-A_s| > \vareps \Big) 
& \le  {\mathbb P}_\nu\Big( \sup_{0\le s \le t} |A^n_s-(1_{F_\ell}A^n)_s| > \frac{\vareps}3 \Big) 
  +  {\mathbb P}_\nu\Big( \sup_{0\le s \le t} |(1_{F_\ell}A^n)_s-(1_{F_\ell}A)_s| > \frac{\vareps}3 \Big)\\
  & \qquad  +  {\mathbb P}_\nu\Big( \sup_{0\le s \le t} |(1_{F_\ell}A)_s- A_s| > \frac{\vareps}3 \Big) \\
  & =: {\sf (I)} + {\sf (II)} + {\sf (III)}
 \end{align*}
for any $\vareps>0$, $t>0$, $\ell \ge 1$  and $\nu \in {\cal S}_{c,00}^1$.

{\sf step 1.} For the terms {\sf (I)} and {\sf (III)},  using the Revuz  correspondence, the following estimates hold:
$$
{\sf (I)} \le \frac 3{\vareps} {\mathbb E}_\nu\Big[ \int_0^t 1_{F_\ell^c} dA_s^n\Big] \le 
\frac{3e^t}{\vareps} \int_{F_\ell^c} U_1\nu d\mu_n
\quad {\rm and} \quad  {\sf (III)} \le  \frac{3e^t}{\vareps} \int_{F_\ell^c} U_1\nu d\mu.
$$
Then, by the tail estimate \eqref{eqn:tail}, the terms {\sf (I)} and {\sf (III)} can be made arbitrarily small uniformly in $n$ by taking $\ell$ 
sufficiently large.

{\sf step 2.} For the term {\sf (II)}, it follows from Remark \ref{rem:3.4} (1) and the fact that $\nu \in {\cal S}_{c,00}^1$ that
$$
{\sf (II)} \le \frac{3e^t}{\vareps} \int_{F_\ell} U_1\nu \, d|\mu_n-\mu|
\le \frac{3e^t}{\vareps} |\mu_n-\mu|(F_\ell) 
\le \frac{3e^t}{\vareps}  {\rm Cap}(F_\ell) d_{\cal D}(\mu_n, \mu). 
$$
Since $\ell$ is fixed, the right-hand side converges to $0$ as $n \to \infty$ by the assumption $d_{\cal D}(\mu_n, \mu) \to 0$.

{\sf step 3.}  Combining {\sf step 1.} and {\sf step 2.},  we have shown 
$$
\limsup_{n\to \infty} \sup_{\nu \in {\cal S}_{c,00}^1} 
{\mathbb P}_\nu\Big( \sup_{0\le s\le t} |A_s^n-A_s|>\vareps \Big)=0.
$$
Then, by extracting a suitable subsequence $\{n'_k\}$, it follows that 
\begin{align*}
\sup_{\nu \in {\cal S}_{c,00}^1} {\mathbb P}_\nu \left( \sup_{0 \le s \le t} |A^{n'_k}_s - A_s| > 2^{-k} \right) 
\le 2^{-k}.
\end{align*}
Finally, taking the sum over $k$ and applying the first Borel-Cantelli lemma together with \cite[Theorem2.2.3]{FOT11} and \cite[Theorem 2.1]{NTTU25}, we can conclude the locally uniform convergence ${\mathbb P}_x$-a.s. for q.e. $x \in E$. \qed

We have already seen that considering a topology that is weaker and simpler than the Miyadera norm \(d_\mathcal{D}\) results in the loss of completeness. However, the Miyadera norm is a topology that requires a strong convergence. Therefore, at the end of this section, we propose a different topology as follows.

In \cite{NTTU25}, the complete separable metric \(d_{\mathcal{S}_0}\) on \(\mathcal{S}_0\) is defined by \[d_{\mathcal{S}_0}(\mu, \nu):=\sqrt{\mathcal{E}_1(U_1\mu-U_1\nu, U_1\mu-U_1\nu )}\] for \(\mu, \nu \in \mathcal{S}_0\), and a homeomorphism of the Revuz correspondence restricted to \(\mathcal{S}_0\) is established in \cite{O26} by introducing the topology on the space of PCAFs from the perspective of killing of a process. In \cite{OTU25}, metrics on a class \(\mathcal{S}_0(\{F_k\})\) of smooth measures attached to a compact nest \(\{F_k\}\) are introduced. More precisely, for a compact nest \(\{F_k\}\), \(\mathcal{S}_0(\{F_k\})\) is the set of all smooth measures \(\mu\) such that \(1_{F_k}\mu \in \mathcal{S}_0\) for each \(k\), and 
\[d_{\mathcal{S}_0(\{F_k\})}(\mu, \nu) := \sum_{k=1}^\infty \frac{1}{2^k} \left(1 \wedge d_{\mathcal{S}_0}(1_{F_k}\mu, 1_{F_k}\nu) \right)\]
for \(\mu, \nu \in \mathcal{S}_0(\{F_k\}).\) By \cite[Lemma 3.1]{OTU25}, \((\mathcal{S}_0(\{F_k\}), d_{\mathcal{S}_0(\{F_k\})})\) is a complete separable metric space.

Similarly, for a compact nest \(\{F_k\}\), we set \(\mathcal{S}_{00}(\{F_k\})\) the class of all smooth measure \(\mu\) such that \(1_{F_k}\mu \in \mathcal{S}_{00}\) for each \(k\). Denote by \(\nest\) the set of all compact nest, and then, by Theorem \ref{Thm_col}, we have
\[\mathcal{S}=\bigcup_{\{F_k\} \in \nest}\mathcal{S}_0(\{F_k\}) = \bigcup_{\{F_k\} \in \nest}\mathcal{S}_{00}(\{F_k\}).\]

Since \(\mathcal{S}_{00}(\{F_k\})\) is not closed in \((\mathcal{S}_{0}(\{F_k\}), d_{\mathcal{S}_{0}(\{F_k\})})\), we define another metric \(d_{\mathcal{S}_{00}(\{F_k\})}^{\, \infty}\) on \(\mathcal{S}_{00}(\{F_k\})\) by
\[d_{\mathcal{S}_{00}(\{F_k\})}^{\, \infty} (\mu, \nu) := \sum_{k=1}^\infty \frac{1}{2^k} \left(1 \wedge \| U_1(1_{F_k}\mu)-U_1(1_{F_k}\nu) \|_\infty \right)\]
for \(\mu, \nu \in \mathcal{S}_{00}(\{F_k\}).\)

\begin{theorem}\label{completeSnest}
\((\mathcal{S}_{00}(\{F_k\}), d_{\mathcal{S}_{00}(\{F_k\})}^\infty)\) is a complete  metric space.
\end{theorem}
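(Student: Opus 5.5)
The plan is to reduce completeness of \(d_{\mathcal{S}_{00}(\{F_k\})}^{\,\infty}\) to a statement about one fixed \(k\): a sequence in \(\mathcal{S}_{00}(\{F_k\})\) whose potentials \(U_1(1_{F_k}\mu_n)\) are Cauchy in \(L^\infty\) for each \(k\) should have a limit \(\mu\) with \(1_{F_k}\mu\in\mathcal{S}_{00}\) and \(\|U_1(1_{F_k}\mu_n)-U_1(1_{F_k}\mu)\|_\infty\to 0\) for every \(k\). The metric axioms come first. Symmetry and the triangle inequality are immediate. If \(d^\infty=0\), then \(U_1(1_{F_k}\mu)=U_1(1_{F_k}\nu)\) for all \(k\), so uniqueness of potentials in \(\mathcal{S}_0\) gives \(1_{F_k}\mu=1_{F_k}\nu\). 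Letting \(k\to\infty\) and using that \(\mu,\nu\) are smooth and charge no set of capacity zero, together with the fact that \(E\setminus\bigcup F_k\) is of zero capacity on compacts, yields \(\mu=\nu\). The space is therefore metric.

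For completeness, take a Cauchy sequence \(\{\mu_n\}\) and fix \(k\). Set \(\mu_n^{(k)}:=1_{F_k}\mu_n\in\mathcal{S}_{00}\); these are supported in the compact set \(F_k\), and \(U_1\mu_n^{(k)}\) is Cauchy in \(L^\infty\).
\begin{enumerate}
\item \emph{Energy Cauchy and uniform mass bound.} By Remark \ref{rem:3.4} (1) with \(K=F_k\), \(\mu_n^{(k)}(F_k)\le \Capa(F_k)\|U_1\mu_n^{(k)}\|_\infty\), which is bounded in \(n\). The second inequality there gives
\[\mathcal{E}_1(U_1\mu_n^{(k)}-U_1\mu_m^{(k)},U_1\mu_n^{(k)}-U_1\mu_m^{(k)})\le(\mu_n(F_k)+\mu_m(F_k))\,\|U_1\mu_n^{(k)}-U_1\mu_m^{(k)}\|_\infty .\]
Hence the sequence is Cauchy in \(d_{\mathcal{S}_0}\), and so \(\{\mu_n\}\) is Cauchy in \(d_{\mathcal{S}_0(\{F_k\})}\).
\item \emph{Limit measure.} By the completeness of \((\mathcal{S}_0(\{F_k\}),d_{\mathcal{S}_0(\{F_k\})})\) from \cite[Lemma 3.1]{OTU25}, there is \(\mu\in\mathcal{S}_0(\{F_k\})\) with \(U_1\mu_n^{(k)}\to U_1(1_{F_k}\mu)\) in \(\mathcal{E}_1\) for each \(k\).
\item \emph{Limit is in \(\mathcal{S}_{00}(\{F_k\})\).} Independently, \(U_1\mu_n^{(k)}\to u_k\) in \(L^\infty\). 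Since \(\mathcal{E}_1\)-convergence implies \(L^2(m)\)-convergence, \(u_k=U_1(1_{F_k}\mu)\) \(m\)-a.e. Thus \(U_1(1_{F_k}\mu)\in L^\infty\), and \(1_{F_k}\mu(E)\le\Capa(F_k)\|U_1(1_{F_k}\mu)\|_\infty<\infty\). So \(1_{F_k}\mu\in\mathcal{S}_{00}\) and \(\mu\in\mathcal{S}_{00}(\{F_k\})\).
\item \emph{Convergence in the metric.} The identification in step 3 also gives \(\|U_1\mu_n^{(k)}-U_1(1_{F_k}\mu)\|_\infty\to0\) for each \(k\). Dominated convergence over the weighted series \(\sum_k 2^{-k}(1\wedge\cdot)\) then yields \(d^\infty(\mu_n,\mu)\to0\).
\end{enumerate}

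The main obstacle is step 2, where the limit must be a single measure \(\mu\) whose restrictions agree with the separate limits of \(1_{F_k}\mu_n\) for every \(k\). If I cannot quote \cite{OTU25} directly, the fallback is to rebuild this by hand. First, for each \(k\), use the uniform mass bound and vague compactness on \(F_k\) together with the completeness of \(\mathcal{S}_0\) under \(d_{\mathcal{S}_0}\) \cite{NTTU25} to get limits \(\nu_k\). Next, check consistency, \(1_{F_j}\nu_k=\nu_j\) for \(j\le k\). The difficulty is that restricting to \(F_j\) need not be continuous, since mass can accumulate on \(\partial F_j\). To handle this, I would test against \(\varphi\in\mathcal{F}\cap C_c\) and exploit the monotone structure \(1_{F_j}1_{F_k}\mu_n=1_{F_j}\mu_n\), possibly comparing \(1_{F_j}\nu_k\) with \(\nu_j\) through the potentials, which are ordered.
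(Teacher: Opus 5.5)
Your proposal is correct and follows essentially the paper's route: a uniform mass bound on each $F_k$ from Remark \ref{rem:3.4} (1), the Cauchy property in $d_{\mathcal{S}_0(\{F_k\})}$, completeness from \cite[Lemma 3.1]{OTU25}, and identification of $U_1(1_{F_k}\mu)$ with the uniform limit $v_k$. You make that identification $m$-a.e.\ via $L^2$ convergence, while the paper does it q.e.\ along a subsequence; either works, and your fallback paragraph is not needed. Your step 1 is in fact the sounder version: you use the signed-difference inequality of Remark \ref{rem:3.4} (1), which is exactly what the $d^\infty_{\mathcal{S}_{00}(\{F_k\})}$-Cauchy hypothesis controls, whereas the paper's displayed chain goes through $\|U_1(1_{F_k}|\mu_n-\mu_m|)\|_\infty$, and that quantity need not tend to $0$ (e.g.\ $\mu_n=\delta_{1/n}$ for one-dimensional Brownian motion).
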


\begin{proof}[Proof] 
We take a \(d_{\mathcal{S}_{00}(\{F_k\})}^\infty\)-Cauchy sequence $\{\mu_n\}_n \subset \mathcal{S}_{00}(\{F_k\})$. Then, for each \(k \in \mathbb{N}\), \(\{U_1(1_{F_k}\mu_n)\}_n \) is a \(\|\cdot\|_{\infty}\)-Cauchy sequence. Hence there exists a function $v_k$ defined quasi-everywhere on \(E\) such that
\[
\lim_{n\to \infty} \left\| U_1(1_{F_k}\mu_n)-v_k \right\|_{\infty}=0.
\] 
Moreover, {
  since $1_{F_k}\mu_n \in \mathcal{S}_{00}$, it follows from Remark \ref{rem:3.4} (1) that 
$$
\sup_n \mu_n(F_k) \le  {\rm Cap}(F_k) \| U_1(1_{F_k}\mu_n)\|_\infty.
$$
Since $\{U_1(1_{F_k}\mu_n)\}_n$ is a $\|\cdot\|_\infty$-Cauchy sequence, its supremum norm is uniformly bounded in $n$. Thus we obtain $\sup_n \mu_n(F_k)<\infty$. Furthermore, applying Remark  \ref{rem:3.4}(1) again to the total variation measure $1_{F_k}|\mu_n - \mu_m| \in \mathcal{S}_{00}$, we have
\begin{align*}
d_{\mathcal{S}_0}(1_{F_k}\mu_n, 1_{F_k}\mu_m)^2 &\le \mathcal{E}_1(U_1(1_{F_k}|\mu_n - \mu_m|), U_1(1_{F_k}|\mu_n - \mu_m|)) \\
&\le |\mu_n - \mu_m|(F_k) \| U_1(1_{F_k}|\mu_n - \mu_m|) \|_\infty \\
&\le 2 \left( \sup_l \mu_l(F_k) \right) \| U_1(1_{F_k}|\mu_n - \mu_m|) \|_\infty.
\end{align*}
Since $\sup_l \mu_l(F_k) < \infty$ and $\{\mu_n\}_n$ is a $d_{\mathcal{S}_{00}(\{F_k\})}^\infty$-Cauchy sequence, the right-hand side converges to $0$ as $n, m \to \infty$. Hence $\{1_{F_k}\mu_n\}_n$ is a $d_{\mathcal{S}_0}$-Cauchy sequence in $\mathcal{S}_0$ for each $k$, and so $\{\mu_n\}_n$ is a $d_{\mathcal{S}_0(\{F_k\})}$-Cauchy sequence. Therefore, there exists $\mu \in \mathcal{S}_0(\{F_k\})$ such that $\mu_n$ converges to $\mu$ in $d_{\mathcal{S}_0(\{F_k\})}$.

Next, we prove that $U_1(1_{F_k}\mu)=v_k$ quasi-everywhere and $\mu \in \mathcal{S}_{00}(\{F_k\})$. Since $1_{F_k}\mu_n$ converges to $1_{F_k}\mu$ in $d_{\mathcal{S}_0}$ as $n\to \infty$, their 1-potentials $U_1(1_{F_k}\mu_n)$ converge to $U_1(1_{F_k}\mu)$ with respect to the $\mathcal{E}_1$-norm. By taking a suitable subsequence $n'$, a quasi-continuous version of $U_1(1_{F_k}\mu_{n'})$ converges to a quasi-continuous version of $U_1(1_{F_k}\mu)$ quasi-everywhere (cf.\ \cite[Theorem 2.1.4]{FOT11}). Since $U_1(1_{F_k}\mu_{n'})$ also converges to $v_k$ uniformly, we have $U_1(1_{F_k}\mu) =v_k$ quasi-everywhere. Moreover, $\|U_1(1_{F_k}\mu)\|_{\infty}=\|v_k\|_{\infty}<\infty$ holds, so $\mu \in \mathcal{S}_{00}(\{F_k\})$.
}
\end{proof}

\begin{proposition}
For \(\mu_n, \mu \in \mathcal{S}_{00}(\{F_k\})\), if \(\mu_n\) converges to \(\mu\) in \(d_{\mathcal{S}_{00}(\{F_k\})}^\infty\), then \(\mu_n\) converges to \(\mu\) weakly on the nest \(\{F_k\}\), that is, for any \(f \in C(\{F_k\})\) and \(F_k\), \(\int_{F_k} f\, d\mu_n\) converges to \(\int_{F_k} f\, d\mu\).
\end{proposition}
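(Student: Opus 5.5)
The plan is to reduce the claim to a total-variation estimate on each fixed $F_k$, obtained from Remark \ref{rem:3.4} (1). I interpret $C(\{F_k\})$ as the functions $f$ on $E$ whose restriction $f|_{F_k}$ is continuous for each $k$. Since each $F_k$ is compact, such an $f$ is bounded on $F_k$, and then
\[
\Big|\int_{F_k} f\,d\mu_n-\int_{F_k} f\,d\mu\Big|\le \sup_{F_k}|f|\;|\mu_n-\mu|(F_k).
\]
So it suffices to show that $|\mu_n-\mu|(F_k)\to 0$ for each fixed $k$. This is in fact stronger than weak convergence on $F_k$: it is convergence in total variation.

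To bound $|\mu_n-\mu|(F_k)$, fix $k$ and set $\lambda_n:=1_{F_k}|\mu_n-\mu|$. This is a smooth measure dominated by $1_{F_k}\mu_n+1_{F_k}\mu\in\mathcal{S}_{00}$, so it is finite, has bounded potential, and lies in $\mathcal{S}_{00}$ by Lemma \ref{K00isS00} and Corollary \ref{Cor:FEI}. The first inequality of Remark \ref{rem:3.4} (1), applied with $K=F_k$, gives
\[
|\mu_n-\mu|(F_k)\le \Capa(F_k)\,\|U_1\lambda_n\|_\infty .
\]
It therefore remains to show that $\|U_1(1_{F_k}|\mu_n-\mu|)\|_\infty\to0$. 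Convergence in $d^\infty_{\mathcal{S}_{00}(\{F_k\})}$ only gives
\[
\|U_1(1_{F_k}\mu_n)-U_1(1_{F_k}\mu)\|_\infty\to0,
\]
which controls the potential of the signed difference, not that of its total variation.

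This gap between the signed potential and the potential of the total variation is the main obstacle. The Example following Corollary \ref{Green-tight_complete} shows that in general it cannot be closed: there the signed potentials converge uniformly while the $d_\mathcal{D}$-distances stay bounded away from zero. Since every bound above is homogeneous in $\mu_n-\mu$, I would instead avoid total variation and argue by duality using energy. By Remark \ref{rem:3.4} (1), $\sup_n\mu_n(F_k)\le \Capa(F_k)\sup_n\|U_1(1_{F_k}\mu_n)\|_\infty<\infty$. Combining this with the second inequality there, as in the proof of Theorem \ref{completeSnest}, gives
\[
d_{\mathcal{S}_0}(1_{F_k}\mu_n,1_{F_k}\mu)^2\le(\mu_n(F_k)+\mu(F_k))\,\|U_1(1_{F_k}\mu_n)-U_1(1_{F_k}\mu)\|_\infty\to0 .
\]
Then for $g\in\mathcal{F}\cap C_c$ we get
\[
\int_{F_k} g\,d\mu_n=\mathcal{E}_1(g,U_1(1_{F_k}\mu_n))\to\mathcal{E}_1(g,U_1(1_{F_k}\mu))=\int_{F_k} g\,d\mu .
\]

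Finally, for $f\in C(\{F_k\})$, I extend $f|_{F_k}$ to a function in $C_c(E)$ by Tietze's theorem. By regularity of $(\mathcal{E},\mathcal{F})$, this extension can be approximated uniformly by some $g\in\mathcal{F}\cap C_c$ with $\|f-g\|_{\infty,F_k}\le\varepsilon$. The uniform mass bound $\sup_n\mu_n(F_k)+\mu(F_k)<\infty$ then turns this into $\limsup_n|\int_{F_k}f\,d\mu_n-\int_{F_k}f\,d\mu|\le C\varepsilon$, and letting $\varepsilon\to0$ completes the argument. This route requires only the uniform mass bound and the energy convergence, and bypasses the total-variation issue entirely.
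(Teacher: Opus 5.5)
Your final argument is correct and is essentially the paper's proof. It has the same four steps: a uniform bound on $\sup_n\mu_n(F_k)+\mu(F_k)$ from the first inequality of Remark~\ref{rem:3.4}~(1); $\mathcal{E}_1$-convergence of $U_1(1_{F_k}\mu_n)$ via the second (signed) inequality there; testing against $h\in\mathcal{F}\cap C_c$; and uniform approximation of a $C_c$-extension of $f|_{F_k}$.

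The total-variation detour is rightly abandoned and can be deleted. You should justify the energy estimate directly by that signed inequality rather than ``as in the proof of Theorem~\ref{completeSnest}'', since that proof as written bounds the energy through $\|U_1(1_{F_k}|\mu_n-\mu_m|)\|_\infty$, which is exactly the quantity you correctly noted is not controlled.
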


\begin{proof}
  Assume that \(\mu_n\) converges to \(\mu\) in \(d_{\mathcal{S}_{00}(\{F_k\})}^\infty\). Then, for any $k \in \mathbb{N}$, \(U_1(1_{F_k} \mu_n)\) converges to \(U_1(1_{F_k} \mu)\) in \(\|\cdot\|_\infty\). {
    By  Remark \ref{rem:3.4} (1)}, we find that \(M_k:=\sup_n \mu_n(F_k) \vee \mu(F_k)<\infty.\)

Let $f\in C(\{F_k\})$ and $k \in \mathbb{N}$. Since $F_k$ is compact and $f$ is continuous on $F_k$, we can find a function $g\in C_c(E)$ with $f=g$ on $F_k$. By the regularity, for each $\vareps>0$, there exists $h \in \mathcal{F}\cap C_c(E)$ such that 
$\|g-h\|_\infty <\vareps/2(M_k +1)$.

Then we have
\begin{align*}
\lefteqn{
\Big| \int_{F_k} f(x) \mu_n(dx) - \int_{F_k} f(x) \mu(dx) \Big|   \ 
 =\Big| \int_{F_k} g(x) \mu_n(dx) - \int_{F_k} g(x) \mu(dx) \Big|}   \\
&  \le  \int_{F_k} \big|g(x)-h(x)\big| \mu_n(dx)
+ \Big| \int_{F_k} h(x) \mu_n(dx) - \int_{F_k} h(x) \mu(dx) \Big|  
+ \int_{F_k} \big|h(x)-g(x) \big| \mu(dx) \\
& \le 2 M_k \|g-h\|_\infty 
+ \Big| \mathcal{E}_1(h, U_1(1_{F_k}\mu_n)) -  \mathcal{E}_1(h, U_1(1_{F_k}\mu))\Big|  \\
& \le \vareps + \sqrt{\mathcal{E}_1(h,h)} \sqrt{\mathcal{E}_1(U_1(1_{F_k}\mu_n)-U_1(1_{F_k}\mu), 
U_1(1_{F_k}\mu_n)-U_1(1_{F_k}\mu))} \\
& = \vareps + \sqrt{\mathcal{E}_1(h,h)} \sqrt{ 
\int_{F_k} \hspace*{-5pt} 
\big(U_1(1_{F_k}\mu_n)-U_1(1_{F_k}\mu)\big)\mu_n(dx)-
\int_{F_k}\big(U_1(1_{F_k}\mu_n)-U_1(1_{F_k}\mu)\big)\mu(dx)} \\
& \le \vareps + \sqrt{\mathcal{E}_1(h,h)}  \sqrt{ 2M_k \big\| 
U_1(1_{F_k}\mu_n)-U_1(1_{F_k}\mu)\big\|_{\infty}}.
\end{align*}
By tending $n\to \infty$ and then $\vareps \to 0$, the last term goes to $0$.  
\end{proof}

For a compact nest $\{F_k\}$, denote by  $\mathcal{K}(\{F_k\})$ the set of all smooth measures satisfying $1_{F_k} \mu \in \mathcal{K}$ for each $k$. Then, by Lemma \ref{K00isS00}, it holds that
\[\mathcal{K}_{loc} \subset \mathcal{K}(\{F_k\}) \subset \mathcal{S}_{00}(\{F_k\}).\] 
By Theorem \ref{Thm_col}, we obtain
 \[\mathcal{S}=\bigcup_{\{F_k\} \in \nest}\mathcal{S}_0(\{F_k\}) = \bigcup_{\{F_k\} \in \nest}\mathcal{S}_{00}(\{F_k\}) =  \bigcup_{\{F_k\} \in \nest}\mathcal{K}(\{F_k\}).\]
Moreover, we get the following proposition.

\begin{proposition}
For each compact nest $\{F_k\}$, $\mathcal{K}(\{F_k\})$ is a closed subset of \((\mathcal{S}_{00}(\{F_k\}), d_{\mathcal{S}_{00}(\{F_k\})}^\infty)\).
\end{proposition}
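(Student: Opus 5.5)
We take a sequence $\{\mu_n\}_n \subset \mathcal{K}(\{F_k\})$ and $\mu \in \mathcal{S}_{00}(\{F_k\})$ with $d_{\mathcal{S}_{00}(\{F_k\})}^\infty(\mu_n,\mu)\to 0$, and show that $1_{F_k}\mu \in \mathcal{K}$ for each fixed $k$. Convergence in the metric means exactly that, for each $k$,
\[
\|U_1(1_{F_k}\mu_n)-U_1(1_{F_k}\mu)\|_\infty \to 0 .
\]
So the problem reduces to a statement about single measures in $\mathcal{S}_{00}$: if $\nu_n:=1_{F_k}\mu_n\in\mathcal{K}_{00}$ (by Lemma \ref{K00isS00}, since $\nu_n$ is a finite Kato measure), $\nu:=1_{F_k}\mu\in\mathcal{S}_{00}$, and $\|U_1\nu_n-U_1\nu\|_\infty\to0$, then $\nu\in\mathcal{K}$. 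Note that Example \ref{counterexofmetric1} shows that uniform convergence of potentials alone need not produce a potential of a measure. Here, however, the limit is already known to be $U_1\nu$ with $\nu$ smooth, so only the Kato property needs to be transferred.

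The key step is the identity used in the proofs of Lemma \ref{lemdefKato} and Proposition \ref{FellerS00isK}. For $\rho\in\mathcal{S}_{00}$ and q.e. $x$,
\[
\mathbb{E}_x\Big[\int_0^t e^{-s}\,dA^\rho_s\Big] = R_1\rho(x) - e^{-t}P_tR_1\rho(x).
\]
Since $R_1\nu_n - R_1\nu$ is a quasi-continuous version of $U_1\nu_n-U_1\nu$, Lemma \ref{potCapalemma} (or \cite[Lemma 2.1.5]{FOT11}) gives that $|R_1\nu_n-R_1\nu|\le\varepsilon_n:=\|U_1\nu_n-U_1\nu\|_\infty$ q.e. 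Because $P_t$ is a sub-Markov contraction and does not charge sets of zero capacity, subtracting the identities for $\nu_n$ and $\nu$ yields
\[
\Big|\mathbb{E}_x\Big[\int_0^t e^{-s}dA^{\nu}_s\Big]-\mathbb{E}_x\Big[\int_0^t e^{-s}dA^{\nu_n}_s\Big]\Big| \le 2\varepsilon_n \quad\text{q.e. } x,
\]
uniformly in $t>0$. Combining this with $\mathbb{E}_x[A^\rho_t]\le e^t\,\mathbb{E}_x[\int_0^te^{-s}dA^\rho_s]$, we get
\[
\|\mathbb{E}_\cdot[A^\nu_t]\|_{\infty,q}\le e^t\big(2\varepsilon_n+\|\mathbb{E}_\cdot[A^{\nu_n}_t]\|_{\infty,q}\big).
\]
Given $\varepsilon>0$, we first fix $n$ with $2\varepsilon_n<\varepsilon$. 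Letting $t\to0$ and applying Lemma \ref{lemdefKato} to $\nu_n\in\mathcal{K}$ gives $\limsup_{t\to0}\|\mathbb{E}_\cdot[A^\nu_t]\|_{\infty,q}\le\varepsilon$. Hence $\nu\in\mathcal{K}$ by Lemma \ref{lemdefKato} again, and therefore $\mu\in\mathcal{K}(\{F_k\})$.

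Alternatively, one could avoid the $t$-formulation and use the resolvent equation as in Theorem \ref{Kato_complete}: $R_\alpha\nu = R_1\nu-(\alpha-1)R_\alpha R_1\nu$, and likewise for $\nu_n$. This gives $\|R_\alpha\nu-R_\alpha\nu_n\|_\infty\le 2\|R_1\nu-R_1\nu_n\|_\infty$, since $(\alpha-1)\|R_\alpha\|\le1$. Note that this is the signed difference of potentials, not the potential of $|\nu-\nu_n|$, and linearity of the resolvent makes it legitimate. Then $\|R_\alpha\nu\|_\infty\le 2\varepsilon_n+\|R_\alpha\nu_n\|_\infty$, and letting $\alpha\to\infty$ gives the claim directly from the definition of $\mathcal{K}$. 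I would present this shorter version.

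The main obstacle is justifying the linearity step $R_\alpha(R_1\nu-R_1\nu_n)$ together with the uniform bound. This needs the potentials to be handled q.e. rather than merely $m$-a.e., so that the resolvent equation applied to quasi-continuous versions is valid. For measures in $\mathcal{S}_{00}$, both potentials are bounded and the resolvent equation $U_\alpha\rho=U_1\rho-(\alpha-1)R_\alpha U_1\rho$ from the proof of Lemma \ref{Lem:smooth-01} holds. The sup-norm bound then transfers through the contraction $\alpha R_\alpha$, and the essential versus quasi-essential norms agree by Lemma \ref{potCapalemma}.
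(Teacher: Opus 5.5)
Your proposal is correct. The version you say you would present is essentially the paper's proof: apply the resolvent equation $U_\alpha\rho=U_1\rho-(\alpha-1)R_\alpha U_1\rho$ to the signed difference $1_{F_k}\mu-1_{F_k}\mu_n$ to get the factor $2$, then let $\alpha\to\infty$ and afterwards $n\to\infty$. The only cosmetic difference is the starting point: the paper takes a Cauchy sequence in $\mathcal{K}(\{F_k\})$ and uses Theorem~\ref{completeSnest} to produce the limit, while you start from a sequence already converging in $\mathcal{S}_{00}(\{F_k\})$. Your alternative $t$-formulation via Lemma~\ref{lemdefKato} is also valid, though it is not needed.
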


\begin{proof}
We take a \(d_{\mathcal{S}_{00}(\{F_k\})}^\infty\)-Cauchy sequence $\{\mu_n\}\subset {\cal K}(\{F_k\})$, then, by Theorem \ref{completeSnest}, there exist \(\mu \in \mathcal{S}_{00}(\{F_k\})\) such that \(\mu_n\) converges to \(\mu\) in \(d_{\mathcal{S}_{00}(\{F_k\})}^\infty\). For each $k \in \mathbb{N}$, \(U_1(1_{F_k}\mu_n)\) converges to \(U_1(1_{F_k}\mu)\) in \(\|\cdot\|_\infty\). By the resolvent equation, we have
\begin{eqnarray*}
\big\| U_\alpha (1_{F_k} \mu) \big\|_{\infty} &\leq & \big\| U_\alpha (1_{F_k} \mu)-U_\alpha (1_{F_k} \mu_n) \big\|_{\infty} + \big\| U_\alpha (1_{F_k} \mu_n) \big\|_{\infty} \\
&\leq & \big\| U_1 (1_{F_k} \mu)-U_1 (1_{F_k} \mu_n)\big\|_{\infty}  \\
&&\hspace{10mm}+ \frac{\alpha-1}{\alpha}  
\big\| \alpha R_\alpha (U_1 (1_{F_k} \mu)-U_1 (1_{F_k} \mu_n))\big\|_{\infty} + \big\| U_\alpha (1_{F_k} \mu_n) \big\|_{\infty} \\
& \leq & 2 \big\| U_1 (1_{F_k} \mu)-U_1 (1_{F_k} \mu_n) \big\|_{\infty} 
+\big\| U_\alpha (1_{F_k} \mu_n) \big\|_{\infty}
\end{eqnarray*}
Letting \(\alpha\) tend to infinity and then \(n\) tend to infinity, \(\lim_{\alpha \to \infty}\| U_\alpha (1_{F_k}\mu)\|_{\infty}=0\).
\end{proof}

\appendix
\section{Basic definitions of Dirichlet form theory} \label{sec_Appndeix}
This appendix is devoted to providing the necessary background on Markov processes and Dirichlet forms used in this paper,
following the notation in \cite{FOT11, CF12}.

Let $E$ be a locally compact separable metric space and $m$ a positive Radon measure on $E$ with full topological support. We set $E_{\partial} = E \cup \{\partial\}$ by adding the cemetery point \(\partial \not \in E\) to \(E\).

Throughout this paper, denote by \((\mathcal{E}, \mathcal{F})\) a regular Dirichlet form on \(L^2(E;m)\) with inner product $(\cdot,\cdot)$. More precisely, \(\mathcal{E}\) is a non-negative definite symmetric bilinear form on a dense linear subspace \(\mathcal{F}\subset L^2(E;m)\) such that \((\mathcal{F} ,\mathcal{E}_1) \) is a Hilbert space, \(\bar{f}\, (:= 0 \vee f \wedge 1) \in \mathcal{F}\) and \(\mathcal{E}(\bar{f},\bar{f}) \leq \mathcal{E}(f,f)\) hold for any \(f\in \mathcal{F}\), and \(\mathcal{F}\cap C_c\) is dense in \(\mathcal{F}\) with respect to the \(\mathcal{E}_1\)-norm and \(C_c\) with respect to the uniform norm \(\|\cdot \|_\infty\), where \(C_c\) is the space of all continuous functions with compact support. Here, we define
\begin{align}
 \c{E}_{\alpha}(f,g) := \c{E}(f,g) + \alpha (f,g)
\end{align}
for $\alpha > 0$ and $u,v \in \mathcal{F}$, 
and the \(\mathcal{E}_1\)-norm of $f$ by \(\sqrt{\mathcal{E}_1(f,f)}\).
For the convenience, we set \(\mathcal{E}_{\alpha}(f):=\mathcal{E}_{\alpha}(f,f)\) for \(\alpha \geq 0\) and \(f\in \mathcal{F}\).

By the general theory of Dirichlet forms (cf. \cite{FOT11}), there exists an $m$-symmetric Hunt process $\mathbb{M} = (\Omega, \mathcal{F}_t,X_t,\mathbb{P}_x,\zeta)$ on $E$ associated with \((\mathcal{E}, \mathcal{F})\). More precisely, $\mathbb{M}$ is an \(E\)-valued strong Markov process on a probability space \(\Omega\) satisfying a quasi-left continuity, \(\mathbb{P}_x\) is a probability measure with starting point \(x\), $\{\mathcal{F}_t\}_{t \ge 0}$ is the minimal augmented filtration and $\zeta=\inf\{t\geq 0 : X_t=\partial\}$ is the lifetime of \(\mathbb{M}\).
Moreover, let $\{T_{t}\}_{t > 0}$ be the strongly continuous Markovian
  semigroup on $L^{2}(E;m)$ induced by the transition semigroup \(\{P_t\}_{t>0}\) defined by \(P_{t} f(x) := \mathbb{E}_x[f(X_t)]\). Then it holds that
\begin{align}
 \begin{cases}
  \ds \mathcal{E}(u,v) = \lim_{t \downarrow 0} \frac{1}{t} (u - T_t u, v), \medskip\\
  \ds \c{F} = \left\{u \in L^2(E;m) : \lim_{t \downarrow 0} \frac{1}{t}
  (u - T_t u, u) < \infty\right\}.
 \end{cases}
\end{align}

We define the resolvent \(R_\alpha\) for \(\alpha>0\) by
\[R_{\alpha}f :=\int_0^\infty e^{-\alpha t} P_tf dt \]
for \(f\in L^2(E;m)\). We define the extended Dirichlet space \(\mathcal{F}_e\) as the set of all \(m\)-measurable functions \(f\) on \(E\) possessing an approximating \(\mathcal{E}\)-Cauchy sequence \(\{f_n\}_n \in \mathbb{N} \subset \mathcal{F}\) such that \(f_n\) converges to \(f\) \(m\)-almost everywhere.

Next we describe the capacity associated with $(\c{E},\c{F})$. For an open set $A \subset E$, the capacity of \(A\) is defined as
\[ \Capa(A) := \inf\big\{\c{E}_1(u,u) : u \in \mathcal{F}\ \text{with\ }u \ge 1\ m\text{-a.e. on } A\big\}, \]
where we adopt that the infimum of the empty set is \(\infty\).  For any set $B \subset E$, the capacity of \(B\) is defined as  
\[\Capa(B):= \inf\{\Capa(A) : B\subset A \ \text{for\ an\ open\ set\ }A\}.\]

We call $A \subset E$ is an {\it exceptional} set if $\Capa(A) = 0$.  A statement depending on $x \in A$ is said to hold  {\it quasi-everywhere} ({\it q.e.})  on $A$
if there exists an exceptional set $N \subset A$ such that the statement holds for every 
$x \in A \setminus N$, and a \([-\infty, \infty]\)-valued function $u$ defined q.e. on $E$ is {\it quasi-continuous} if, for any $\varepsilon > 0$, there exists an open set $U \subset E$ such that $\Capa(U) < \varepsilon$ and $u$ is finite and continuous on $E\setminus U$. By \cite[Theorem 2.1.3]{FOT11}, every function $u \in \c{F}_e$ has a quasi-continuous version. Therefore, in the rest of this paper, we always assume that any function $u \in  \c{F}_{e}$ is quasi-continuous.

We call an increasing sequence of closed sets $\{F_k\}$ a {\it nest} if 
\(\Capa (K\setminus F_k)\) converges to \(0\) as \(k\) tend to infinity for any compact set \(K\). We remark that this nest is called a generalized nest in \cite{FOT11}. See \cite[Section 2.2]{FOT11} and \cite[Section 2.3]{CF12} for details. A positive Borel measure $\mu$ on $E$ is {\it smooth} if it charges no set of zero capacity and there exists a nest \(\{F_k\}_ k \in \mathbb{N}\) satisfying \(\mu(F_k) <\infty\) for each \(k\). Denote by $\c{S}$ the class of all smooth measures.

Next we introduce the definition of a
positive continuous additive functional (PCAF).

\begin{definition} \rm
 \label{def-PCAF}
 A \([0,\infty]\)-valued function
 $A_t(\omega),\ t \ge 0,\ \omega \in \Omega$
 is a PCAF if the following hold:
 \begin{itemize}
  \item[(A.1)] for each $t \ge 0$, $A_t(\cdot)$ is $\c{F}_t$-measurable,
  \item[(A.2)]
       there exist a set $\Lambda \in \c{F}_{\infty}$ called a defining set, and
       an exceptional set $N \subset E$ such that $\b{P}_x(\Lambda) = 1$
       for any $x \in E \setminus N$ and $\theta_t \Lambda \subset \Lambda$
       for any $t > 0$, and moreover,
       for each $\omega \in \Lambda$, ${A}_0(\omega) = 0$,
       $|A_t(\omega)| < \infty$ for any $t < \zeta(\omega)$,
       $A_t(\omega) = {A}_{\zeta(\omega)}(\omega)$ for any
       $t \ge \zeta(\omega)$ and
       \begin{align}
	{A}_{t + s}(\omega) = {A}_s(\omega) + {A}_t(\theta_s \omega),\
	\text{for any $t,s \ge 0$},
       \end{align}
  \item[(A.3)]
       for each $\omega \in \Lambda$, the map
       $t \mapsto {A}_t(\omega)$ is non-negative and continuous on
       $[0,\infty)$, where $\Lambda$ is the set defined in the above condition.
 \end{itemize}
\end{definition}
The set of all PCAFs is denoted by $\bf{A}_c^+$. Two PCAFs $A^{(1)}$ and $A^{(2)}$ are {\it equivalent} if for each $t > 0$, $\b{P}_x(A_t^{(1)} = A_t^{(2)}) = 1$ q.e. $x \in E$, and we write $A^{(1)} \sim A^{(2)}$. This relation ``$\sim$'' is an equivalence relation on $\bf{A}_c^+$.
It is known that a PCAF corresponds one-to-one to a smooth measure as follows.
\begin{theorem}[{see \cite[Theorem 5.1.3, 5.1.4]{FOT11}}]
 \label{thm-Revuz}
 The quotient space $\bf{A}_c^+/\sim$ and the family $\c{S}$ are in one-to-one correspondence under the following relation:
 For $\mu \in \c{S}$ and $A \in \bf{A}_c^+$,
 \begin{align} \label{eq-Revuz}
  \mathbb{E}_{hm} \left[ \int_0^\infty e^{-\alpha s} f(X_s) \, dA_s \right] 
  = \int_E f(x) R_\alpha h(x)\, \mu(dx) 
 \end{align}
for any non-negative valued Borel measurable functions \(f\) and \(h\). Moreover, $(\ref{eq-Revuz})$ is equivalent to 
\begin{equation}
 \mathbb{E}_{hm} \left[ \int_0^t  f(X_s) \, dA_s \right] =
   \int_{E} \left(\int_{0}^{t} P_s h(x)\, ds\right) f(x)\, \mu(dx)
\label{eq-Revuz2}
\end{equation}
 for any non-negative valued Borel measurable functions \(f\) and \(h\).
 \end{theorem}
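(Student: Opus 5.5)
The statement is the Revuz correspondence, and I would follow the standard route of \cite[Section 5.1]{FOT11}. The argument splits into four steps: the equivalence of the two identities, the map from PCAFs to measures, injectivity, and surjectivity. I expect surjectivity to be the real work.

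\textbf{Equivalence of the two identities.} For nonnegative Borel $f,h$, define
\[
\Phi(t):=\mathbb{E}_{hm}\Big[\int_0^t f(X_s)\,dA_s\Big], \qquad \Psi(t):=\int_E \Big(\int_0^t P_sh\,ds\Big) f\,d\mu .
\]
Both are nondecreasing in $t$. Integration by parts gives
\[
\mathbb{E}_{hm}\Big[\int_0^\infty e^{-\alpha s}f(X_s)\,dA_s\Big]=\alpha\int_0^\infty e^{-\alpha t}\,\Phi(t)\,dt .
\]
Fubini gives the analogous formula with $\Psi$ in place of $\Phi$, since $R_\alpha h=\alpha\int_0^\infty e^{-\alpha t}\int_0^t P_sh\,ds\,dt$. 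Hence \eqref{eq-Revuz} for all $\alpha>0$ says exactly that $\Phi$ and $\Psi$ have the same Laplace transform. Uniqueness of Laplace transforms, together with the monotonicity of $\Phi$ and $\Psi$, then gives \eqref{eq-Revuz2}, and the reverse implication is the same computation read backwards.

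\textbf{From PCAFs to measures.} For $A\in\mathbf{A}_c^+$ I would set
\[
\mu_A(f):=\lim_{t\downarrow0}\frac1t\,\mathbb{E}_m\Big[\int_0^t f(X_s)\,dA_s\Big].
\]
By symmetry of $P_t$ and the additivity of $A$, the quantity inside the limit is increasing as $t\downarrow0$, so the limit exists in $[0,\infty]$ and defines a measure. The key steps are:
\begin{itemize}
\item Checking \eqref{eq-Revuz} for this $\mu_A$, via the Markov property and a monotone class argument in $h$.
\item Showing $\mu_A$ charges no exceptional set. If $\Capa(N)=0$, the process a.s.\ does not visit $N$, so $\int 1_N(X_s)\,dA_s=0$.
\item Showing $\mu_A$ is smooth. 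Take $F_k:=\{x:\mathbb{E}_x[\int_0^\infty e^{-t}g(X_t)\,dA_t]\ge 1/k\}$ for a strictly positive $g\in L^1(m)$, use quasi-continuity of $\alpha$-excessive functions to see that $\{F_k\}$ is a nest, and bound $\mu_A(F_k)\le k\int R_1g\,d\mu_A<\infty$.
\end{itemize}

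\textbf{Injectivity.} Suppose $A$ and $B$ have the same Revuz measure. Then \eqref{eq-Revuz} with $h=$ point-mass approximations gives $\mathbb{E}_x[\int_0^\infty e^{-\alpha s}f(X_s)\,dA_s]=\mathbb{E}_x[\int_0^\infty e^{-\alpha s}f(X_s)\,dB_s]$ for q.e.\ $x$, after localizing by $f$ with bounded potentials. By the Markov property, $A_t-B_t$ (suitably localized) is then a continuous martingale of finite variation. Such a martingale is constant, so $A\sim B$.

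\textbf{Surjectivity (main obstacle).} This is the step that carries the weight of the theorem. The plan is:
\begin{itemize}
\item First take $\mu\in\mathcal{S}_{00}$. The function $U_1\mu$ is bounded, quasi-continuous and $1$-excessive. Approximate $\mu$ by $g_n m$ with $g_n:=n(U_1\mu-nR_{n+1}U_1\mu)$ and set $A^n_t:=\int_0^t g_n(X_s)\,ds$.
\item Show that $\{A^n\}$ is Cauchy uniformly on compacts in $t$, $\mathbb{P}_x$-a.s.\ for q.e.\ $x$. The tool is the energy estimate $\mathbb{E}_x\big[\sup_{t\le T}|A^n_t-A^m_t|^2\big]\lesssim$ a bound controlled by $\|U_1(g_nm)-U_1(g_mm)\|$, combined with a Borel--Cantelli argument along a subsequence as in the proof of the final proposition of Section~\ref{sec_Miyadera}. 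The limit is a PCAF with Revuz measure $\mu$. This Cauchy estimate is the delicate point.
\item For general $\mu\in\mathcal{S}$, use Theorem~\ref{Thm_col}: $\mathcal{S}=(\mathcal{S}_{00})_{col}$, so there is a compact nest $\{F_k\}$ with $1_{F_k}\mu\in\mathcal{S}_{00}$ and $\mu(E\setminus\bigcup_k F_k)=0$. Build $A^{(k)}$ for each $1_{F_k}\mu$. Injectivity makes the family consistent, that is $1_{F_j}\,dA^{(k)}=dA^{(j)}$ for $j\le k$.
\item Define $A_t:=\lim_k A^{(k)}_t$. Continuity and finiteness for $t<\zeta$ follow because $\{F_k\}$ is a nest, since the process leaves $\bigcup_k F_k$ only on an exceptional set of paths.
\end{itemize}
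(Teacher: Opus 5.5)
Your outline follows the standard route of \cite[Section 5.1]{FOT11}, which the paper cites without giving a proof. Two steps, however, do not work as written.

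\textbf{Smoothness of $\mu_A$.} Your argument here fails for three reasons.
\begin{itemize}
\item The sets $F_k=\{x:\mathbb{E}_x[\int_0^\infty e^{-t}g(X_t)\,dA_t]\ge 1/k\}$ need not form a nest: for $A\equiv 0$ they are all empty.
\item The bound $\mu_A(F_k)\le k\int R_1g\,d\mu_A$ does not follow from the definition of $F_k$.
\item $\int R_1g\,d\mu_A=\mathbb{E}_{gm}[\int_0^\infty e^{-t}\,dA_t]$ is in general infinite, because a PCAF is only required to be finite for $t<\zeta$.
\end{itemize}
For example, take one-dimensional Brownian motion and $A_t=\int_0^t e^{X_s^2}\,ds$, whose Revuz measure $e^{x^2}dx$ is Radon. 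Then this integral is $+\infty$ for every strictly positive $g$. For $g(x)=e^{-|x|}$ one even has $\mathbb{E}_x[\int_0^\infty e^{-t}g(X_t)\,dA_t]=+\infty$ for all $x$, so your sets give no information.

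The missing idea is to damp the functional by $e^{-A_t}$. For $g>0$ bounded and $m$-integrable, put $\psi(x):=\mathbb{E}_x[\int_0^\infty e^{-t-A_t}g(X_t)\,dt]$. Then $\psi>0$ q.e., and the Markov property gives
\[
\mathbb{E}_x\Big[\int_0^\infty e^{-t}\psi(X_t)\,dA_t\Big]=\mathbb{E}_x\Big[\int_0^\infty e^{-u}g(X_u)\big(1-e^{-A_u}\big)\,du\Big]\le R_1g(x).
\]
So $\psi$ is $R_1g$ minus a bounded $1$-excessive function, hence quasi-continuous. Applying \eqref{eq-Revuz} with $f=\psi$ yields $\int\psi\,R_1h\,d\mu_A\le(h,R_1g)<\infty$ for every strictly positive bounded $h\in L^1(E;m)$. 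Intersecting $\{\psi\ge 1/k\}\cap\{R_1h\ge 1/k\}$ with a nest on which $\psi$ and $R_1h$ are continuous gives a nest whose $k$-th set has $\mu_A$-mass at most $k^2(h,R_1g)$. The same function, built from $A+B$, also supplies what your injectivity step needs: a strictly positive $f$ whose $A$- and $B$-potentials are both bounded.

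\textbf{Patching in the surjectivity step.} You argue that the process a.s.\ never leaves $\bigcup_kF_k$. That does not imply that, for $t<\zeta$, the path on $[0,t]$ lies in a single $F_k$. Yet this is exactly what makes $A^{(k)}_t$ stationary in $k$, and hence the limit finite and continuous on $[0,\zeta)$. What you need is $\mathbb{P}_x(\lim_k\sigma_{E\setminus F_k}<\zeta)=0$ for q.e.\ $x$, the probabilistic characterization of nests. It is proved from $\Capa(\overline{G}\setminus F_k)\to 0$, for relatively compact open $G$, using the $1$-equilibrium potentials of $(E\setminus F_k)\cap G$ together with quasi-left continuity.

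Two smaller points:
\begin{itemize}
\item A pointwise bound on $\mathbb{E}_x[\sup_{t\le T}|A^n_t-A^l_t|^2]$ in terms of $\|u_n-u_l\|_\infty$, with $u_n:=nR_{n+1}U_1\mu$, is useless. By the resolvent equation $U_1\mu-u_n=U_{n+1}\mu$, so uniform convergence is precisely the Kato condition, and it fails for the paper's example in $\mathcal{S}_{00}\setminus\mathcal{K}$. The estimate must instead be made under $\mathbb{P}_\nu$, $\nu\in\mathcal{S}_{00}$: use Doob's inequality for the martingale part in terms of $\mathcal{E}_1(u_n-u_l,u_n-u_l)$, and a capacity bound for $\sup_t|u_n-u_l|(X_t)$.
\item The Laplace inversion in your first step needs finite transforms. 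So first take $h$ bounded and $f$ bounded with $\int f\,d\mu<\infty$, and only then remove this localization by monotone convergence.
\end{itemize}
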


The equation \eqref{eq-Revuz}  or \eqref{eq-Revuz2} is called the {\it Revuz correspondence}.

\ \\
{\bf Acknowledgement.} This work was supported by JSPS KAKENHI Grant Numbers  25K17270 (T.O.) and 25K07056 (T.U.).

\end{document}